\documentclass[letterpaper,11pt]{amsart}
\usepackage{tikz, tikz-cd}
\usepackage{pgfplots}
\usetikzlibrary{arrows}
\usepackage{subcaption} 
\usepackage[font=footnotesize,labelfont=bf]{caption}
\usepackage{blkarray}
\usepackage{enumitem}
\usepackage{hyperref}

\textwidth=16.00cm 
\textheight=22.00cm 
\topmargin=0.00cm
\oddsidemargin=0.00cm 
\evensidemargin=0.00cm 
\headheight=0cm 
\headsep=0.5cm
\textheight=630pt

\usepackage{latexsym,array,delarray,epsfig,setspace,cleveref,mathtools,amssymb,mathrsfs}
\usepackage{amsfonts,amssymb,amsmath,amsthm,pgfplots,tikz,tikz-cd,float,tkz-euclide}
\pgfplotsset{compat=1.16}



\definecolor{light}{gray}{.75}
\definecolor{med}{gray}{.5}
\definecolor{dark}{gray}{.25}



\newtheorem{theorem}{Theorem}
\numberwithin{theorem}{section}
\newtheorem{proposition}[theorem]{Proposition}

\newtheorem{corollary}[theorem]{Corollary}
\newtheorem{lemma}[theorem]{Lemma}
\newtheorem{conjecture}[theorem]{Conjecture}
\newtheorem{question}[theorem]{Question}
\theoremstyle{definition}
\newtheorem{definition}[theorem]{Definition}
\newtheorem{remark}[theorem]{Remark}
\newtheorem{example}[theorem]{Example}


\newcommand{\C}{{\mathbb C}}

\newcommand{\Q}{{\mathbb Q}}
\newcommand{\R}{{\mathbb R}}
\newcommand{\RR}{\mathcal{R}}
\newcommand{\Z}{{\mathbb Z}}
\renewcommand{\P}{{\mathbb P}}

\newcommand{\MM}{\mathcal{M}}

\newcommand{\B}{\mathbb{B}}
\newcommand{\E}{\mathcal{E}}
\newcommand{\F}{\mathcal{F}}

\newcommand{\PP}{\mathcal{P}}
\renewcommand{\O}{\mathcal{O}}
\newcommand{\V}{\mathcal{V}}

\renewcommand{\L}{\mathcal{L}}

\newcommand{\Trop}{\textup{Trop}}
\newcommand{\T}{\mathcal{T}}
\newcommand{\I}{\mathcal{I}}

\newcommand{\be}{\mathbf{e}}
\newcommand{\bee}{\tilde{\mathbf{e}}}
\newcommand{\bd}{\mathbf{d}}

\newcommand{\br}{\mathbf{r}}

\newcommand{\bt}{\mathbf{t}}
\newcommand{\ba}{\mathbf{a}}
\newcommand{\bc}{\mathbf{c}}
\newcommand{\nn}{\underline{n}}
\newcommand{\bu}{\mathbf{u}}

\newcommand{\In}{\textup{in}}

\renewcommand{\v}{\mathfrak{v}}

\renewcommand{\In}{\textup{in}}

\newcommand{\MIN}{\textup{MIN}}
\newcommand{\trop}{\textup{trop}}

\newcommand{\Proj}{\textup{Proj}}
\newcommand{\Pic}{\textup{Pic}}
\newcommand{\CL}{\textup{CL}}

\newcommand{\GF}{\textup{GF}}

\newcommand{\Sym}{\textup{Sym}}

\newcommand{\Hom}{\textup{Hom}}
\newcommand{\Eff}{\textup{Eff}}
\newcommand{\PsEff}{\textup{PsEff}}
\newcommand{\Nef}{\textup{Nef}}
\newcommand{\Bpf}{\textup{Bpf}}

\setcounter{tocdepth}{1}

\title{Positivity properties of divisors on Toric Vector Bundles}
\author{Courtney George, Christopher Manon}

\begin{document}

\maketitle

\begin{abstract}
We use presentations of the Cox rings of projectivized toric vector bundles and elements of matroid theory to compute Newton-Okounkov bodies, effective cones, and Nef cones of these spaces. As an application we analyze the Fano property, and establish Fujita's freeness and ampleness conjectures for several classes of projectivized toric vector bundles. 
\end{abstract}

\tableofcontents

\section{Introduction }

Let $N \cong \Z^d$ be a lattice with dual lattice $M = \Hom(N, \Z)$, and let $T$ denote the algebraic torus with cocharacter lattice $N$.  Let $\Sigma \subset N\otimes \Q$ be a complete, polyhedral fan, and let $X(\Sigma)$ denote the corresponding complete toric variety over $\C$. We assume throughout that $X(\Sigma)$ is a projective variety. A \emph{toric vector bundle} $\pi: \E \to X(\Sigma)$ is a vector bundle equipped with a linear action by $T$ which intertwines through $\pi$ with the $T$ action on $X(\Sigma)$.  

Toric vector bundles are classified by Kaneyama in \cite{Kaneyama}, and by Klyachko in  \cite{Klyachko}. The Klyachko data of toric vector bundle consists of a collection $F^{\rho}$ of compatible integral filtrations in the identity fiber $\pi^{-1}Id = E$ of $\E$, one for each ray $\rho \in \Sigma(1)$.  In this way, toric vector bundles admit a combinatorial classification, much like toric varieties.  In addition to the customary polyhedral data that comes with $X(\Sigma)$, we have the linear combinatorics of matroids data living in the arrangement of Klyachko's spaces.  This theme repeats for other invariants of toric vector bundles as well, for example the global section space $H^0(X(\Sigma), \E)$ can be read off the so-called \emph{parliament of polytopes} of DiRocco, Jabbusch, and Smith \cite{DJS}, which can be viewed as a labelling of a matroid by polyhedra.  

The projectivization $\P\E$ of a toric vector bundle is a smooth, projective variety with a free class group. The Cox ring $\RR(X(\Sigma))$ (see \cite{ADUH-book}) of a toric variety is well-known to be a polynomial ring in $n = |\Sigma(1)|$ variables. In contrast, the Cox rings of projectivized toric vector bundles are a much richer class of objects.  For example, by results of Gonz\'alez, Hering, Payne, and S\"u\ss, both the Cox ring of $\bar{M}_{0,n}$ and the additive invariant rings of Nagata can be realized as Cox rings of certain projectivized toric vector bundles. In particular, it is possible for $\RR(\P\E)$ to be infinitely generated. In the case that $\RR(\P\E)$ is finitely generated, $\P\E$ is called a \textit{Mori dream space} (see \cite{Hu-Keel}). 

Cox rings of toric vector bundles are shown in \cite{Kaveh-Manon-tvb} to be \emph{multi-Rees algebras} associated to a collection filtrations on a polynomial ring coming from a tropicalized linear space.  In \cite{Kaveh-Manon-tvb} algebraic and combinatorial criteria for this type of ring to finitely generated, along with an algorithm to construct a generating set, are given.  In this paper we use these descriptions of $\RR(\P\E)$, along with aspects of associated matroids, to study the convex geometry associated to toric vector bundles and their projectivizations.  

\subsection{Newton-Okounkov bodies}

Thoughout the paper we focus on projectivized toric vector bundles $\P\E$ whose Cox ring $\RR(\P\E)$ is generated in minimal degree ($\Sym$-degree $1$, see Section \ref{sec-background}), although we do examples (Example \ref{ex-Hilb2}) where higher degree generators appear. 

In Section \ref{sec-background} we recall work of Kaveh and the second author \cite{Kaveh-Manon-tvb}, where it is shown that a toric vector bundle $\E$ can be constructed from a choice of linear ideal $L \subset \C[y_1, \ldots, y_m]$ and an $n \times m$ matrix $D$ called a \emph{diagram}.  The rows of the diagram $D$ are points in the tropical variety $\Trop(L)$, and the columns $D_j$ each determine a divisor class $\bd_j \in \CL(X(\Sigma))$. The linear ideal determines a (representable) matroid $\MM(L)$ on the elements $y_1, \ldots, y_m$. 

We use tropical techniques of \cite{Kaveh-Manon-NOK} to construct global Newton-Okounkov bodies of $\P\E$, along with Newton-Okounkov bodies of complete linear series on $\P\E$. 
Before we state our main result, we introduce some terminology. The tropical variety $\Trop(L)$ can be identified with the Bergman fan of of $\MM(L)$.  A maximal face $K \subset \Trop(L)$ corresponds to a flag $F_1 \subset \cdots \subset F_r$ of flats in $\MM(L)$, we let $E_K$ be the $r \times m$ matrix with rows the indicator vectors of these flats. Finally, we let:\\

\[M = \begin{bmatrix}
     D & -I\\
     E_K & 0
     \end{bmatrix}.
\]\\

\noindent
For each divisor class $(\alpha, \beta) \in \CL(\P\E) \cong \CL(X(\Sigma))\times \Z$ we define a certain polytope $P_{\alpha, \beta} \subset \Q_{\geq 0}^{n + m}$, which we show is naturally a Cayley polytope in nice cases, see Section \ref{sec-noby}. Finally, a cone in a tropical variety is said to be a \emph{prime cone} if its associated initial ideal is a prime ideal (see \cite{Kaveh-Manon-NOK}). The following is our main theorem on Newton-Okounkov bodies of projectivized toric vector bundles. 

\begin{theorem}\label{thm-mainnoby}
Let $\E$ be a toric vector bundle with $\RR(\P\E)$ generated in $\Sym$-degree 1, and suppose that there is a maximal flag of flats $F_1 \subset \cdots \subset F_r$ in the matroid $\MM(L)$ such that the inverse image of the corresponding maximal face of the Bergman fan $\Trop(L)$ is a prime cone of $\Trop(\I)$, then:\\

\begin{enumerate}
\item $\Delta = M\circ \Q_{\geq 0}^{m + n}$ is a global Newton-Okounkov body of $\P\E$,
\item $\Delta_{\alpha, \beta} = M\circ P_{\alpha, \beta}$ is the Newton-Okounkov body of $(\alpha, \beta) \in \CL(\P\E)$.\\
\end{enumerate}

\end{theorem}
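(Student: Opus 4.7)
The plan is to realize $\RR(\P\E)$ concretely via the presentation from \cite{Kaveh-Manon-tvb} and then invoke the tropical Newton-Okounkov body machinery of \cite{Kaveh-Manon-NOK}. Under the $\Sym$-degree $1$ hypothesis, one has a presentation $\RR(\P\E) = \C[x_1,\ldots,x_n,y_1,\ldots,y_m]/\I$, where the $x_i$ are the Cox generators of $X(\Sigma)$ and the $y_j$ are Sym-degree one generators coming from the ambient polynomial ring $\C[y_1,\ldots,y_m] \supset L$. The natural $\CL(\P\E) \cong \CL(X(\Sigma)) \times \Z$-grading assigns $x_i$ the ray class $[D_i]$ and $y_j$ the class $(\bd_j, 1)$, where $\bd_j$ is the $j$-th column of $D$.

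Next I would construct the valuation. The given maximal flag of flats determines the maximal face $K \subset \Trop(L)$, and by hypothesis its inverse image $\tilde K \subset \Trop(\I)$ is a prime cone, so $\In_{\tilde K}(\I)$ is prime and the framework of \cite{Kaveh-Manon-NOK} produces a full-rank quasi-valuation $\v$ on $\RR(\P\E)$ adapted to $\tilde K$. The weights $\v(x_i)$ and $\v(y_j)$ can be read off the combinatorial data of $\tilde K$: the rows of $E_K$ encode the Bergman/matroidal part of the weight on each $y_j$, while the rows of $D$ together with the block $-I$ record how the Klyachko filtrations express each $y_j$ in terms of the $x_i$ in the associated graded algebra. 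Concatenating, one sees that $\v$ agrees on monomials with the linear map given by the matrix $M$ acting on exponent vectors in $\Z_{\geq 0}^{n+m}$.

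For (1), by \cite{Kaveh-Manon-NOK} the global Newton-Okounkov body is the closure of the convex hull of the image of $\v$ on nonzero homogeneous elements, organized by degree. Since $\{x_i, y_j\}$ generate $\RR(\P\E)$ and, by primality of $\In_{\tilde K}(\I)$, form a Khovanskii basis for $\v$, the value semigroup is $M(\Z_{\geq 0}^{n+m})$, whose closure gives $\Delta = M \circ \Q_{\geq 0}^{n+m}$. For (2), fix the degree $(\alpha,\beta)$: monomials $\prod x_i^{a_i}\prod y_j^{b_j}$ of this degree are parametrized by the lattice points of the polytope $P_{\alpha,\beta} \subset \Q_{\geq 0}^{n+m}$ cut out by the conditions $\sum b_j = \beta$ and $\sum a_i [D_i] + \sum b_j \bd_j = \alpha$ from Section \ref{sec-noby}. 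Applying $M$ to this slice recovers $\Delta_{\alpha,\beta} = M \circ P_{\alpha,\beta}$.

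The main obstacle is verifying that $\{x_i, y_j\}$ is a Khovanskii basis for $\v$ and that $M$ correctly encodes $\v$ on these generators. The Khovanskii basis property is precisely what the prime cone hypothesis buys: primality of $\In_{\tilde K}(\I)$ ensures that the associated graded algebra is the coordinate ring of an irreducible toric variety of the expected dimension, so no additional lifts to higher Sym-degree need be included. Identifying the block structure of $M$ requires tracking carefully how the relations in $\I$ (in particular the lift of $L$ encoding the Klyachko filtrations) propagate to $\v$; the $-I$ block appears because in the associated graded each $y_j$ is expressible via the $x_i$ up to a correction by its own matroidal weight. Finally, the $\Sym$-degree $1$ generation hypothesis is used to ensure that the presentation above captures all of $\RR(\P\E)$, so the polytope $P_{\alpha,\beta}$ really indexes a generating set for the graded piece.
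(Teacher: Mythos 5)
Your proposal is correct and follows essentially the same route as the paper: it tropicalizes the presentation $\RR(\P\E)=\C[\bar X,\bar Y]/\I$, uses the prime-cone hypothesis to invoke the Kaveh--Manon machinery (their Proposition 4.2) so that the generators form a Khovanskii basis and the value semigroup is $M\circ\Z_{\geq 0}^{n+m}$ with rows of $M$ given by the block $[D,-I]$ (grading/torus data) and $E_K$ (the flag of flats), and then obtains the body of a divisor class as the fiber $M\circ P_{\alpha,\beta}$ of the degree projection. This is exactly the content of Propositions \ref{prop-tropstructure}, \ref{prop-preimagecone}, and \ref{prop-NOKbodies} in the paper.
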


We describe several large families of projectivized toric vector bundles which satisfy the conditions of Theorem \ref{thm-mainnoby}. As two applications, we construct integral polytopes whose lattice points enumerate the global sections of $\E$ itself, and we construct the Newton-Okounkov bodies of any tangent bundle $\T X(\Sigma)$ of a smooth toric variety $X(\Sigma)$. 

\subsection{The Nef Cone}

Next we describe the Nef cone of $\P\E$ in terms of the flats of the \emph{initial matroids} $\MM(\In_\sigma L)$ (see Section \ref{sec-positivity}). For each facet $\sigma \in \Sigma$ there is an associated initial ideal $\In_\sigma L \subset \C[y_1, \ldots, y_m]$. We let $F$ denote a maximal non-trivial flat of the matroid $\MM(\In_\sigma L)$. For each pair $\sigma \in \Sigma$, $F \subset \MM(\In_\sigma L)$ we define a certain point $p_{\sigma, F} \in \P\E$, with an associated explicit monoid $S_{\sigma, F} \subset \CL(\P\E)$ and cone $C_{\sigma, F} \subset \CL(\P\E)\otimes \Q$. 

\begin{theorem}\label{thm-main-nef}
Let $\E$ be a toric vector bundle with $\RR(\P\E)$ generated in $\Sym$-degree 1, then

\[\Bpf(\P\E) = \bigcap_{\sigma, F} S_{\sigma, F},\]

\[\Nef(\P\E) = \bigcap_{\sigma, F} C_{\sigma, F}.\]

\end{theorem}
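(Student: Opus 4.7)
The plan is to reduce the computation of both $\Bpf(\P\E)$ and $\Nef(\P\E)$ to testing sections against a finite set of $T$-fixed points of $\P\E$, where $T$ denotes the torus acting on $\P\E$ by the lift of its action on $X(\Sigma)$. Since the base locus of a $T$-equivariant line bundle on $\P\E$ is closed and $T$-invariant, it must contain a $T$-fixed point whenever it is non-empty. Consequently, a class $(\alpha, \beta) \in \CL(\P\E)$ is basepoint-free if and only if, at each $T$-fixed point of $\P\E$, there exists a $T$-semi-invariant section of $\O(\alpha, \beta)$ that does not vanish there. The first step is to enumerate these $T$-fixed points and identify them with the pairs $(\sigma, F)$.

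The $T$-fixed points of $\P\E$ all lie above the toric fixed points $x_\sigma \in X(\Sigma)$ indexed by maximal cones $\sigma \in \Sigma$. Above $x_\sigma$, the fiber $\P(\E_{x_\sigma})$ inherits a $T$-action whose eigenline decomposition is controlled by the Klyachko filtrations at the rays of $\sigma$. Translated into the diagram-and-ideal presentation of Kaveh--Manon, the relevant combinatorial data are exactly the flats of the initial matroid $\MM(\In_\sigma L)$, and the $T$-fixed points of $\P(\E_{x_\sigma})$ correspond bijectively to the maximal non-trivial flats $F$ of $\MM(\In_\sigma L)$; these are the points $p_{\sigma, F}$ named in the statement. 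I would establish this bijection by tracing the Cox ring presentation of $\RR(\P\E)$ in $\Sym$-degree $1$ and matching its $T$-weight structure with the rank filtration on $\MM(\In_\sigma L)$.

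With the fixed points identified, I would compute the monoids $S_{\sigma, F}$ explicitly. Under the generation hypothesis, every $T$-semi-invariant section is a Cox monomial in the generators of $\RR(\P\E)$, each of which is indexed combinatorially by a ray of $\Sigma$ together with a variable $y_j$ in a designated flat. The vanishing of such a monomial at $p_{\sigma, F}$ is controlled by whether its constituent generators lie in the flat $F \subset \MM(\In_\sigma L)$ and by their weights relative to $\sigma$. Collecting the admissible bidegrees produces precisely $S_{\sigma, F}$, after which the inclusion $\Bpf(\P\E) \subseteq \bigcap_{\sigma, F} S_{\sigma, F}$ is tautological and the reverse inclusion follows from the $T$-fixed point reduction established in the first step.

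The Nef cone statement then follows from the fact that on our Mori dream space $\P\E$ the nef cone is the closed rational cone generated by basepoint-free classes: each $C_{\sigma, F}$ is, by construction, the closed rational cone over $S_{\sigma, F}$, and for the finite collection of polyhedral cones in play this closure commutes with intersection. I expect the main obstacle to be the second step: precisely identifying the $T$-fixed points of $\P(\E_{x_\sigma})$ with the maximal non-trivial flats of $\MM(\In_\sigma L)$ demands a delicate translation between the Klyachko filtration data attached to $\sigma$ and the initial matroid coming from the diagram $D$ together with $\In_\sigma L$, and verifying that the resulting list of testing points is neither incomplete (so that $\bigcap_{\sigma, F} S_{\sigma, F}$ is not too large) nor redundant (so that no $C_{\sigma, F}$ is spurious).
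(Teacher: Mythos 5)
Your overall skeleton (reduce to the fibers over the torus-fixed points of $X(\Sigma)$, then to finitely many special points in each fiber, then read off $S_{\sigma,F}$ from the degrees of the non-vanishing generators) matches the paper's, and your derivation of the Nef statement from the Bpf statement is fine. But the pivot of your argument --- the claimed bijection between the $T$-fixed points of the fiber $\P\E_\sigma$ and the maximal non-trivial flats of $\MM(\In_\sigma L)$ --- is false in general, and this is a genuine gap. The $T$-fixed locus of $\P\E_\sigma$ is the union of the projectivizations of the $T$-weight spaces coming from Klyachko's splitting at $\sigma$; whenever one of these weight spaces has dimension at least $2$ the fixed locus is positive-dimensional, hence infinite, while the points $p_{\sigma,F}$ are finite in number. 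This already happens for $L = \langle y_1+y_2+y_3\rangle$ with $D = 0$ (a bundle with $\RR(\P\E)$ generated in $\Sym$-degree $1$): every point of the fiber $\P^1$ is $T$-fixed, yet $\MM(\In_\sigma L) = \MM(L)$ has only three maximal non-trivial flats. So your Borel-type reduction only licenses testing at all $T$-fixed points, not at the $p_{\sigma,F}$ alone, and the step that cuts the infinite fixed locus down to the finite list $\{p_{\sigma,F}\}$ is missing.

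The observation that actually closes the argument --- and is how the paper proceeds --- makes no reference to fixed points inside the fiber: for an arbitrary point $p \in \P\E_\sigma$, the set $\{y_j \mid y_j(p) = 0\}$ is closed under taking linear combinations modulo $\In_\sigma L$, hence is a flat of $\MM(\In_\sigma L)$, and it is a proper flat because not all homogeneous coordinates of $p$ vanish. It is therefore contained in some maximal non-trivial flat $F$, so every generator $Y_j$ that is non-zero at $p_{\sigma,F}$ is non-zero at $p$; since $\RR(\P\E)$ is generated in $\Sym$-degree $1$ by the $x_i$ and the $Y_j$, this gives $S_p \supseteq S_{\sigma,F}$ and $C_p \supseteq C_{\sigma,F}$. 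Combined with the (correct) reduction to the fixed-point fibers, this yields both intersections. In short: the $p_{\sigma,F}$ are not characterized as the fixed points of the fiber but as the points where a maximal set of coordinates vanishes, and it is the flat structure of the vanishing loci, not $T$-fixedness, that makes the finite list suffice.
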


We make use Theorem \ref{thm-main-nef} by imposing conditions on $\MM(L)$ and the $\MM(\In_\sigma L)$ which simplify the structure of $S_{\sigma, F}$ and $C_{\sigma, F}$.  The next corollary summarizes results we obtain for various classes of toric vector bundles. 

\begin{corollary}\label{cor-positivity}
    Let $\E$ be a toric vector bundle with $\RR(\P\E)$ generated in $\Sym$-degree 1, then:\\

    \begin{enumerate}
        \item If $\E$ is monomial, then any Nef class is basepoint-free,
        \item If $\E$ is uniform or sparse over $\P^{\nn}$, then any Nef class is basepoint-free, and any ample class is very ample. \\
    \end{enumerate}
    
\end{corollary}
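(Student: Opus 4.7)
The plan is to deduce both statements from Theorem \ref{thm-main-nef} by showing that, under each combinatorial hypothesis, every monoid $S_{\sigma, F}$ coincides with the lattice points of its cone $C_{\sigma, F}$. This forces the intersection $\bigcap_{\sigma, F} S_{\sigma, F}$ to exhaust all lattice points of $\bigcap_{\sigma, F} C_{\sigma, F}$, so that $\Bpf(\P\E) = \Nef(\P\E) \cap \CL(\P\E)$. Thus everything reduces to a saturation check for each combinatorial family.

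For part (1), in the monomial case each initial ideal $\In_\sigma L$ is generated by monomials, so every matroid $\MM(\In_\sigma L)$ decomposes as a direct sum of loops and coloops. The maximal non-trivial flats of such a matroid are precisely the complements of single coloops; tracing through the construction of $S_{\sigma, F}$, each such flat contributes a single linear inequality on $\CL(\P\E)$ with a primitive integral normal vector, so the resulting intersection is saturated by inspection.

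For part (2), the base $\P^{\nn} = \P^{n_1} \times \cdots \times \P^{n_k}$ has facets $\sigma$ in bijection with tuples of rays, one from each factor. In the uniform case $\MM(L) = U_{r, m}$, whose flats are the subsets of cardinality at most $r-1$ together with the ground set; the product structure of $\Sigma$ combines with this to give a short explicit list of half-space inequalities that match those cutting out $C_{\sigma, F}$ term-by-term. The sparse case is handled by reducing to a direct-sum-of-uniforms structure on the initial matroids $\MM(\In_\sigma L)$ and repeating the argument. For the very ampleness claim, the next step is to leverage the $\Sym$-degree-$1$ generation of $\RR(\P\E)$: an ample class $D$ determines a complete linear series $|D|$ whose sections form a graded piece of $\RR(\P\E)$, and the multi-Rees structure recalled in Section \ref{sec-background} produces an embedding into a sufficiently large projective space. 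Separation of points and tangent vectors can be checked locally at each distinguished torus-fixed point $p_{\sigma, F}$ using the explicit presentation of $\P\E$ over $\P^{\nn}$.

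The main obstacle is expected to be the saturation analysis in the sparse case, where the flats of $\MM(\In_\sigma L)$ need not inherit a uniform structure and one must argue directly about the lattice of flats to ensure primitivity of the defining normals of $S_{\sigma, F}$. A secondary difficulty is the very ampleness step, since basepoint-freeness of an ample class does not imply very ampleness in general; to upgrade one to the other, the argument must genuinely exploit both the restricted shape of the base $\P^{\nn}$ and the $\Sym$-degree-$1$ generation hypothesis, most likely via a projective normality argument for the explicit multi-Rees embedding.
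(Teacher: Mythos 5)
Your part (1) is essentially the paper's argument (Proposition \ref{prop-monomial}): when each $\In_\sigma L$ is monomial the maximal flats are complements of single elements, each $S_{\sigma, F}$ is freely generated, hence saturated, and an intersection of saturated monoids is saturated. One caveat: your phrasing that each flat ``contributes a single linear inequality with a primitive normal'' is not literally what happens --- $S_{\sigma,F}$ is the monoid generated by $(-\be_i,0)$, $i \notin \sigma(1)$, together with one class $(\bd_j,1)$, and the relevant point is free generation of that monoid, not a half-space description --- but the conclusion you draw from it is the right one.

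Part (2) has two genuine gaps. First, your global strategy --- show that \emph{every} $S_{\sigma, F}$ equals the lattice points of $C_{\sigma,F}$ and conclude saturation of the intersection --- fails over a product of projective spaces. Take $\P^1 \times \P^1$ and a maximal flat $F$ whose complement contains two elements $y_{j_1}, y_{j_2}$ with $\bd_{j_1} = (2,0)$ and $\bd_{j_2} = (0,2)$. The monoid generated by $(-1,0,0)$, $(0,-1,0)$, $(2,0,1)$, $(0,2,1)$ does not contain $(1,1,1)$, which lies in its cone; so this $S_{\sigma,F}$ is not saturated, and no ``term-by-term'' matching of inequalities can make it so. The paper avoids this entirely: by Lemma \ref{lem-point}, for sparse or uniform bundles every $y_j$ with $D_j \neq 0$ is a coloop of some $\MM(\In_\sigma L)$, so among the $S_{\sigma,F}$ one finds, for each $j$, the monoid $(-\Bpf(\P^{\nn}),0) + \Z_{\geq 0}(\bd_j,1)$; every other $S_{\sigma,F}$ contains one of these, so the full intersection collapses to $\bigcap_{j}\bigl[(-\Bpf(\P^{\nn}),0) + \Z_{\geq 0}(\bd_j,1)\bigr] = \Bpf(\P\V_D)$, the basepoint-free monoid of the ambient smooth split toric bundle, which is saturated. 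Saturation is a property of the intersection, not of the individual monoids.

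Second, your plan for very ampleness --- checking separation of points and tangent vectors ``locally at each $p_{\sigma,F}$'' --- does not work: the reduction to torus-fixed points (Proposition \ref{prop-Tfixedsuffices}) is valid for non-vanishing of sections, i.e.\ for basepoint-freeness, but separating two arbitrary points of $\P\E$ cannot be tested at finitely many fixed points. The paper's route is the identity $\Bpf(\P\E) = \Bpf(\P\V_D)$ (and hence $\Nef(\P\E) = \Nef(\P\V_D)$) just established: an ample class on $\P\E$ is then the restriction of an ample class on the smooth toric variety $\P\V_D$, which is very ample there, and very ampleness restricts to the closed subvariety $\P\E \subset \P\V_D$. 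Your instinct toward projective normality of a toric embedding is pointing at the right object, but without the ``neat and tidy'' identification of the positivity data of $\P\E$ with that of $\P\V_D$ the argument does not close.
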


For the definitions of monomial bundles, sparse bundles, and uniform bundles see Section \ref{sec-background}.  The class of sparse bundles includes all rank $2$ vector bundles and tangent bundles of smooth toric varieties.  Uniform bundles are those bundles given by $(L, D)$ where $L$ is a very general linear ideal.  

Corollary \ref{cor-positivity} implies that several large classes of bundles that we study satisfy one of Fujita's freeness and ampleness conjectures: 

\begin{conjecture}\label{conj-Fujita}
Let $X$ be smooth of dimension $n$ and let $A \in \CL(X)$ be ample, then\\

\begin{enumerate}
    \item for $m \geq n + 1$, $K_X + m A$ is globally generated,
    \item for $m \geq n + 2$, $K_X + m A$ is very ample.\\
\end{enumerate}
\end{conjecture}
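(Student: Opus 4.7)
The plan is to derive Fujita's freeness and very-ampleness conjectures for the projectivized toric vector bundles $\P\E$ covered by Corollary \ref{cor-positivity} by reducing them, via that corollary, to a numerical check on the Mori cone of $\P\E$. Fix such a bundle, set $n = \dim \P\E$, and let $A \in \CL(\P\E)$ be an ample divisor class. The main step is to show that $K_{\P\E} + mA$ is Nef for $m \geq n+1$ and ample for $m \geq n+2$. Since $\RR(\P\E)$ is finitely generated in $\Sym$-degree $1$, $\P\E$ is a Mori dream space, so its Mori cone is rational polyhedral with finitely many extremal rays. For each extremal class $[C]$ either $K_{\P\E} \cdot C \geq 0$, whence $(K_{\P\E} + mA) \cdot C \geq 0$ as $A$ is ample, or $K_{\P\E} \cdot C < 0$, in which case Mori's cone theorem (applicable because $\P\E$ is smooth and projective) gives $-K_{\P\E} \cdot C \leq n + 1$, while integrality and ampleness of $A$ force $A \cdot C \geq 1$; hence $(K_{\P\E} + mA) \cdot C \geq -(n+1) + m \geq 0$. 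The same computation with strict inequality shows $K_{\P\E} + mA$ is ample by Kleiman's criterion whenever $m \geq n+2$.

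With nefness and ampleness of $K_{\P\E} + mA$ established in the appropriate ranges, the conjecture follows from Corollary \ref{cor-positivity}: its first part converts nefness of $K_{\P\E} + mA$ into basepoint-freeness, giving part (1) for $m \geq n+1$, and its second part, in the uniform and sparse cases, converts ampleness into very ampleness, giving part (2) for $m \geq n+2$.

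The main obstacle is not this formal reduction but the combinatorial work behind Corollary \ref{cor-positivity}, which in turn rests on Theorem \ref{thm-main-nef}: the monoids $S_{\sigma, F}$ and cones $C_{\sigma, F}$ must be understood well enough for monomial, uniform, and sparse bundles that the lattice points of a Nef class lift to enough global sections to realize a morphism to projective space, and, for the very-ample statement, to sections separating points and tangent vectors. Once those structural results are in hand, Fujita's conjecture for the $\P\E$ in Corollary \ref{cor-positivity} becomes the short exercise in Mori theory sketched above.
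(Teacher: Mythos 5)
Your reduction is exactly the one the paper itself uses: it invokes Mori's cone theorem (via Lazarsfeld, Theorem 1.5.33 and Section 10.4) to get that $K_{\P\E}+mA$ is Nef for $m\geq n+1$ and ample for $m\geq n+2$, and then observes that the content of Fujita's conjectures for these bundles is precisely that Nef implies basepoint-free and ample implies very ample, which is supplied by Corollary \ref{cor-positivity}. Your proposal is correct and takes essentially the same approach, including correctly locating the real work in the combinatorial analysis of the monoids $S_{\sigma,F}$ behind Theorem \ref{thm-main-nef}.
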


\noindent

Mori's cone theorem (\cite[Theorem 1.5.33]{Lazarsfeld}) shows that if $X$ and $A$ are as in \ref{conj-Fujita}, then $K_X + mA$ is Nef if $m \geq n + 1$ and ample if $m \geq n + 2$ (see \cite[Section 10.4]{Lazarsfeld}). Therefore, to prove Fujita's freeness and ampleness conjectures it is sufficient to show that Nef implies globally generated and ample implies very ample on $\P\E$. Smooth toric varieties can be shown to satisfy Fujita's conjectures along these lines. Our strategy for establishing these conjectures for various projectivized toric vector bundles also follows this argument; however, we give an example of a rank $2$ projectivized toric vector bundle over a surface where Nef does not imply basepoint-free.  

Fujita's conjectures are true for smooth toric varieties, and the freeness conjecture has been shown for all toric varieties by Fujino \cite{Fujino}. Payne \cite{Payne-Ampleness} shows the ampleness conjecture for toric varieties with Gorenstein singularities.  For curves, both conjectures follow from Riemann-Roch, and both conjectures hold for surfaces by a result of Reider \cite{Reider}. The freeness conjecture has also been proved for smooth projective varieties up to dimension $5$ \cite{Ein-Lazarsfeld,Kawamata,Ye-Zhu}.  In the toric vector bundle case, if the rank of $\E$ is $2$, then $\P\E$ is known to be a complexity-$1$ $T$-variety and satisfies Fujita's freeness conjecture by a result of Altmann and Ilten \cite{AltmannIlten}. In that same paper, Altmann and Ilten also suggest that the Fujita conjectures could be studied for projectivized toric vector bundles. We also mention here the work of Fahrner \cite{Fahrner}, on algorithms to test if Mori dream spaces satisfy Fujita's freeness conjecture. Our treatment of $\Bpf(\P\E)$ is very much in line with Fahrner's work. 

\subsection{Fano bundles, non-Fano bundles }

In many of the cases we consider the Cox ring $\RR(\P\E)$ is presented as a complete intersection, so we are able to explicitly construct the anticanonical class $-K_{\P\E}$, along with the Nef cone via Theorem \ref{thm-main-nef}.  We focus on a large class of toric vector bundles, the \emph{Kaneyama bundles} in Section \ref{sec-Fano}, and show that these bundles are almost never Fano. 

\begin{theorem}\label{thm-mainFano}
    Let $\E$ be a Kaneyama toric vector bundle over $X(\Sigma)$ with diagram the diagonal matrix with entries $a_1 \leq \cdots \leq a_n$, then:\\ 

    \begin{enumerate}
        \item If $-K_{\P\E}$ is Nef, then $X(\Sigma) = \P^{n_1}\times \cdots \times \P^{n_\ell}$. Moreover, if $\ell > 1$ then $-K_{\P\E}$ is Nef if and only if $a_i = 1$ or $0$ for all $i$. If $\ell =1$ then $-K_{\P\E}$ is Nef if and only if $\sum_{i=2}^n (a_i - a_1) \leq (n - a_1)$,
        \item $-K_{\P\E}$ is ample if and only if $X(\Sigma) = \P^{n-1}$ and $\sum_{i=2}^n (a_i - a_1) < (n - a_1)$.\\
        
    \end{enumerate}
\end{theorem}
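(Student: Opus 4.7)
The plan is to proceed in three stages: first derive an explicit expression for $-K_{\P\E}$, then apply Theorem \ref{thm-main-nef} to extract the Nef and ample inequalities, and finally unpack what these inequalities say about $\Sigma$ and the $a_i$'s.

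For the first stage I would exploit the complete intersection presentation of $\RR(\P\E)$ that Section \ref{sec-Fano} furnishes for Kaneyama bundles. Writing $-K_{\P\E}$ as the sum of the generator degrees minus the sum of the relation degrees expresses the anticanonical class in $\CL(X(\Sigma)) \times \Z$ as an explicit linear function of the $a_i$'s, the toric boundary divisors of $X(\Sigma)$, and the data of the diagram. Its linearity in the $a_i$'s is what drives the rest of the analysis.

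For the second stage I plug this class into the constraints of Theorem \ref{thm-main-nef}. The key simplification for Kaneyama bundles is that the linear ideal $L$ and each initial ideal $\In_\sigma L$ admit transparent matroid-theoretic descriptions, so the maximal non-trivial flats of $\MM(\In_\sigma L)$ can be enumerated in closed form. Each pair $(\sigma, F)$ then contributes a single linear inequality on the coefficients of $-K_{\P\E}$, and these inequalities translate into arithmetic conditions relating the $a_i$'s to the primitive ray generators of $\sigma$.

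The main obstacle, and the heart of part (1), is showing that Nefness of $-K_{\P\E}$ forces $X(\Sigma)$ to split as a product of projective spaces. The strategy is to argue that the collective strength of the inequalities from Theorem \ref{thm-main-nef}, applied simultaneously at every facet $\sigma$ and every maximal flat $F$, is so restrictive that the primitive generators of $\Sigma(1)$ must decompose into $\ell$ disjoint groups spanning complementary sublattices, with each group corresponding to the rays of a $\P^{n_i}$. Once this decomposition is in place the case split is clean: when $\ell > 1$ the cross-factor inequalities force every $a_i \in \{0,1\}$ by elementary arithmetic, while when $\ell = 1$ only the single inequality $\sum_{i=2}^{n}(a_i - a_1) \leq n - a_1$ survives. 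Part (2) then follows by passing from closed to open cones: strict inequality in the $\ell = 1$ case yields ampleness, and when $\ell > 1$ the Nef condition already pins $-K_{\P\E}$ to the boundary of $\Nef(\P\E)$ (the facet inequality witnessing each $a_i \in \{0,1\}$ is tight), so ampleness fails outright.
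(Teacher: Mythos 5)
Your overall architecture matches the paper's: compute $-K_{\P\E}$ from the complete-intersection (sparse) presentation of $\RR(\P\E)$, describe $\Nef(\P\E)$ via the monoids $S_{\sigma,F}$ of Theorem \ref{thm-main-nef} (which for a Kaneyama bundle reduce to the $S_{\sigma,i}$ because each $\MM(\In_\sigma L)$ is a single basis plus loops, so the maximal flats are complements of single coordinates), and then split into the cases $\ell>1$ and $\ell=1$. The case analysis at the end and the passage from closed to open cones for ampleness are also in line with what the paper does.

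However, the heart of part (1) — that Nefness of $-K_{\P\E}$ forces $X(\Sigma)$ to be a product of projective spaces — is asserted rather than argued. You write that the inequalities are ``so restrictive'' that the rays must decompose into groups spanning complementary sublattices, but you give no mechanism, and this is precisely where the real work lies. The paper's route has two concrete ingredients you would need to supply. First, membership of $-K_{\P\E}$ in each $S_{\sigma,i}$ produces, for each maximal face $\sigma$, a matrix $X_\sigma$ with diagonal entries $(1-r)d_i-1$ and off-diagonal entries $d_j-1$, and every ray $\rho_k\notin\sigma(1)$, written in the basis $\{\rho_i\}_{i\in\sigma(1)}$, must satisfy $X_\sigma\bt\geq 0$; the paper's Proposition \ref{prop-nonnegative} shows (by rescaling columns and a sign argument on extremal rays) that any such solution lies in the \emph{negative} orthant, i.e.\ $\rho_k\in-\sigma$. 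Second, one needs the purely combinatorial Lemma \ref{lem-isproduct}: a smooth complete fan in which every ray outside a maximal cone lies in the negative of that cone is the fan of a product of projective spaces (proved via a determinant/unimodularity computation). Without these two steps your deduction of the product structure, and hence the entire dichotomy on $\ell$, does not go through; everything downstream of it is fine but rests on this unproven claim.
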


Curiously, the class of Kaneyama bundles includes the tangent bundles of smooth toric varieties.  As a corollary to Theorem \ref{thm-mainFano} we show that the only smooth toric variety with an ample tangent bundle is projective space (Remark \ref{rem-Mori}).  This is a toric special case of a theorem due to Mori in the general case \cite{Mori}, proving a conjecture of Hartshorne \cite{Hartshorne-ample}.  See \cite{Wu} for a different combinatorial proof that the only smooth toric variety with ample tangent bundle is projective space. We also show that the only smooth toric variety with a Nef tangent bundle is a product of projective spaces. 

\bigskip

\noindent{\bf Acknowledgements:} We thank Ian Cavey for many useful discussions about Examples \ref{ex-Hilb2} and \ref{ex-Hilb2again}. We thank Kiumars Kaveh for many useful discussions about toric vector bundle. 



\section{Background on Toric Vector Bundles}\label{sec-background}

In this section we recall the classification of toric vector bundles in \cite{Kaveh-Manon-tvb}, as well as the description of the Cox rings of projectivized toric vector bundles.  For background on toric varieties we direct the reader to the book of Cox, Little, and Schenck \cite{CLS}.  

Let $\Sigma$ be a rational polyhedral fan in $N\otimes \Q$, and let $X(\Sigma)$ denote the corresponding normal toric variety over $\C$.  We always assume that $\Sigma$ is a smooth fan.  We let $\Sigma(k)$ be the set of faces of dimension $k$ in $\Sigma$.  The most important of these sets for our purposes is the set of rays $\Sigma(1)$. We let $n = |\Sigma(1)|$. For a maximal face $\sigma \in \Sigma(d)$, $q_\sigma \in X(\Sigma)$ denotes the corresponding $T$-fixed point of $X(\Sigma)$. 

Let $\Pic_{T}(X(\Sigma))$ denote the $T$-linearized line bundles on $X(\Sigma)$.  It is well-known that an element of $\Pic_{T}(X(\Sigma))$ corresponds to an integral piecewise-linear function $\psi: |\Sigma| \to \R$, and in turn that $\psi$ is determined by its values $(r_1, \ldots, r_n)$ on the ray generators $u_1, \ldots u_n$ of the fan $\Sigma$.  Accordingly, $\Pic_{T}(X(\Sigma))$ is isomorphic to $\Z^n$. Recall that two elements $(r_1, \ldots, r_n), (s_1, \ldots, s_n)$ are $T$-linearizations of the same underlying line bundle if and only if their difference $(r_1-s_1, \ldots, r_n - s_n)$ is of the form $(\langle u_1, m\rangle, \ldots, \langle u_n, m\rangle)$ for some character $m \in M$.  This leads to the exact sequence:

\[0 \to M \to \Z^n \to \Pic(X(\Sigma)) \to 0.\]\\

\noindent
The toric variety $X(\Sigma)$ is smooth, so we also have that $\CL(X(\Sigma)) \cong \Pic(X(\Sigma))$. Under this isomorphism, the $T$-linearized line bundle $\O(\br)$ corresponds to the $T$-divisor class $\sum r_i \be_i$, where $\be_i \in \CL(X(\Sigma))$ is the divisor class of the prime divisor corresponding to the ray $\rho_i \in \Sigma(1)$. 

\subsection{A tropical characterization of toric vector bundles}

The construction in \cite[Section 4]{Kaveh-Manon-tvb} associates a toric vector bundle $\E$ of rank $r$ over $X(\Sigma)$ to a linear ideal $L \subset \C[y_1, \ldots, y_m]$ and an $n \times m$ integral matrix $D$.  The matrix $D$ is called the \emph{diagram} of $\E$. We let $D_j$ denote the $j$-th column of $D$.  The rows of $D$ are in bijection with the rays of $\Sigma$; we let $w_i$ denote the row associated to $\rho_i \in \Sigma(1)$. These data must satisfy compatibility conditions:\\

\begin{enumerate}
    \item The polynomial ring $\C[y_1, \ldots, y_m]/L$ has dimension $r$,
    \item Each $w_i$ is a point in the tropicalized linear space $\Trop(L) \subset \Q^m$,
    \item For any face $\sigma \in \Sigma$ the rows $w_i$ for $\rho_i \in \sigma(1)$ must all belong to a common \emph{apartment} $A_\B \subset \Trop(L)$.\\     
\end{enumerate}

Apartments $A_\B \subset \Trop(L)$ are distinguished polyhedral subcomplexes of $\Trop(L)$.  In what follows let $\MM(L)$ denote the matroid defined by $L$ on the set $\{y_1, \ldots, y_m\}$.  In particular, a subset $I \subset \{y_1, \ldots, y_m\}$ is declared independent when no elements of $L$ are supported on $I$. There is an apartment $A_\B$ for each basis $\B \subset \MM(L)$, in particular, $A_\B$ is the set of $(v_1, \ldots, v_m) \in \Trop(L)$, where $v_j$ is equal to the minimum of the $v_k$ where $y_k$ appears in the $\B$-expression for $y_j$. It is straightforward to show that the apartments cover $\Trop(L)$ and that each $A_\B$ is piecewise-linear isomorphic to $\Q^r$. 

Let $\GF(L)$ denote the \emph{Gr\"obner fan} of $L$, see \cite{Sturmfels,Cox-Little-OShea}. As $L$ is linear, the fan $\GF(L)$ coincides with the inner normal fan of the \emph{matroid polytope} of the matroid $\MM(L)$. In particular, $\GF(L)$ is a complete fan whose maximal faces correspond to bases in $\MM(L)$.  The tropical variety $\Trop(L)$ coincides with the \emph{Bergman fan} of $\MM(L)$, and is naturally identified with the support of a subfan of $\GF(L)$. The apartment $A_\B \subset \Trop(L)$ is precisely the intersection of $\Trop(L)$ with the maximal face of $\GF(L)$ associated to $\B$.  

Let $\sigma \in \Sigma(d)$ be a maximal face, and let $\In_\sigma(L)$ be the initial ideal corresponding to the minimal face of the Gr\"obner fan of $L$ containing the rows of $D$ which come from $\sigma(1)$. We let $\P\E_\sigma$ denote the fiber of $\E$ over the $T$-fixed point $q_\sigma$. We have $\P\E_\sigma = \Proj(\C[y_1, \ldots, y_m]/\In_\sigma(L))$.  In particular, any standard basis $y_{i_1}, \ldots, y_{i_r}$ of $\In_\sigma(L)$ forms a coordinate system for $\P\E_\sigma$.

The set of diagrams $D \in \Q^{m \times n}$ which satisfy $(2)$ and $(3)$ above is denoted $\Delta(\Sigma, L)$. Observe that $\Delta(\Sigma, L)$ is a rational polyhedral fan. By \cite[Theorem 1.4]{Kaveh-Manon-tvb}, any integral point $D \in \Delta(\Sigma, L)$ corresponds to a toric vector bundle over $X(\Sigma)$, and moreover any toric vector bundle can be obtained from some pair $(L, D)$.  However, a fixed toric vector bundle can be realized by many distinct $(L, D)$. 

Given $\br = (r_1, \ldots, r_n) \in \Q^n$ and $D \in \Delta(\Sigma,L)$ we obtain a new point $D\hat\otimes \br \in \Delta(\Sigma, L)$ by adding $r_i$ to all entries in $w_i$. This operation defines an action of $\Q^n$ on $\Delta(\Sigma, L)$.  When $\br$ is integral there is a corresponding $T$-linearized line bundle $\O(\br)$ on $X(\Sigma)$, and the diagram $D\hat\otimes \br$ corresponds to the bundle $\E \otimes \O(\br)$. Tensoring with a line bundle does not change the projectivization of a vector bundle. For this reason, when we are dealing with a projectivized toric vector bundle, we can assume without loss of generality that all entries of $D$ are non-negative, with at least one $0$ entry in each row. Accordingly, we call such a diagram $D$ \emph{non-negative}, and we refer to the \emph{non-negative form} of a diagram $D$.  Equivalently, we can work with the fan $\Delta(\Sigma, L)/\Q^n \subset \Q^{n\times m}/\Q^n$.  It is straightforward to show that if $L \subseteq L'$ then $\Delta(\Sigma, L') \subseteq \Delta(\Sigma, L)$. 

\begin{lemma}\label{lem-bundlemap}
Let $L \subseteq L' \subseteq \C[y_1, \ldots, y_m]$ be linear ideals, and suppose that $D \in \Delta(\Sigma, L') \subset \Delta(\Sigma, L)$, and let $\E$, $\E'$ denote the the toric vector bundles on $X(\Sigma)$ corresponding to $L, L'$ respectively, then there is a surjection of toric vector bundles $\E \to \E'$.
\end{lemma}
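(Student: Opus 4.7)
The plan is to construct the morphism $\E \to \E'$ via Klyachko data and check it is surjective. From \cite{Kaveh-Manon-tvb}, the Klyachko data of the toric vector bundle associated to $(L, D)$ consists of the vector space $E := \C^m/L_1$ (where $L_1$ is the space of linear forms in $L$, regarded as a subspace of $\C\{y_1, \ldots, y_m\}$) together with filtrations $F^{\rho_i}$, one for each ray $\rho_i \in \Sigma(1)$, read off tropically from the row $w_i$ of $D$; this is legal because $w_i \in \Trop(L)$. For $(L', D)$, the same recipe produces $E' := \C^m/L'_1$ and filtrations $F'^{\rho_i}$ coming from the \emph{same} rows $w_i$, the hypothesis $D \in \Delta(\Sigma, L')$ guaranteeing $w_i \in \Trop(L')$.

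First I would observe the canonical $T$-equivariant linear surjection
\[
q : E = \C^m/L_1 \twoheadrightarrow \C^m/L'_1 = E'
\]
induced by $L_1 \subseteq L'_1$.

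Next I would check that $q$ is filtration-compatible, i.e., $q(F^{\rho_i}(k)) \subseteq F'^{\rho_i}(k)$ for every $\rho_i$ and every $k \in \Z$. Both filtrations arise from the same row $w_i$ and the same ambient coordinates $y_1, \ldots, y_m$: $F^{\rho_i}(k)$ and $F'^{\rho_i}(k)$ are the images in $E$ and $E'$ of one and the same subspace $V_i(k) \subseteq \C^m$ depending only on $w_i$ and $k$. Hence $q$ carries $F^{\rho_i}(k)$ into $F'^{\rho_i}(k)$ tautologically. Klyachko's classification \cite{Klyachko} then produces the desired $T$-equivariant morphism $\E \to \E'$ of toric vector bundles; surjectivity is inherited from the surjectivity of $q$ at the identity fiber and propagates by equivariance.

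The main obstacle I anticipate is a clean formulation of the tropical/apartment recipe used to extract the filtrations in \cite{Kaveh-Manon-tvb}. For each maximal cone $\sigma \in \Sigma$, the rays of $\sigma$ must be accommodated inside a common apartment of $\Trop(L)$, and a coherent apartment choice in $\Trop(L')$ is required. The hypothesis $D \in \Delta(\Sigma, L') \subseteq \Delta(\Sigma, L)$ is precisely what supplies compatible apartments for $L$ and $L'$ simultaneously. Once this is spelled out, the subspaces $V_i(k)$ described above coincide for $L$ and $L'$, and the filtration compatibility — hence the lemma — follows at once.
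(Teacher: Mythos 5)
Your construction of the morphism is sound and essentially matches the paper's (which simply cites the construction in \cite[Section 3]{Kaveh-Manon-tvb}): the quotient $q: \C^m/L_1 \to \C^m/L_1'$ is filtration-compatible because, in $\Sym$-degree $1$, both $F^{\rho_i}(k)$ and $F'^{\rho_i}(k)$ are the images of the same coordinate subspace $\mathrm{span}\{e_j \mid D_{ij}\geq k\}$, so Klyachko's classification yields a $T$-equivariant map $\E\to\E'$.

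The gap is in the last step. Surjectivity of a vector bundle map must hold fiberwise at \emph{every} point, and your argument --- $q$ is surjective on the identity fiber, hence everywhere ``by equivariance'' --- only gives surjectivity over the open torus orbit. The degeneracy locus is a closed $T$-invariant subset that can be nonempty and contained in the toric boundary. A filtration-compatible map which is an isomorphism on the identity fiber can still fail to surject on boundary fibers: on $\P^1$, take $E=E'=\C$ with $q=\mathrm{id}$, $F^{\rho}(k)=\C$ for $k\leq 0$ on both rays, and $F'^{\rho_1}(k)=\C$ for $k\leq 1$; this is filtration-compatible and produces the inclusion $\O\hookrightarrow\O(1)$, which drops rank at a fixed point. (This example changes $D$ rather than $L$, so it is not a counterexample to the lemma itself, but it shows your stated justification is insufficient.) What is needed --- and what the paper does --- is to check surjectivity on the $T$-fixed-point fibers, where $\E_\sigma$ is the degree-one part of $\C[y_1,\ldots,y_m]/\In_\sigma L$ and the induced map $\E_\sigma\to\E'_\sigma$ is the quotient coming from $\In_\sigma L\subseteq \In_\sigma L'$, hence surjective; since any nonempty closed $T$-invariant subset of the complete variety $X(\Sigma)$ contains a fixed point, the degeneracy locus is empty. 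Note also that equality of filtration images $q(F^{\rho_i}(k))=F'^{\rho_i}(k)$ alone does not obviously rescue your argument, because the fixed-point fiber involves \emph{intersections} $\bigcap_i F^{\rho_i}(k_i)$ of the filtrations over a maximal cone, and $q$ of an intersection can be strictly smaller than the intersection of the images; the initial-ideal description is the clean way to see surjectivity there.
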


\begin{proof}
    The existence of the map $\E \to \E'$ follows from \cite[Section 3]{Kaveh-Manon-tvb}. It can be checked that this map is surjection by passing to the maps $\E_\sigma \to \E_\sigma'$ defined on the torus fixed point fibers and noting that $\In_\sigma L \subseteq \In_\sigma L'$ by assumption.
\end{proof}

We finish this section by explaining the connection between the pair $(L, D)$ defining a toric vector bundle $\E$ and the Klyachko data \cite{Klyachko} of $\E$. Let $E$ denote the fiber of $\E$ over the identity of the torus $T$, viewed as an open subvariety of $X(\Sigma)$, for a ray $\rho_i \in \Sigma(1)$ let $u_i$ denote the integral ray generator.  By Klyachko's result, a toric vector bundle $\E$ corresponds to the following data:\\

\begin{enumerate}
\item for every ray $\rho \in \Sigma(1)$ a decreasing integral filtration: $$E \supseteq \cdots \supseteq F^\rho_r \supseteq F^\rho_{r+1} \supseteq \cdots \supseteq 0.$$
\item for every maximal face $\sigma \in \Sigma$ a splitting labelled by characters: $E = \bigoplus_{m \in M} L_m$,
\item such that if $\rho_i \subset \sigma$ we have $F^{\rho_i}_r = \bigoplus_{\langle u_i, m \rangle \geq r} L_m$.\\
\end{enumerate}

\noindent
From the perspective of the data $(L, D)$, $E$ is the first graded component of the quotient ring $\C[y_1, \ldots, y_m]/L$, and each space $F^{\rho_i}_r$ is the flat of the matroid $\MM(L)$ defined by those $y_j$ with $D_{ij} \geq r$. The existence of the splitting in $(2)$ above is then the condition that requires the rows $w_i$ for $i \in \sigma(1)$ to live in a common apartment for every face $\sigma \in \Sigma$. 

\subsection{The Cox ring of a projectivized toric vector bundle}

The Cox ring of a toric vector bundle is described in \cite{HMP}, \cite{Gonzalez-rank2}, \cite{GHPS}, \cite{Nodland}, and \cite{Kaveh-Manon-NOK}. We assume that $X(\Sigma)$ is smooth so that $\P\E$ is smooth, and $\CL(\P\E) \cong \Pic(\P\E) \cong \Pic(X(\Sigma))\times \Z$. The line bundles in the $\Z$ component of this splitting correspond to the Serre sheaves $\O_\E(\ell)$, $\ell \in \Z$ which emerge from regarding $\P\E$ as $\Proj_{X(\Sigma)}(\bigoplus \Sym^\ell(\E))$.  The $\Pic(X(\Sigma))$ component of this splitting comes from pullbacks $\pi^*\L$ of line bundles on the base: $\L \in \Pic(X(\Sigma))$.  Then, any line bundle on $\P\E$ is of the form $\pi^*\L \otimes \O_\E(\ell)$, and by the projection formula (see \cite[Chapter 2, Exercise 7.9]{hartshorne}, and \cite{HMP}) we have $H^0(\P\E, \pi^*\L \otimes \O_\E(\ell)) \cong H^0(X(\Sigma), \L\otimes \Sym^\ell(\E))$. This leads to the following expression for the Cox ring $\RR(\P\E)$ of $\P\E$:\\

\[\RR(\P\E) = \bigoplus_{\L \in \Pic(X(\Sigma)), \ell \in \Z} H^0(X(\Sigma), \L\otimes \Sym^\ell(\E)). \]\\

\noindent
We obtain a finer grading of $\RR(\P\E)$ by the $T$-section spaces:\\

\[\RR(\P\E) = \bigoplus_{\br \in \Z^n, \ell \in \Z} H_{T}^0(X(\Sigma), \O(\br) \otimes \Sym^\ell(\E)).\]\\

\noindent
The second expression has additional useful features. The section spaces $H_{T}^0(X(\Sigma), \O(\br) \otimes \Sym^\ell(\E))$ admit a combinatorial description by their \emph{parliament of polytopes} \cite{DJS}. Moreover, these spaces are the components of a multi parameter filtration of the polynomial ring $\C[y_1, \ldots, y_]/L \cong \Sym(E)$ defined by the data of the diagram $D$.  For a polynomial $p \in \C[y_1, \ldots, y_m]$ and $w \in \Z^m$, the $w$-weight of $p(\bar{y}) = \sum C_\alpha y^\alpha$ is defined to be $\min\{\langle w, \alpha \rangle \mid C_\alpha \neq 0\}$. Now let $(L, D)$ define the toric vector bundle $\E$.  For a row $w$ in $D$ we define $F^{w}_r \subset \C[y_1, \ldots, y_m]/L \cong \Sym(E)$ to be the set of elements $f$ such that there exists a polynomial $p \in \C[y_1, \ldots, y_m]$ which maps to $f$ in the quotient, with $w$-weight greater than or equal to $r$. Let $F^{w}_r(\ell)$ denote the intersection of this space with the elements of $\C[y_1, \ldots, y_m]/L$ of homogeneous degree $\ell$. The following is shown in \cite[Section 5]{Kaveh-Manon-tvb}.

\begin{proposition}
For $\mathbf{r} \in \Z^n$ and $\ell \in \Z$ we have:\\

\[H_{T}^0(X(\Sigma), \O(-\br) \otimes \Sym^\ell(\E)) = \bigcap_{1 \leq i \leq n} F^{w_i}_{r_i}(\ell).\]\\

\end{proposition}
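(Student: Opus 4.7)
The plan is to apply Klyachko's classical formula for $T$-equivariant global sections to the bundle $\F := \O(-\br) \otimes \Sym^\ell(\E)$ and extract the $T$-invariant (character-$0$) piece. Concretely, for any $T$-equivariant vector bundle $\F$ with Klyachko filtrations $\{G^{\rho}_\bullet(\F)\}_{\rho \in \Sigma(1)}$ on its identity fiber, Klyachko's theorem gives
$$H^0(X(\Sigma), \F) = \bigoplus_{m \in M} \bigcap_{\rho_i \in \Sigma(1)} G^{\rho_i}_{\langle u_i, m\rangle}(\F),$$
so that $H_T^0(X(\Sigma), \F) = \bigcap_i G^{\rho_i}_0(\F)$. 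The proposition thus reduces to checking that $G^{\rho_i}_0(\O(-\br) \otimes \Sym^\ell \E) = F^{w_i}_{r_i}(\ell)$ for each ray $\rho_i$.

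I would then combine the Klyachko data of the two tensor factors. With the conventions of the exact sequence $0 \to M \to \Z^n \to \CL(X(\Sigma)) \to 0$ recalled earlier in the section, the line bundle $\O(-\br)$ has a one-dimensional Klyachko filtration jumping at index $-r_i$ on ray $\rho_i$, and tensoring with a line bundle shifts Klyachko filtrations additively, so
$$G^{\rho_i}_0(\O(-\br) \otimes \Sym^\ell \E) = G^{\rho_i}_{r_i}(\Sym^\ell \E).$$
It then remains to match this with $F^{w_i}_{r_i}(\ell)$. By the $(L,D)$ construction, the Klyachko filtration of $\E$ at $\rho_i$ is $G^{\rho_i}_r(\E) = \textup{span}\{\bar y_j : D_{ij} \geq r\}$, which coincides with $F^{w_i}_r(1)$ because the $w_i$-weight of the linear form $y_j$ is exactly $D_{ij}$. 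On the symmetric power, Klyachko's filtration is determined multiplicatively by
$$G^{\rho_i}_r(\Sym^\ell \E) = \sum_{r_1 + \cdots + r_\ell \geq r} G^{\rho_i}_{r_1}(\E) \cdots G^{\rho_i}_{r_\ell}(\E),$$
and lifting these products back to $\C[y_1, \ldots, y_m]$ weighted by $w_i$ identifies this sum with the image in $(\C[y_1, \ldots, y_m]/L)_\ell$ of the span of degree-$\ell$ monomials of $w_i$-weight $\geq r$, which by definition is $F^{w_i}_r(\ell)$.

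The main step requiring care is this last identification of the symmetric-power Klyachko filtration with the explicit image-of-weight-filtration description used in the excerpt. For a general filtered quotient of a polynomial ring such commutativity with symmetric powers is not automatic, but it does hold here because $L$ is generated in degree one, so every lift of an element of $(\C[y]/L)_\ell$ can be built as a polynomial combination of lifts of degree-$1$ elements, and the multiplication map $\Sym^\ell(E) \twoheadrightarrow (\C[y]/L)_\ell$ intertwines the two filtrations. This is exactly the feature built into the construction of $\E$ from a diagram in \cite[Section 4]{Kaveh-Manon-tvb}. With that identification in hand, intersecting over all rays $\rho_i$ yields
$$H_T^0(X(\Sigma), \O(-\br) \otimes \Sym^\ell(\E)) = \bigcap_{1 \leq i \leq n} F^{w_i}_{r_i}(\ell),$$
as claimed.
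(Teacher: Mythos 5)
The paper itself offers no proof of this proposition: it is stated with the attribution ``The following is shown in \cite[Section 5]{Kaveh-Manon-tvb}.'' So there is no internal argument to compare against, and your write-up supplies a self-contained derivation where the paper defers to a reference. Your route --- Klyachko's section formula $H^0(X(\Sigma),\F)=\bigoplus_{m\in M}\bigcap_i G^{\rho_i}_{\langle u_i,m\rangle}(\F)$, extraction of the character-$0$ piece, the additive shift of filtrations under tensoring by the linearized line bundle $\O(-\br)$, and the multiplicative description of the filtration on $\Sym^\ell\E$ --- is correct and is almost certainly the argument the cited reference has in mind; the paper's own summary of the Klyachko data of $(L,D)$ (``each space $F^{\rho_i}_r$ is the flat of $\MM(L)$ defined by those $y_j$ with $D_{ij}\ge r$'') is exactly the input you use for the base case $\ell=1$. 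The two points you rightly flag as needing care are the real ones: first, the sign convention relating $\O(\br)$, the values $r_i$ on the ray generators, and the jump index of its one-dimensional filtration must be checked against the paper's normalization so that the shift lands on $G^{\rho_i}_{r_i}(\Sym^\ell\E)$ rather than $G^{\rho_i}_{-r_i}$ (your bookkeeping is consistent with the statement as written); second, the identification of $\sum_{r_1+\cdots+r_\ell\ge r}G^{\rho_i}_{r_1}(\E)\cdots G^{\rho_i}_{r_\ell}(\E)$ with the lift-weight space $F^{w_i}_r(\ell)$ does hold because $L$ is linear, so $\C[y_1,\ldots,y_m]/L\cong\Sym(E)$ and every monomial lift of weight $\ge r$ factors as a product of variables whose row entries $D_{ij_k}$ sum to at least $r$, with the reverse inclusion coming from superadditivity of the $\min$-weight under multiplication. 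I see no gap; what your approach buys is an explicit proof in place of a citation.
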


\noindent
If the requirement that $\Sigma$ be complete is relaxed, then the apartment conditions defining $\Delta(\Sigma, L)$ can be essentially omitted by deleting all higher codimension torus orbits. Alternatively, if $D$ is any point on $\Trop(L)^n$ and $\Sigma$ defines a toric surface, one can always enlarge the set of generators of $\Sym(E)$ to find $L' \subset \C[y_1, \ldots, y_{m'}]$ so that the associated extended diagram $D' \in \Delta(\Sigma, L')$ defines the same set of filtrations on $\Sym(E)$.  As a consequence, we see that the Cox rings of projectivized toric vector bundles are precisely the class of \emph{multi-Rees algebras} defined by configurations of integral points on a tropicalized linear space $\Trop(L)$. 

The second author and Kaveh define an algorithm \cite[Algorithm 5.6]{Kaveh-Manon-tvb} which constructs a finite generating set of $\RR(\P\E)$, provided one exists.  This is an adaptation of \cite[Algorithm 1.18]{Kaveh-Manon-NOK}, which constructs a finite Khovanskii basis for an algebra quasivaluation.  This algorithm can be implemented in symbolic algebra software like Macaulay2 \cite{m2}. 

\subsection{Presentations of the Cox ring $\RR(\P\E)$ and $\Sym$-degree 1}

General presentations of $\RR(\P\E)$ are studied in \cite{Kaveh-Manon-tvb}.  In this paper we are interested in projectivized toric vector bundles whose Cox rings are generated in minimal degree. 

\begin{definition}
We say $\RR(\P\E)$ is \emph{generated in $\Sym$-degree $1$} if the components $H^0(X(\Sigma), \L)$, $H^0(X(\Sigma), \L\otimes \E) \subset \RR(\P\E)$ for $\L \in \Pic(X(\Sigma))$ suffice to generate $\RR(\P\E)$. 
\end{definition}

Let $X_i \in \RR(\P\E)$ for $1 \leq i \leq n$ denote the section of the pullback of the prime divisor in $X(\Sigma)$ corresponding to the $i$-th ray of $\Sigma$. For each generator $y_j \in \C[y_1, \ldots, y_m]$ there is a corresponding section $Y_j \in F_{D_{1j}}^{w_1} \cap \cdots \cap F_{D_{nj}}^{w_n}$. The Cox ring $\RR(\P\E)$ can be realized as a subring of the partially Laurent polynomial ring $\Sym(E)[t_1^\pm, \ldots, t_n^\pm]$.  The generators $X_i$ and $Y_j$ are mapped into $\Sym(E)[t_1^\pm, \ldots, t_n^\pm]$ as follows:\\

\[X_i \to t_i^{-1},\]

\[Y_j \to y_jt^{D_j},\]\\

\noindent
where $t^{D_j} = t_1^{D_{1j}}\cdots t_n^{D_{nj}}$.  We let $\I$ denote the kernel of the map $\Phi: \C[X_1, \ldots, X_n, Y_1, \ldots, Y_m] \to \Sym(E)[t_1^\pm, \ldots, t_n^\pm]$ defined by these expressions. The following provides a tropical and algebraic criterion for $\RR(\P\E)$ to be generated in $\Sym$-degree $1$, it is a special case of \cite[Theorem 1.6]{Kaveh-Manon-tvb}.

\begin{proposition}\label{prop-sd1}
Let $\E$ be a toric vector bundle over $X(\Sigma)$, then each row $w_i$ of $D$ lifts to a point $\hat{w}_i \in \Trop(\I)$.  Moreover, $\RR(\P\E)$ is generated in $\Sym$-degree $1$ if and only if $\In_{\hat{w}_i}(\I)$ is a prime ideal for all $1 \leq i \leq n$.  
\end{proposition}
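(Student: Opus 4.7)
The plan is to prove this as a specialization of the Khovanskii-basis criterion [Kaveh-Manon-tvb, Theorem 1.6] to $\Sym$-degree $1$.

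For the lift, I would take $\hat w_i \in \Q^{n+m}$ to be the weight induced on $\C[X_1, \ldots, X_n, Y_1, \ldots, Y_m]$ by the $t_i$-adic valuation on the codomain of $\Phi$. Concretely, $\hat w_i(X_k) = -\delta_{ik}$ and $\hat w_i(Y_j) = D_{ij}$, so that the projection of $\hat w_i$ to the $Y$-coordinates recovers $w_i$. To verify $\hat w_i \in \Trop(\I)$, I would argue that $\Phi$ factors through the degeneration $\Sym(E) \to \C[y_1, \ldots, y_m]/\In_{w_i}(L)$; since $w_i \in \Trop(L)$ ensures that $\In_{w_i}(L)$ is monomial-free, any monomial appearing in $\In_{\hat w_i}(\I)$ would descend to a monomial relation in the degenerated codomain, which is impossible.

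The equivalence then follows by invoking Theorem 1.6 of [Kaveh-Manon-tvb] applied to the presentation $\C[X, Y] \to \RR(\P\E)$ with generators of $\Sym$-degrees $0$ and $1$. In that theorem, primality of the initial ideals at the tropical points associated to the rows $w_i$ is exactly the Khovanskii-type condition for the specified generators to close up under multiplication — i.e., for $\{X_i, Y_j\}$ to form a generating set of $\RR(\P\E)$. When each $\In_{\hat w_i}(\I)$ is prime, the associated graded $\gr_{\hat w_i}(\C[X, Y]/\I)$ is a domain identifying with the toric degeneration of $\RR(\P\E)$ along the $i$-th ray, and the given generators suffice; conversely, if primality fails at some $\hat w_i$, a non-trivial element of $\RR(\P\E)$ in higher $\Sym$-degree is needed to witness the missing piece of the associated graded.

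The main technical obstacle will be the careful identification of $\In_{\hat w_i}(\I)$ as the tropical object capturing the $\Sym$-degree $1$ condition, as opposed to a neighbouring condition involving the full Cox grading; equivalently, one needs the right interpretation of the tropicalization after accounting for the lineality of $\Trop(\I)$ coming from the Cox multi-grading. Once this bookkeeping, already carried out in [Kaveh-Manon-tvb, Section 5], is aligned with the current setup, the proposition becomes a direct corollary of [Kaveh-Manon-tvb, Theorem 1.6].
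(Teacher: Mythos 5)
There is a genuine gap, and it is in the choice of lift. The point you propose, with weights $\hat w_i(X_k) = -\delta_{ik}$ and $\hat w_i(Y_j) = D_{ij}$, is the $i$-th row of the matrix $[D \mid -I]$. That vector lies in the \emph{lineality space} of $\Trop(\I)$: the ideal $\I$ is homogeneous for the $\Z^n$-grading $\deg(X_k) = -\be_k$, $\deg(Y_j) = D_j$ (each term $X^{D_j - \delta}Y_j$ of a homogenized relation has $i$-th weight $(\delta_i - D_{ij}) + D_{ij} = \delta_i$, independent of $j$), so $\In_{\hat w_i}(\I) = \I$ for your $\hat w_i$, which is always prime. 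Your criterion is therefore vacuous and would assert that every toric vector bundle has $\RR(\P\E)$ generated in $\Sym$-degree $1$, contradicting Example \ref{ex-Hilb2}. Equivalently, under the identification $\phi: \Trop(\I) \to \Trop(L) \times \Q^n$ of Proposition \ref{prop-tropstructure}, your point maps to $(0, e_i)$, not to a point lying over $w_i$; the ``projection to the $Y$-coordinates'' you invoke is the naive coordinate projection, which is not the structural projection $\Trop(\I) \to \Trop(L)$.

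The correct lift is $\hat w_i = s(w_i) = (w_i, 0)$, i.e.\ weight $D_{ij}$ on $Y_j$ and weight $0$ on every $X_k$. This is the weight of the generators under the quasi-valuation coming from the Klyachko/Rees filtration $F^{w_i}_\bullet$ on $\Sym(E)$ — not the $t_i$-adic valuation on the Laurent factor of $\Sym(E)[t_1^\pm,\ldots,t_n^\pm]$, which only sees the class-group grading. With that correction your monomial-freeness argument for membership in $\Trop(\I)$ is fine, and the remainder of your proposal — deducing the equivalence from \cite[Theorem 1.6]{Kaveh-Manon-tvb} — is exactly what the paper does (it states the proposition as a special case of that theorem and gives no independent proof). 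So the substance to repair is precisely the ``bookkeeping'' you flagged as the main technical obstacle: it is not merely a matter of quotienting by lineality, since $s(w_i)$ and your point differ by $(0,\ldots,0,-e_i)$, which is \emph{not} a lineality direction and genuinely changes the initial ideal.
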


Now we fix some notation for special classes in $\CL(\P\E) \cong \Pic(\P\E)$. Recall that $\be_i \in \CL(X(\Sigma) \cong \Pic(X(\Sigma))$ denotes the class of the toric divisor of $X(\Sigma)$ corresponding to the $i$-th ray. The $j$-th column of the diagram $D$ defines the class $\bd_j = \sum_{i =1}^n D_{ij}\be_i \in \CL(X(\Sigma))$.  Under the $\CL(\P\E)$ grading of $\RR(\P\E)$ we have:\\

\[\deg(X_i) = (-\be_i, 0),\]

\[\deg(Y_j) = (\bd_j, 1).\]\\

We let $\PsEff(\P\E) \subset \CL(\P\E)\otimes \Q$ denote the pseudo-effective cone of $\P\E$, and $\Eff(\P\E) \subset \CL(\P\E)$ denote the monoid of effective classes. 

\begin{proposition}
Let $\E$ be a toric vector bundle over $X(\Sigma)$ with $\RR(\P\E)$ generated in $\Sym$-degree $1$, then:\\

\[\PsEff(\P\E) = -\PsEff(X(\Sigma))\times \{0\} + \sum_{j =1}^m \Q_{\geq 0}(\bd_j, 1),\]

\[\Eff(\P\E) = -\Eff(X(\Sigma))\times \{0\} + \sum_{j =1}^m \Z_{\geq 0}(\bd_j, 1).\]\\

\end{proposition}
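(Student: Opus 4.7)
The plan is to leverage the hypothesis that $\RR(\P\E)$ is generated in $\Sym$-degree $1$: under this hypothesis the map $\Phi:\C[X_1,\ldots,X_n,Y_1,\ldots,Y_m] \to \Sym(E)[t_1^{\pm},\ldots,t_n^{\pm}]$ surjects onto $\RR(\P\E)$, so $\RR(\P\E) \cong \C[X_1,\ldots,X_n,Y_1,\ldots,Y_m]/\I$ is generated as a $\C$-algebra by the $X_i$ and $Y_j$. Consequently, every $(\alpha,\ell)$-graded piece of $\RR(\P\E)$ is spanned by monomials $\prod X_i^{a_i}\prod Y_j^{b_j}$ of that multidegree, and $(\alpha,\ell) \in \Eff(\P\E)$ if and only if at least one such monomial exists.

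Next, I would compute the multidegree of such a monomial: $\deg(\prod X_i^{a_i}\prod Y_j^{b_j}) = \sum_i a_i(-\be_i,0) + \sum_j b_j(\bd_j,1)$. Hence $\Eff(\P\E)$ is the sub-monoid of $\CL(\P\E)$ generated by the classes $(-\be_i,0)$ and $(\bd_j,1)$. To put this in the claimed form, I recall the standard fact that for a smooth complete toric variety $\Eff(X(\Sigma)) = \sum_i \Z_{\geq 0}\be_i$, since every effective class is linearly equivalent to a torus-invariant effective divisor, which is a non-negative integer combination of the prime toric divisors. Negating and pairing with $\{0\}$ identifies $\sum_i \Z_{\geq 0}(-\be_i,0)$ with $-\Eff(X(\Sigma))\times\{0\}$, yielding the claimed Minkowski-sum description of $\Eff(\P\E)$.

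For the pseudo-effective cone, the key observation is that $\Eff(\P\E)$ is finitely generated as a monoid, so the rational cone it spans is polyhedral, hence closed; this cone is therefore equal to its own closure, which is $\PsEff(\P\E)$. The same closure argument on $X(\Sigma)$ gives $\PsEff(X(\Sigma)) = \sum_i \Q_{\geq 0}\be_i$, and rewriting $\sum_i \Q_{\geq 0}(-\be_i,0) + \sum_j \Q_{\geq 0}(\bd_j,1)$ as $-\PsEff(X(\Sigma))\times\{0\} + \sum_j \Q_{\geq 0}(\bd_j,1)$ completes the identification.

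The only real hurdle is verifying that $\Sym$-degree $1$ generation really does imply surjectivity of $\Phi$, i.e. that the $X_i$ and $Y_j$ alone suffice as algebra generators. This follows because the sub-algebra generated by the $X_i$ already exhausts $\bigoplus_\L H^0(X(\Sigma),\L) = \RR(X(\Sigma))$ (a polynomial ring in the $X_i$), while the $Y_j$ — indexed by a generating set of $\Sym(E)$ in degree $1$ — produce all of $\bigoplus_\L H^0(X(\Sigma),\L\otimes\E)$ modulo multiplication by $X_i$'s, per the standing pair $(L,D)$ realizing $\E$ in the multi-Rees description. Once $\Phi$ is surjective, the rest is a direct structural computation.
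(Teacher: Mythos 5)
Your argument is correct and is exactly the one the paper intends (the paper states this proposition without proof, but the surrounding text makes clear the authors have in mind that $\Sym$-degree $1$ generation means $\RR(\P\E) = \C[\bar X,\bar Y]/\I$, so the effective monoid is generated by $\deg(X_i)=(-\be_i,0)$ and $\deg(Y_j)=(\bd_j,1)$, and the pseudo-effective cone is the closed polyhedral cone these span). The identification of $\sum_i \Z_{\geq 0}(-\be_i,0)$ with $-\Eff(X(\Sigma))\times\{0\}$ and the closure argument for $\PsEff$ are both standard and correctly handled.
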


Every bundle $\E$ with $\RR(\P\E)$ generated in $\Sym$-degree $1$ has a distinguished embedding into a split toric vector bundle.  Let $\E$ be defined by the pair $(L, D)$ with $D$ non-negative. We let $\V_D = \bigoplus_{j = 1}^m \O(D_j)$, where $\O(D_j)$ denotes the $T$-linearized toric line bundle on $X(\Sigma)$ corresponding to $D_j \in \Z^n$.  In general, an embedding of a Mori dream space $X$ into a toric variety $Z$ is said to be a \emph{neat embedding} (\cite{ADUH-book}) if it induces a surjection on Cox rings and an isomorphism of class groups.

\begin{proposition}\label{prop-neat}
There is a surjection of toric vector bundles: $\V_D \to \E$, and a corresponding neat embedding $\P\E \to \P\V_D$.   
\end{proposition}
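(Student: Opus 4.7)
The plan is to identify $\V_D$ with the toric vector bundle associated to the pair $(0, D)$, where $0 \subset \C[y_1, \ldots, y_m]$ denotes the zero ideal, and then to obtain the surjection of vector bundles as a direct application of Lemma \ref{lem-bundlemap}. First I would verify that $(0, D)$ indeed corresponds to $\V_D = \bigoplus_{j=1}^m \O(D_j)$: the matroid $\MM(0)$ is the free matroid on $\{y_1, \ldots, y_m\}$, so $\Trop(0) = \Q^m$ is a single apartment and every $D$ lies trivially in $\Delta(\Sigma, 0)$. The filtration $F^{w_i}_r$ on the degree-$1$ component of $\C[y_1, \ldots, y_m]/0$ is spanned by those $y_j$ with $D_{ij} \geq r$, which is precisely the Klyachko filtration of $\bigoplus_j \O(D_j)$ at the ray $\rho_i$. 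Thus $\V_D$ is the toric vector bundle determined by $(0, D)$.

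Since $0 \subseteq L$ and $D \in \Delta(\Sigma, L) \subseteq \Delta(\Sigma, 0)$, Lemma \ref{lem-bundlemap} applied to the inclusion $0 \subseteq L$ yields a surjection $\V_D \to \E$ of toric vector bundles. Projectivizing a surjection of vector bundles produces a closed embedding of the projectivization of the quotient, so we get a closed embedding $\iota : \P\E \hookrightarrow \P\V_D$.

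To show $\iota$ is a neat embedding I must verify two things: that the induced map on class groups is an isomorphism, and that the induced map on Cox rings is surjective. For the first, both $\CL(\P\E)$ and $\CL(\P\V_D)$ split as $\Pic(X(\Sigma)) \times \Z$, with the second factor generated by the tautological class $\O(1)$. The pullback $\iota^*$ restricts to the identity on the $\Pic(X(\Sigma))$ factor (both projectivizations have the same base and $\iota$ commutes with the bundle projections), and sends $\O_{\V_D}(1)$ to $\O_\E(1)$, giving the required isomorphism. For the second, the split bundle $\V_D$ has $\P\V_D$ itself a smooth toric variety, so $\RR(\P\V_D) = \C[X_1, \ldots, X_n, Y_1, \ldots, Y_m]$ is a polynomial ring. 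Under the hypothesis that $\RR(\P\E)$ is generated in $\Sym$-degree $1$, the classes of the generators $X_i$ and $Y_j$ defined just before Proposition \ref{prop-sd1} already generate $\RR(\P\E)$, and $\RR(\P\E) = \C[X_1, \ldots, X_n, Y_1, \ldots, Y_m]/\I$; the pullback $\iota^*$ is precisely the quotient map, hence surjective.

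The main obstacle is the identification of $\V_D$ with the $(0,D)$-bundle: once that is in place, both the surjection and the neatness verification follow almost formally from Lemma \ref{lem-bundlemap}, the structure of $\P\V_D$ as a split projective bundle, and the $\Sym$-degree $1$ hypothesis. The verification that the two ingredients of neatness hold simultaneously — isomorphism on class groups rather than just surjection, together with surjection on Cox rings — is essentially built into the fact that $\RR(\P\E)$ is by definition the quotient of the ambient polynomial Cox ring by $\I$, so no separate dimension count is needed.
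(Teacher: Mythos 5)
Your proof is correct and takes essentially the same route as the paper: the surjection $\V_D \to \E$ comes from Lemma \ref{lem-bundlemap} applied to the inclusion of the zero ideal into $L$ (the paper leaves the identification of $\V_D$ with the $(0,D)$-bundle implicit, which you usefully spell out), and neatness is verified exactly as in the paper via the pullback of $\O_{\V_D}(1)$ and of line bundles from the base, together with the presentation $\RR(\P\E) = \C[\bar{X},\bar{Y}]/\I$ under the $\Sym$-degree $1$ hypothesis.
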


\begin{proof}
The surjection $\V_D \to \E$, and therefore the embedding $\P\E \to \P\V_D$ are consequences of Lemma \ref{lem-bundlemap}.  By definition, the Serre sheaf $\O_{\V_D}(1)$ pulls back to $\O_\E(1)$ under this embedding, and pullbacks of line bundles on $X(\Sigma)$ are pulled back to pullbacks of line bundles from $X(\Sigma)$. Consequently the class group $\CL(\P\V_D)$ is mapped isomorphically onto $\CL(\P\E)$. Moreover, the embedding induces the map on graded rings $\C[X_1, \ldots, X_n, Y_1, \ldots, Y_j] \to \RR(\P\E)$, where the polynomial ring can be interpretted as $\RR(\P\V_D)$.
\end{proof}

Proposition \ref{prop-neat} implies that $\Eff(\P\E) \cong \Eff(\P\V_D)$, $\PsEff(\P\E) \cong \PsEff(\P\V_D)$, and that the map $H^0(\P\V_D, \O(\alpha, \beta)) \to H^0(\P\E,\O(\alpha, \beta))$ is a surjection for all $(\alpha, \beta) \in \CL(X(\Sigma))\times \Z$.  As a consequence we can give a useful description of the section spaces of $\P\E$ in terms of certain integral polytopes. Consider the map $\deg: \Q^{n+m} \to \CL(\P\V_D)\otimes \Q$ defined by $X_i \to (-\be_i, 0)$ and $Y_j \to (\bd_j, 1)$. For $(\alpha, \beta) \in \CL(\P\V_D)\otimes \Q$ we define the following polytope:\\

\[P_{\alpha, \beta} = \{(a,b) \mid  \beta = \sum b_j, \sum b_j\bd_j + \alpha = \sum a_i\be_i\} \subset \Q_{\geq 0}^{n+m}.\]\\  

\noindent
For $(a,b) \in P_{\alpha, \beta}$, $b$ defines an integral point of the $\beta$-th Minkowski sum of the standard $m-1$ simplex, and $a \in \Q^n$ maps to $\sum b_j\beta_j + a$ under the map $\Q^n \to \CL(X(\Sigma))\otimes \Q$.

\begin{proposition}
    The integral points in $P_{\alpha, \beta}$ coincide with a basis of $H^0(\P\V_D,\O(\alpha, \beta))$.  
\end{proposition}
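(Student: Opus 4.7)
The plan is to reduce the claim to the corresponding statement for toric line bundles on $X(\Sigma)$, exploiting the fact that $\V_D$ splits as a direct sum of line bundles. First I would invoke the definition $\V_D = \bigoplus_{j=1}^m \O(D_j)$ to compute the symmetric power
\[
\Sym^\beta(\V_D) \;=\; \bigoplus_{b \in \Z_{\geq 0}^m,\; |b| = \beta} \O\!\left(\textstyle\sum_j b_j D_j\right),
\]
where $|b| = \sum_j b_j$. This is the standard description of symmetric powers of a split vector bundle, and the summands are precisely indexed by monomials of degree $\beta$ in the fiber coordinates $Y_1, \ldots, Y_m$.

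Next I would combine this with the projection formula to express the global sections as
\[
H^0(\P\V_D, \O(\alpha, \beta)) \;\cong\; H^0\!\left(X(\Sigma),\, \O(\alpha) \otimes \Sym^\beta(\V_D)\right) \;\cong\; \bigoplus_{|b| = \beta} H^0\!\left(X(\Sigma),\, \O\!\left(\alpha + \textstyle\sum_j b_j \bd_j\right)\right),
\]
as mentioned in the discussion of the Cox ring grading earlier in the paper. This rewrites the search for a basis as a question about toric line bundles on $X(\Sigma)$ alone.

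Then I would apply the standard toric description of sections of line bundles on the smooth toric variety $X(\Sigma)$: for any class $\gamma \in \CL(X(\Sigma))$, a basis of $H^0(X(\Sigma), \O(\gamma))$ is given by the Cox ring monomials $X^a = X_1^{a_1} \cdots X_n^{a_n}$ with $a \in \Z_{\geq 0}^n$ satisfying $\sum_i a_i \be_i = \gamma$. Applying this with $\gamma = \alpha + \sum_j b_j \bd_j$ for each $b$ in the previous direct sum, a basis of $H^0(\P\V_D, \O(\alpha, \beta))$ is given by pairs $(a,b) \in \Z_{\geq 0}^{n+m}$ satisfying $|b| = \beta$ and $\sum_i a_i \be_i = \alpha + \sum_j b_j \bd_j$.

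Finally I would observe that these conditions are exactly the defining inequalities and equalities of the polytope $P_{\alpha, \beta}$, so the integral points of $P_{\alpha, \beta}$ are in bijection with the basis elements constructed above. The only real subtlety is bookkeeping: checking that the map $\deg$ sending $X_i \mapsto (-\be_i, 0)$ and $Y_j \mapsto (\bd_j, 1)$ is consistent with identifying the Cox ring of $\P\V_D$ with the polynomial ring $\C[X_1, \ldots, X_n, Y_1, \ldots, Y_m]$, but this identification is already recorded in Proposition \ref{prop-neat} and its proof. There is no hard obstacle, since $\V_D$ is a split bundle and everything reduces to toric combinatorics.
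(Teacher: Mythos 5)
Your proof is correct and is essentially the argument the paper leaves implicit: the Cox ring of the split bundle $\P\V_D$ is the polynomial ring $\C[X_1,\ldots,X_n,Y_1,\ldots,Y_m]$, and unwinding its $\CL(\P\V_D)$-grading via the decomposition of $\Sym^\beta(\V_D)$, the projection formula, and the monomial basis of toric section spaces identifies the degree-$(\alpha,\beta)$ piece with the lattice points of $P_{\alpha,\beta}$. No gaps; the bookkeeping you flag is exactly the content of Proposition \ref{prop-neat}.
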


By considering a slight change of coordinates we show that $P_{\alpha, \beta}$ is always a \emph{generalized Cayley polytope}.  Pick a  section to the surjection $\Z^n \to \CL(X(\Sigma))$; this amounts to choosing a $T$ linearization of every divisor class on $X(\Sigma)$ in a way which is compatible with addition in $\CL(X(\Sigma))$. Such a section determines a splitting $\Z^n \cong M \oplus \CL(X(\Sigma))$.  Accordingly, each divisor class $\bd \in \CL(X(\Sigma))$ is assigned a poltyope $\Delta(\bd)$; we have $\Delta(\bd_1 + \bd_2) = \Delta(\bd_1) + \Delta(\bd_2)$, and the rational points in $\Delta(\bd)$ are precisely the image of those $a \in \Q_{\geq 0}^n$ which map to $\bd$ under the projection $\Q^n \to M\otimes \Q$. Let $\psi: \Q^{n+m} \to \Q^m \times M\otimes \Q$ be the projection map induced from this splitting.  

\begin{proposition}\label{prop-nobycayley}
Assume that $\beta_j\bd_j + \alpha \in \Eff(X(\sigma))$ for all $1 \leq j \leq m$. The polytope $P_{\alpha, \beta}$ is mapped isomorphically onto $\psi\circ P_{\alpha, \beta}$. Moreover, $\psi\circ P_{\alpha, \beta}$ is the generalized Cayley polytope obtained by placing $\Delta(\beta\bd_j + \alpha)$ above $\beta e_j \in \Q^m$.  
\end{proposition}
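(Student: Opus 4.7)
The plan is to verify the two claims by fibering both polytopes over the projection to the $\Q^m$-coordinates. I will first address the injectivity of $\psi|_{P_{\alpha,\beta}}$, and then identify the image with the claimed Cayley polytope by comparing fibers over a common base.

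For injectivity, observe that $\psi$ sends $(a,b)$ to the pair $(b,\mu(a))$, where $\mu:\Q^n \to M\otimes\Q$ denotes the projection induced by the chosen splitting $\Z^n \cong M \oplus \CL(X(\Sigma))$, and let $\pi:\Q^n\to\CL(X(\Sigma))\otimes\Q$ be the quotient. On $P_{\alpha,\beta}$, the defining relation $\sum_j b_j\bd_j + \alpha = \sum_i a_i \be_i$ forces $\pi(a) = \sum_j b_j\bd_j + \alpha$, so the $\CL(X(\Sigma))$-component of $a$ is determined by $b$ alone. Since the splitting recovers $a$ from the pair $(\pi(a),\mu(a))$, and $\mu(a)$ is recorded in $\psi(a,b)$, the map $\psi|_{P_{\alpha,\beta}}$ is injective; being the restriction of a linear map, it is an isomorphism onto its image.

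For the Cayley description, let $\mathcal{C}$ denote the polytope obtained by placing $\Delta(\beta\bd_j + \alpha)$ above $\beta e_j$, and project both $\psi\circ P_{\alpha,\beta}$ and $\mathcal{C}$ onto the base simplex $B = \{b \in \Q_{\geq 0}^m : \sum_j b_j = \beta\}$. Over a point $b \in B$, the fiber of $\psi\circ P_{\alpha,\beta}$ is
\[
\mu\bigl(\pi^{-1}(\textstyle\sum_j b_j\bd_j + \alpha)\cap \Q_{\geq 0}^n\bigr) \;=\; \Delta\bigl(\textstyle\sum_j b_j\bd_j + \alpha\bigr),
\]
directly from the definition of $\Delta$. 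Writing $b = \sum_j (b_j/\beta)(\beta e_j)$ as a convex combination of the vertices of $B$, the fiber of $\mathcal{C}$ over $b$ is the Minkowski-convex combination $\sum_j (b_j/\beta)\,\Delta(\beta\bd_j + \alpha)$. The stated Minkowski additivity $\Delta(\bd_1+\bd_2) = \Delta(\bd_1)+\Delta(\bd_2)$, together with the homogeneity $\Delta(\lambda\bd) = \lambda\Delta(\bd)$ for $\lambda \in \Q_{\geq 0}$ (immediate from $\Delta(\bd) = \mu(\pi^{-1}(\bd)\cap\Q_{\geq 0}^n)$ since the nonnegative orthant is stable under positive rational scaling), collapses this combination to $\Delta(\sum_j b_j\bd_j + \alpha)$. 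The fibers agree, and since both polytopes project onto $B$, they are equal.

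The hypothesis $\beta\bd_j + \alpha \in \Eff(X(\Sigma))$ is precisely what makes each $\Delta(\beta\bd_j + \alpha)$ nonempty, ensuring that $\mathcal{C}$ actually sits above every vertex $\beta e_j$ and that the convex-combination computation above never involves an empty summand. I expect the only subtle point to be the rational homogeneity of $\Delta$, since only integer additivity is recorded in the preceding discussion; once that is noted directly from the defining image formula, the fiberwise comparison completes the proof.
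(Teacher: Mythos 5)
Your proposal is correct and follows essentially the same route as the paper: injectivity of $\psi|_{P_{\alpha,\beta}}$ because the $\CL(X(\Sigma))$-component of $a$ is forced by $b$ via the defining relation, followed by a fiberwise comparison over the base simplex using the additivity and (rational) homogeneity of $\bd \mapsto \Delta(\bd)$. Your explicit note that the Cayley fiber over $b$ is the Minkowski convex combination $\sum_j (b_j/\beta)\Delta(\beta\bd_j+\alpha)$, and that rational homogeneity is needed to collapse it, is a slightly more careful rendering of the same computation the paper performs.
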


\begin{proof} 
First observe that $P_{\alpha, \beta}$ naturally maps onto the $\beta$-Minkowski sum of the standard $m-1$ simplex by sending $(a, b)$ to $b$. Now we map $(a, b)$ to the point $(m, \sum a_i\be_i, b) = (m,\sum b_j b + \alpha, b)$, and then to $(m, b)$.  The composition of these maps is $\psi$, $m \in \Delta(\sum b_j b + \alpha)$, and all such $m$ are obtained this way be definition. But the middle component in the interim step is determined by $\psi(a, b)$. This shows that $P_{\alpha, \beta} \cong \psi \circ P_{\alpha, \beta}$.  Finally, the $b$ fiber of $\psi\circ P_{\alpha,\beta}$ is $\Delta(\sum b_j\bd_j + \alpha) = \sum \Delta(\beta \bd_j + \alpha)$, which is the $b$ fiber the required Cayley polytope. 
\end{proof}

\subsection{CI Bundles}

For any linear form $\ell(\bar{y}) = \sum_{j = 1}^m C_jy_j \in L$ there is a corresponding relation $p(\bar{X}, \bar{Y}) = \frac{1}{X^\delta}\sum_{j =1}^m C_j X^{D_j}Y_j \in \I$, where $\delta$ denotes the least common multiple of the monomials $X^{D_j}$ with $C_j \neq 0$. In \cite{Kaveh-Manon-tvb} a combinatorial classification of those toric vector bundles which are presented as a complete intersection by relations of the form $p(\bar{X}, \bar{Y})$ is given. Fix a generating set $\ell_1, \ldots, \ell_{m-r} \in L$, and let $M$ be the $m-r \times m$ matrix of coefficients of $\ell_1, \ldots, \ell_{m-r}$. For each $A \subset [n]$ we define a new matrix $M_A$.  The entry $M_A(k,j)$ equals the $k,j$-th entry of $M$ if the rows $w_i$, $i \in A$ of $D$ have a common minimal entry at $j$, and is $0$ otherwise. We let $m_A$ be the rank of $M_A$. The following is \cite[Proposition 6.2]{Kaveh-Manon-tvb}.

\begin{theorem}\label{thm-CIcondition}
The Cox ring $\RR(\P\E)$ is presented in $\Sym$-degree $1$ by the polynomials $p_1, \ldots, p_{m-r}$ if and only if for every $A \subset [n]$ we have $r < |A| + m_A$, and for every $B \subset [n]$ and $i \in B$ we have $1 + m_{\{i\}} < |B| + m_B$. 
\end{theorem}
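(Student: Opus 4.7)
The plan is to split the \emph{iff} into two directions and in both cases translate the combinatorial hypothesis into a statement about initial ideals of $\I$. First I would verify the easy inclusion: for each generator $\ell_k = \sum_j C_j y_j$, applying $\Phi$ to $p_k = \frac{1}{X^{\delta_k}}\sum_j C_j X^{D_j}Y_j$ gives $t^{\delta_k}\sum_j C_j y_j = t^{\delta_k}\ell_k(\bar y) = 0$, so each $p_k \in \I$. Next, a dimension count: since $\P\E$ has dimension $d + r - 1$ and $\CL(\P\E)$ has rank $n - d + 1$, the ideal $\I$ has codimension $m-r$ in $\C[X_1,\ldots,X_n,Y_1,\ldots,Y_m]$, which matches the number of candidate relations $p_k$. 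So the task reduces to showing $\I = (p_1,\ldots,p_{m-r})$ precisely when the two conditions hold.

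The central step is to identify $m_A$ with the algebraic invariant it is designed to compute. For each $A \subset [n]$, fix a common refinement $\hat w_A$ of the lifts $\hat w_i$ ($i \in A$) and examine $\In_{\hat w_A}(p_k)$: only the monomials $X^{D_j}Y_j/X^{\delta_k}$ with $j \in J(A) := \{j : (w_i)_j$ is minimal in $w_i$ for every $i \in A\}$ survive, and the vector of surviving coefficients of $p_k$ is exactly the $k$-th row of $M_A$. Consequently the $\Sym$-degree $1$ piece of $\In_{\hat w_A}(\I)$ has dimension $m_A$, and the tropical/Gr\"obner-theoretic codimension of $\In_{\hat w_A}(\I)$ in the affine chart $\Spec \C[X_i^{\pm 1}, i \in A][Y_j, j \in J(A)]$ equals $m_A$ as well.

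For the sufficient direction I would then proceed as follows. The inequality $r < |A| + m_A$ forces $V(p_1,\ldots,p_{m-r})$ to have the correct codimension along every stratum, so $(p_k)$ is a regular sequence; combined with the observation above this shows the $p_k$ define a complete intersection of the same dimension as $\RR(\P\E)$. To conclude $(p_1,\ldots,p_{m-r}) = \I$ rather than merely containment, I would invoke Proposition~\ref{prop-sd1}: it suffices that each $\In_{\hat w_i}(\I)$ is prime. Primality follows once one checks that no extra relation appears in the initial ideal beyond those coming from the $p_k$'s, which is precisely the content of $1 + m_{\{i\}} < |B| + m_B$ for all $B \ni i$—this inequality prevents a monomial on a larger stratum $B$ from becoming a zero divisor modulo the initial relations indexed by $\{i\}$.

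For the necessary direction I would run the argument in reverse: if $\RR(\P\E) = \C[X,Y]/(p_1,\ldots,p_{m-r})$ is presented in $\Sym$-degree $1$, then examining the codimension of the initial scheme at each stratum $A$ yields $r \leq |A| + m_A - 1$, i.e.\ $r < |A|+m_A$; and primality of each $\In_{\hat w_i}(\I)$ forces the strict inequalities $1 + m_{\{i\}} < |B| + m_B$ (a violation would produce a torus-fixed component in the initial scheme, contradicting primality). The main obstacle I anticipate is the careful bookkeeping in this last step—unpacking exactly which initial monomial would witness a failure of primality when $1 + m_{\{i\}} = |B| + m_B$—which requires a precise understanding of how flats of $\MM(L)$ and the auxiliary matroid-like structure on subsets of rows of $D$ interact with lifts from $\Trop(L)$ to $\Trop(\I)$.
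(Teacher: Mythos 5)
First, a caveat: the paper does not prove this statement at all --- it is quoted verbatim as \cite[Proposition 6.2]{Kaveh-Manon-tvb} --- so there is no in-paper argument to compare against, and I am judging your proposal on its own merits. Your overall strategy (show $p_k \in \I$, match codimensions, interpret $m_A$ as the rank of the linear system surviving on a toric stratum, and control initial ideals via Proposition \ref{prop-sd1}) is the right shape, and your computation that $\I$ has codimension $m-r$ is correct. But the two load-bearing steps are asserted rather than proved, and each hides a genuine difficulty.

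The first gap is the passage from ``$V(p_1,\ldots,p_{m-r})$ has the correct codimension along every stratum'' to ``$(p_1,\ldots,p_{m-r}) = \I$.'' The codimension bound does give you a regular sequence, hence a Cohen--Macaulay, unmixed complete intersection of the same dimension as $V(\I)$; but an equidimensional scheme containing $V(\I)$ as a component can still have other components of that dimension, or be non-reduced along $V(\I)$. You need an additional argument --- e.g.\ exhibiting a single prime initial ideal of $(p_1,\ldots,p_{m-r})$ of the right codimension, or a Serre-type $R_0+S_1$ check --- and your appeal to Proposition \ref{prop-sd1} is circular as written: that proposition concerns primality of initial ideals of $\I$ (which is prime by definition, being a kernel), whereas what you must establish is primality of, or equality with, the candidate ideal $(p_1,\ldots,p_{m-r})$. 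The second gap is your identification of the $k$-th row of $M_A$ with the surviving coefficients of $\In_{\hat w_A}(p_k)$: the initial form of $p_k$ retains the terms $j$ minimizing $D_{ij}$ \emph{over the support of $\ell_k$}, while $M_A$ is defined by the positions where the \emph{row minimum of $w_i$} is attained; these do not coincide in general (the relevant row of $M_A$ can even vanish while $\In_{\hat w_A}(p_k)$ cannot), so the dictionary between $m_A$ and the rank of the initial system needs to be set up more carefully. Finally, the necessity direction and the role of the second family of inequalities $1+m_{\{i\}}<|B|+m_B$ are only gestured at; to make them precise you must exhibit, when an inequality fails, an explicit coordinate subspace (stratum) that lies in $V(p_1,\ldots,p_{m-r})$ but not in $V(\I)$, or an explicit non-binomial/non-prime degeneration of $\In_{\hat w_i}(\I)$, and neither witness is constructed in your sketch.
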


For $\ell = \sum C_jy_j \in L$ let $\trop(\ell)$ denote the piecewise-linear function $\MIN\{y_j \mid C_j \neq 0\}$. The subsets of $\Q^m$ where $\trop(\ell)$ is a linear function subdivide $\Q^m$ into a complete polyhedral fan $\Sigma(\ell)$.  We let $\F(\Sigma, \ell_1, \ldots, \ell_{m-r}) \subset \Trop(L)^n$ denote the fan obtained by refining $\Delta(\Sigma, L)$ with respect to $\Sigma(\ell_k)^n$ for $1 \leq k \leq m-r$. Observe that the diagrams $D$ which satisfy the inequalities of Proposition \ref{thm-CIcondition} correspond to integral points in faces of $\F(\Sigma, \ell_1, \ldots, \ell_{m-r})$. Moreover, this fan clearly only depends on the supports of the forms $\ell_1, \ldots, \ell_r$, and there are only a finite number of sets of supports of generators of $L$. It follows that we may further refine $\Delta(\Sigma, L)$ with respect to all generating sets of $L$ to get a fan $\F(\Sigma, L)$.  

\begin{theorem}
There is an integral polyhedral fan $\F(\Sigma, L) \subset \Q^{n\times m}$ such that $\RR(\P\E)$ is presented in $\Sym$-degree $1$ by polynomials $p_1, \ldots, p_{m-r}$ for a collection of linear generators $\ell_1, \ldots, \ell_{m-r} \in L$ if and only if $D$ is an integral in the relative interior of certain faces of $\F(\Sigma, L)$. 
\end{theorem}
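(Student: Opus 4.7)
The plan is to bootstrap from the preceding discussion: the fan $\F(\Sigma, \ell_1, \ldots, \ell_{m-r})$ has already been constructed for a fixed choice of generators, and by the paragraph before the theorem the integral diagrams $D$ for which $\RR(\P\E)$ admits the CI presentation by $p_1, \ldots, p_{m-r}$ are exactly the integral points in the relative interior of certain faces of this fan. The only thing left to do is to produce a single integral polyhedral fan that simultaneously records the CI property for \emph{every} possible choice of generating set $\ell_1, \ldots, \ell_{m-r}$ of $L$.

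First I would note that Theorem~\ref{thm-CIcondition} is a condition only on the numbers $m_A$ associated to each subset $A\subseteq [n]$, and these numbers depend on $D$ only through (i) which coordinate positions $j$ achieve the minimum among the rows $w_i$ for $i\in A$, and (ii) the ranks of the submatrices of $M$ obtained by restricting to those positions. Item (i) is precisely the combinatorial data recorded by the support fans $\Sigma(\ell_k)^n$, so the $m_A$ are constant on the relative interiors of the cones of $\F(\Sigma,\ell_1,\ldots,\ell_{m-r})$. Hence membership in the CI locus is a property of faces of this refinement.

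Next I would argue the finiteness needed to take a common refinement. A generator $\ell\in L$ is a linear form in finitely many variables $y_1,\ldots,y_m$, so its support is a subset of $\{1,\ldots,m\}$; there are only $2^m$ possible supports, and hence only finitely many possible unordered $(m-r)$-tuples of supports realized by some generating set of $L$. Each such tuple of supports determines a fan $\Sigma(\ell_k)^n$ depending only on the supports (not on the coefficients), so there are only finitely many fans $\F(\Sigma,\ell_1,\ldots,\ell_{m-r})$ that arise. I would then define $\F(\Sigma,L)$ to be the common refinement of all of these fans together with $\Delta(\Sigma,L)$. This is an integral polyhedral fan in $\Q^{n\times m}$, and on the relative interior of each cone the CI condition of Theorem~\ref{thm-CIcondition} holds or fails uniformly with respect to any fixed generating set. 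Therefore $D$ admits a $\Sym$-degree $1$ CI presentation for some generating set $\ell_1, \ldots, \ell_{m-r}$ if and only if $D$ lies in the relative interior of a face of $\F(\Sigma,L)$ on which that set of CI inequalities holds.

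The main obstacle is just the bookkeeping behind the finiteness step: one has to be careful that \emph{different} coefficient choices with the \emph{same} supports give rise to the same fan $\Sigma(\ell_k)$, which is true because the function $\trop(\ell)=\MIN\{y_j\mid C_j\neq 0\}$ only remembers the support, and that the rank data entering $m_A$ is likewise constant on cones (this is immediate from the definition of $M_A$ in terms of coincidence patterns of row-minima). Once these two constancy statements are in place the theorem reduces to the assertion that a finite collection of integral polyhedral fans admits a common integral refinement, which is standard.
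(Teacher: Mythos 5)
Your proposal is correct and follows essentially the same route as the paper: refine $\Delta(\Sigma,L)$ by the support fans $\Sigma(\ell_k)^n$ so that the CI inequalities of Theorem~\ref{thm-CIcondition} are constant on relative interiors of cones, observe that these fans depend only on the (finitely many) supports of generating sets of $L$, and take the common refinement. Your explicit justification of the two constancy statements (that $\trop(\ell)$ sees only the support, and that the $m_A$ are determined by the coincidence pattern of row-minima) is exactly the content the paper compresses into its "Observe that..." sentence.
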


\noindent
We call a toric vector bundle $\E$ corresponding to such a diagram $D$ a \emph{complete intersection} toric vector bundle or \emph{CI} toric vector bundle for short. For a homogenized linear relation $p_k \in \I$ we let $(\bc_k, 1) \in \CL(\P\E)$ denote the corresponding degree.

\begin{proposition}
If $\E$ is a $CI$ bundle with $\I = \langle p_1, \ldots, p_{m-r}\rangle$, then:\\

\[-K_{\P\E} = (K_\Sigma + \sum_{i =1}^m \bd_j - \sum_{k =1}^{m-r} \bc_k, r).\]\\

\end{proposition}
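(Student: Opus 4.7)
My plan is to realize $\P\E$ as a transverse complete intersection in the smooth ambient toric variety $\P\V_D$ via the neat embedding of Proposition~\ref{prop-neat}, compute the anticanonical class of $\P\V_D$ directly from its toric structure, and then conclude by adjunction.

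To begin, I observe that $\P\V_D$ is itself a smooth toric variety, since $\V_D = \bigoplus_{j=1}^m \O(D_j)$ is a split toric bundle on the smooth toric base $X(\Sigma)$. Its Cox ring is the polynomial ring $\C[X_1,\ldots,X_n,Y_1,\ldots,Y_m]$, graded by $\CL(\P\V_D)\cong\CL(X(\Sigma))\times\Z$ with $\deg(X_i)=(-\be_i,0)$ and $\deg(Y_j)=(\bd_j,1)$ (the same degree conventions recalled earlier for $\RR(\P\E)$, which make sense on $\P\V_D$ by Proposition~\ref{prop-neat}). The torus-invariant prime divisors of $\P\V_D$ are the $n+m$ vanishing loci $\{X_i=0\}$ and $\{Y_j=0\}$, so summing their classes gives the anticanonical class:
\[
-K_{\P\V_D} \;=\; \sum_{i=1}^n(-\be_i,0) + \sum_{j=1}^m(\bd_j,1) \;=\; \Bigl(K_\Sigma + \sum_{j=1}^m \bd_j,\; m\Bigr),
\]
where I identify $K_\Sigma=-\sum_i\be_i\in\CL(X(\Sigma))$ as the canonical class of the toric base.

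Next, I would invoke the CI hypothesis: by definition $\I=\langle p_1,\ldots,p_{m-r}\rangle$, with each $p_k$ homogeneous of degree $(\bc_k,1)$. Because $\P\E$ is smooth of codimension exactly $m-r$ in $\P\V_D$ and the number of generators of $\I$ equals this codimension, the Jacobian criterion forces the $p_k$ to be a regular sequence locally at every point of $\P\E$, so the intersection is transverse. The adjunction formula for a transverse complete intersection therefore applies and gives
\[
K_{\P\E} \;=\; \Bigl(K_{\P\V_D} + \sum_{k=1}^{m-r}(\bc_k,1)\Bigr)\Big|_{\P\E}.
\]
Negating, and using $\sum_{k=1}^{m-r}(\bc_k,1)=\bigl(\sum_k\bc_k,\,m-r\bigr)$ together with the expression for $-K_{\P\V_D}$ above, yields
\[
-K_{\P\E} \;=\; \Bigl(K_\Sigma + \sum_{j=1}^m\bd_j - \sum_{k=1}^{m-r}\bc_k,\; m-(m-r)\Bigr) \;=\; \Bigl(K_\Sigma + \sum_{j=1}^m\bd_j - \sum_{k=1}^{m-r}\bc_k,\; r\Bigr).
\]

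I do not expect a serious obstacle here: the content is really in the setup, namely recognizing $\P\V_D$ as a toric variety whose anticanonical class can be read directly off the Cox ring grading of Proposition~\ref{prop-neat}, and verifying transversality of the CI. Care is needed with signs, since the paper's convention $\deg(X_i)=(-\be_i,0)$ naturally produces the $+\sum\bd_j$ appearing in the claim (rather than $-\sum\bd_j$); but once the signs are tracked correctly the formula is a direct consequence of adjunction for complete intersections.
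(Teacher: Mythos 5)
Your proof is correct, and it is the argument the authors evidently have in mind (the paper states this proposition without proof, remarking only that the anticanonical class is ``easy to compute'' in the CI case): read off $-K_{\P\V_D}$ as the sum of the degrees of the Cox variables of the smooth toric variety $\P\V_D$, then apply adjunction along the codimension-$(m-r)$ complete intersection cut out by $p_1,\ldots,p_{m-r}$, using that $\CL(\P\V_D)\to\CL(\P\E)$ is an isomorphism by the neatness of the embedding. The only step worth making explicit is that $\I$, being the kernel of the surjection onto $\RR(\P\E)$, is the saturated homogeneous ideal of $\P\E$ in $\P\V_D$, so on each affine chart the ideal sheaf really is generated by the $m-r$ dehomogenized $p_k$; smoothness of $\P\E$ plus the Cohen--Macaulay property of the ambient local rings then makes these a regular sequence, justifying the adjunction formula exactly as you use it.
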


There are several interesting subclasses of CI toric vector bundles.  By requiring that additional conditions be satisfied by the pair $(L, D)$ we can simplify the verification of the hypothesis of Proposition \ref{thm-CIcondition}.  First we consider what happens when we take $L$ to be a very general linear ideal. 

\begin{definition}[Uniform Toric Vector Bundles]
We say a toric vector bundle $\E$ is \emph{uniform} if it is the toric vector bundle determined by a pair $(L, D)$ where $\MM(L)$ is a uniform matroid. 
\end{definition}

The Klyachko spaces of a uniform toric vector bundle $\E$ can be realized as flats of a uniform matroid $U^r_m$. The conditions of Proposition \ref{thm-CIcondition} simplify considerably when $\E$ is assumed to be uniform. 

\begin{theorem}\label{theorem:uniform CI condition}
Let $\E$ be a uniform toric vector bundle corresponding to the pair $(L, D)$, where $L$ is a very general linear ideal, then $\E$ is $CI$ if and only if for every collection $A \subset [n]$, the non-zero entries of the rows of $D$ corresponding to the indices in $A$ are contained in $r + |A| - 2$ columns. 
\end{theorem}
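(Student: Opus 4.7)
The plan is to specialize Theorem \ref{thm-CIcondition} using the structure of a very general linear ideal. Since $L$ is very general with $\MM(L) = U^r_m$, every $k$-subset of columns of the $(m-r) \times m$ coefficient matrix $M$ has rank $\min(k, m-r)$. After normalizing $D$ to non-negative form, the common minimum of the rows indexed by $A$ is $0$, and the columns at which this minimum is attained are $C(A) = \{j : D_{ij} = 0 \text{ for all } i \in A\}$. The matrix $M_A$ is then $M$ with the columns outside $C(A)$ zeroed out, so $m_A = \min(|C(A)|, m-r)$.

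Next I would use that every row $w_i$ of $D$ lies in $\Trop(L)$, which is the Bergman fan of $U^r_m$. A direct inspection of the maximal cones of this fan shows that, in non-negative form, every point has at most $r-1$ non-zero coordinates. This establishes the $|A|=1$ instance of the desired column-coverage bound and forces $m_{\{i\}} = m-r$ for every $i$.

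The third step is to translate the two inequalities of Theorem \ref{thm-CIcondition} into statements about $|C(A)|$. For $|A| \geq 2$, the second inequality applied with $B = A$ and any $i \in A$ becomes $1 + (m-r) < |A| + m_A$, i.e. $m_A \geq m - r - |A| + 2$; since $m - r - |A| + 2 \leq m - r$ for $|A| \geq 2$, the minimum in $m_A$ is attained by $|C(A)|$, so this is equivalent to $|C(A)| \geq m - r - |A| + 2$, which is precisely the claimed column-coverage bound. The first inequality for $|A| \geq 2$ then follows automatically, while the first inequality at $|A| = 1$ contributes only the necessary condition $m \geq 2r$. Conversely, assuming the column-coverage bound for every $A$, a direct substitution into $m_A = \min(|C(A)|, m-r)$ verifies both inequalities of Theorem \ref{thm-CIcondition} in each of the two regimes.

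The main obstacle is the case analysis forced by the saturation of $m_A$ at $m-r$: the CI inequalities take different shapes depending on whether $|C(A)| < m-r$ or $|C(A)| \geq m-r$, and matching them to the single column-coverage statement requires careful bookkeeping. A secondary subtlety is that the $|B| = 1$ instance of the second inequality is vacuous while the first inequality at $|A| = 1$ contributes only the necessary bound $m \geq 2r$, so the non-trivial $|A| = 1$ instance of the column-coverage condition must be extracted from the tropical constraint on $D$ rather than from the CI inequalities themselves.
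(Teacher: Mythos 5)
The paper itself records no proof of this theorem (it only remarks that the conditions of Theorem \ref{thm-CIcondition} ``simplify considerably''), so I can only judge your argument on its own terms; your overall strategy --- compute $m_A = \min(|C(A)|, m-r)$ from genericity of $M$, and get the $|A|=1$ bound of at most $r-1$ nonzero entries per row from the Bergman fan of $U^r_m$ --- is surely the intended route, and both of those computations are correct.

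There is, however, a genuine gap in how you handle the first inequality of Theorem \ref{thm-CIcondition}. Taking it literally, $r < |A| + m_A$ at $|A| = 1$ forces $m_{\{i\}} = m-r \geq r$, i.e.\ $m \geq 2r$, and you record this as a harmless ``necessary condition.'' But $m \geq 2r$ appears nowhere in the conclusion of Theorem \ref{theorem:uniform CI condition}, and it is flatly contradicted by the paper's own example $\T\P^2$ (uniform, CI, with $m = 3 < 4 = 2r$); one checks directly that $\T\P^2$ fails ``$r < |A| + m_A$'' at $|A|=1$ even though $\RR(\P\T\P^2)$ is visibly a complete intersection. So as written your argument proves that the stated biconditional is false. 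The resolution is that the inequality in Theorem \ref{thm-CIcondition} should be $m - r < |A| + m_A$ (this is the dimension count showing the stratum where $X_i = 0$ for $i \in A$, of dimension at most $(n-|A|) + (m - m_A)$, contributes no extra component of dimension $\geq n+r$), under which the $|A|=1$ instance holds automatically since $m_{\{i\}} = m-r$, and the $|A|\geq 2$ instances follow from the second inequality; the theorem then reduces exactly to your analysis of $|C(A)| \geq m - r - |A| + 2$. Relatedly, the $|B|=1$ instance of the second inequality is not ``vacuous'' as you claim but reads $1 + m_{\{i\}} < 1 + m_{\{i\}}$, which is false --- both points are signals that the quoted criterion has a transcription error that must be identified and corrected, not absorbed silently, before the specialization argument is sound.
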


\noindent
Here the condition that $L$ be very general means that $L$ is the kernel of a matrix with all non-vanishing minors. The condition for a uniform toric vector bundle to be a complete intersection toric vector bundle only depends on the location of the $0$ entries in $D$.  As a consequence, the faces of the fan $\F(\Sigma, L)$ which contain the diagrams of $CI$ uniform bundles can be described as certain $n$-fold products of closed faces from the Bergman fan $\Trop(U^r_m)$. 

A uniform bundle has the most tractable linear ideal $L$. The next class of toric vector bundle has a simple diagram $D$.

\begin{definition}[Sparse Toric Vector Bundles] \label{def-sparse}
We say that a toric vector bundle $\E$ is \emph{sparse} if it determined by a pair $(L, D)$ where there non-negative form of $D$ has at most one non-zero entry in each row.  
\end{definition}

By \cite[Corollary 6.7]{Kaveh-Manon-tvb} any sparse toric vector bundle $\E$ is a complete intersection toric vector bundle, and so has $\P\E$ a Mori dream space.  This class of toric vector bundles was first studied by Gonz\'alez, Hering, Payne, and S\"u\ss \ in \cite{GHPS}, where they were described as toric vector bundles with at most one step of codimension 1 in the Klyachko filtrations. The sparse toric vector bundles include tangent bundles of smooth toric varieties, first studied in \cite{Hausen-Suss}, and all rank $2$ toric vector bundles \cite{Gonzalez-rank2}.  The projectivization of a sparse toric vector bundle is an example of an \emph{arrangement variety} \cite{Cummings-Manon}, and the projectivizations of toric vector bundles which are both uniform and sparse are \emph{general arrangement varieties} \cite{Hausen-Hische-Wrobel}. 

\subsection{New CI bundles from old CI bundles}\label{sec-extension}

We describe a number of extension theorems, allowing us to form infinitely many new CI bundles of the same type. We describe these new bundles by adding columns to $D$. We suppose an $m-r \times m$ matrix $M$ has been given such that $L = \ker(M)$.  In particular, we take the rows of $M$ to be a minimal generating set of $L$. In what follows we consider the extension $(D', L')$ of a pair $(D, L)$, where $D' = [D \mid U]$ and $L' = ker(M')$ for $M' = [M \mid X]$. We let $\E$ be the toric vector bundle corresponding to the pair $(D, L)$ and let $\E'$ be the toric vector bundle corresponding to the pair $(D', L')$. For the sake of simplicity we assume that $D$ and $D'$ are non-negative. 

\begin{proposition}
    Let $\E$ be a CI toric vector bundle, and suppose that every entry of the $i$-th row of $U$ is larger than every entry in the $i$-th row of $D$, then $\E'$ is CI. 
\end{proposition}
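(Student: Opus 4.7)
The plan is to apply Theorem~\ref{thm-CIcondition} to the extended pair $(L', D')$, verifying its two numerical conditions. The central ingredient is that the size hypothesis on $U$ pins down exactly where the row-wise minima of $D'$ occur.

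The first step is the following structural observation: because every entry of the $i$-th row of $U$ strictly exceeds every entry of the $i$-th row of $D$, the minimum of row $i$ in $D'$ equals the minimum of row $i$ in $D$ and is attained at exactly the same set of columns, all of which lie in $[m]$. Consequently, for every nonempty $A \subseteq [n]$, the set of columns at which the rows of $D'$ indexed by $A$ share a common minimum coincides with the corresponding set for $D$ and is contained in $[m]$; none of the newly adjoined $U$-columns contributes. This gives the block decomposition
\[
(M')_A \;=\; [\,M_A \mid 0_{(m-r) \times s}\,],
\]
so that $m'_A = m_A$ for every nonempty $A$.

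Next I would feed this identity into the two inequalities of Theorem~\ref{thm-CIcondition} for $\E'$. The second inequality, $1 + m'_{\{i\}} < |B| + m'_B$, reduces verbatim to $1 + m_{\{i\}} < |B| + m_B$ and hence holds by the CI hypothesis on $\E$. For the first inequality, $r' < |A| + m'_A$, one writes $r' = r + s$ and $m'_A = m_A$ and matches the rank shift by $s$ against the $s$ newly added columns, whose contribution to each side of the count cancels in the same way as in the uniform situation (Theorem~\ref{theorem:uniform CI condition}), where both sides of the non-zero-entry count pick up the same $s$. Unwinding the definitions reduces the required bound to the CI inequality $r < |A| + m_A$ for $\E$.

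The main obstacle is the bookkeeping for the first inequality: since $(M')_A$ gains no rank from the $X$-columns, one must precisely identify the mechanism that compensates the added $s$ on the left. I expect this to amount to a direct combinatorial identity, obtained by examining the block structure of $M'$ alongside the way $r'$ is computed from $M'$ and the ambient dimension $m + s$, so that the CI inequality for $\E'$ can be rewritten purely in terms of the data $(L, D)$ already governed by the hypothesis on $\E$.
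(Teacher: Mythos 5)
Your opening structural observation is correct and is in fact the same key fact the paper exploits: because every entry of row $i$ of $U$ exceeds every entry of row $i$ of $D$, the minima (equivalently, the initial forms of the relations) are unaffected by the adjoined columns. The problem is the machinery you route it through. In the criterion of Theorem \ref{thm-CIcondition} applied to $(L',D')$, the relevant rank is $r' = r + s$ (the quotient $\C[y_1,\ldots,y_{m+s}]/L'$ gains one dimension per adjoined column, since the number of relations stays at $m-r$), while by your own computation $m'_A = m_A$. The first inequality you must verify is therefore $r + s < |A| + m_A$, which is strictly stronger than the hypothesis $r < |A| + m_A$ by exactly $s$, and nothing compensates: the new columns of $(M')_A$ are identically zero, so they contribute nothing to the right-hand side. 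The analogy with Theorem \ref{theorem:uniform CI condition} does not rescue this, because there the count that grows with the extension is the number of columns supporting nonzero entries, not the rank of a matrix whose new columns have been zeroed out. You correctly identify this as ``the main obstacle'' and defer it to an unnamed combinatorial identity; that deferred step is the entire content of the proof, and as you have set it up the arithmetic points the wrong way, so the argument does not close.

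The paper avoids this bookkeeping entirely by working with Proposition \ref{prop-sd1} rather than the numerical criterion. Let $p_k'$ be the homogenizations of the rows of $M'$. Since the new entries in each row of $D'$ are larger than the old ones, the initial form of each $p_k'$ with respect to each lifted row $\hat{w}_i$ coincides with that of $p_k$; hence the initial ideal of the presenting ideal of $\RR(\P\E')$ at each $\hat{w}_i$ is generated by the same initial forms as before and is therefore prime, which by Proposition \ref{prop-sd1} gives generation in $\Sym$-degree $1$, and the complete-intersection presentation by the $m-r$ relations $p_1',\ldots,p_{m-r}'$ follows since $(m+s)-(r+s) = m-r$. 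If you want to salvage your approach, you would need to revisit how the criterion of Theorem \ref{thm-CIcondition} transforms under extension rather than assert that the two sides ``cancel.''
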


\begin{proof}
    Let $p_1', \ldots, p_{m-r}'$ denote the homogenizations of the rows of $M'$ and $p_1, \ldots, p_{m-r}$ denote the homogenizations of the rows of $M$. For any row of $D'$, the initial forms of the $p_k'$ agree with those of the $p_k$.  It follows that the initial ideal $\In_{\hat{w}_i}(\I)$ is prime and generated by these initial forms, the theorem then follows from Proposition \ref{prop-sd1}
\end{proof}

When $\E$ is a uniform bundle it is more straightforward to find extensions. 

\begin{proposition}
    Let $\E$ be a uniform CI toric vector bundle defined by $M$ a very general matrix, and suppose that $M'$ defined by an $X$ which also makes $M'$ very general, then $\E'$ is a uniform CI bundle for any non-negative $D'$ extending $D$. 
\end{proposition}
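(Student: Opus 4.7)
The plan is to apply Theorem \ref{theorem:uniform CI condition}, so first I would pin down the matroid of $L'$ and the rank of $\E'$. Let $k$ denote the number of columns being appended, so that $X$ is $(m-r) \times k$ and $U$ is $n \times k$. Since $M' = [M \mid X]$ is very general, every collection of $m-r$ columns of $M'$ is linearly independent. A standard computation (identical to the one identifying $\MM(L)$ with $U^r_m$) then shows that a subset $I \subset \{y_1, \ldots, y_{m+k}\}$ is dependent in $\MM(L')$ exactly when $|I| > r + k$. Hence $\MM(L') = U^{r+k}_{m+k}$, confirming that $\E'$ is uniform and identifying its rank as $r' = r + k$.

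The heart of the argument is then to verify the column-count inequality from Theorem \ref{theorem:uniform CI condition} for the pair $(L', D')$. Fix $A \subset [n]$. Because $(L, D)$ defines a uniform CI bundle, the non-zero entries of the rows of $D$ indexed by $A$ lie inside some set of columns $S \subset [m]$ with $|S| \leq r + |A| - 2$. The rows of $D' = [D \mid U]$ indexed by $A$ can only acquire additional non-zero entries in the newly added columns $\{m+1, \ldots, m+k\}$, so these non-zero entries sit inside $S \cup \{m+1, \ldots, m+k\}$, a set of size at most
\[
(r + |A| - 2) + k \;=\; r' + |A| - 2.
\]
This is exactly the inequality that Theorem \ref{theorem:uniform CI condition} demands for $\E'$, so $\E'$ is a uniform CI toric vector bundle.

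There is no genuine obstacle; the content of the argument is simply the observation that adjoining $k$ columns to $M$ (while keeping $M'$ very general) raises the bundle rank by exactly $k$, so the right-hand side of the uniform CI inequality shifts by the same amount $k$ by which the left-hand side might shift. The non-negativity hypothesis on $D'$ is used only to ensure we are comparing the correct ``column supports'' in the sense of Theorem \ref{theorem:uniform CI condition}.
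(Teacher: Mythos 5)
Your proof is correct and takes essentially the same route as the paper: both reduce the statement to the column-count criterion of Theorem \ref{theorem:uniform CI condition}. The paper's own proof simply asserts that the conclusion is ``immediate,'' whereas you make explicit the bookkeeping behind that word --- appending $k$ columns raises the rank to $r+k$ while adding at most $k$ new columns that could carry non-zero entries, so both sides of the inequality shift by $k$.
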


\begin{proof}
    The diagram $D$ must satisfy the conditions of Theorem \ref{theorem:uniform CI condition}.  But it is then immediate that $D'$ also satisfies these conditions. 
\end{proof}

\begin{example}
Let $\E := \T\P^2$, the tangent bundle of $\P^2$. Then the corresponding $(L,D)$ can be taken to be $L~=~\langle y_0+y_1+y_2 \rangle \subset \C[y_0,y_1,y_2]$ and \\

$$
D = \begin{bmatrix}
    1 & 0 & 0\\
    0 & 1 & 0 \\
    0 & 0 & 1
\end{bmatrix}.
$$ \\

\noindent We observe that $\E$ is a uniform, sparse, and CI bundle. The bundle $\E$ can be extended to $\E'$ with the associated linear ideal $L' = \langle y_0 + y_1+y_2+y_3 \rangle \subset \C[y_0,y_1,y_2,y_3]$ and diagram: \\

$$
D' = \begin{bmatrix}
    1 & 0 & 0 & 1\\
    0 & 1 & 0 & 1\\
    0 & 0 & 1 & 1
\end{bmatrix}.
$$ \\

\noindent We then have the Cox ring:\\

\[\RR(\P\E') = \C[x_0, x_1, x_2, Y_0, Y_1, Y_2, Y_3]/\langle x_0Y_0 + x_1Y_1 + x_2Y_2 + x_0x_1x_2Y_3\rangle.\]\\

\noindent
This is recognizable as the Cox ring of the blow up of $3$ points on $\P^2$ which lie on a rational curve. 

\end{example}

\section{Newton-Okounkov bodies}\label{sec-noby}

In this section we use tropical methods to compute Newton-Okounkov bodies of  projectivized toric vector bundles.  First we compute the \emph{global} Newton-Okounkov body of the Cox ring, and then specialize to find Newton-Okounkov bodies of divisors.  The latter case includes the divisor $\O_\E(1)$, where the Newton-Okounkov body can be rightly thought of as the Newton-Okounkov body of $\E$ itself. 

For an introduction to Newton-Okounkov bodies see \cite{Lazarsfeld-Mustata, Kaveh-Khovanskii}.  In general, one starts with a positively graded ring $R$ equipped with a full-rank discrete valuation $\v$. The Newton-Okounkov body, a closed, compact, convex body $\Delta(R, \v)$ is assigned to this data.  When $R$ is the section ring of a divisor on a projective varieties, various properties of the associated linear series can be read off $\Delta(R, \v)$.  One can also consider a valuation on the Cox ring of a projective variety which is appropriately homogeneous with respect to the class-group grading, leading to information about how the Newton-Okounkov bodies of all divisors fit together.   

In \cite{Kaveh-Manon-NOK} it is shown that Newton-Okounkov bodies can be computed directly from a suitable tropical variety of the ring $R$.  We take this approach toward compute global Newton-Okounkov bodies of projectivized CI toric vector bundles $\P\E$. Accordingly, we begin with a description of the tropical variety of $\RR(\P\E)$ obtained from its presentation by the generators $x_1, \ldots, x_n, Y_1, \ldots, Y_m$.

\subsection{Tropicalization of the Cox Ring}

It is observed in \cite[Section 5]{Kaveh-Manon-tvb} that $\RR(\P\E)$ can be realized as a subalgebra of the Laurent polynomial ring $\Sym(E)[t_1^\pm, \ldots, t_n^\pm]$ by inverting $X_1, \ldots, X_n$.  Here $\Sym(E)$ is isomorphic to the quotient algebra $\C[y_1, \ldots, y_m]/L$.  Moreover, $\Phi$ fits into a commuting square:\\

$$
\begin{tikzcd}
\C[\bar{X}, \bar{Y}] \arrow[r, "\tilde{\Phi}"] \arrow[d]
& \C[y_1, \ldots, y_m, \bar{t}^\pm] \arrow[d] \\
\RR(\P\E) \arrow[r, "\Phi"]
& \Sym(E)[\bar{t}^\pm]
\end{tikzcd}
$$\\

\noindent
where $\tilde{\Phi}(X_i) = t_i^{-1}$ and $\tilde{\Phi}(Y_j) = y_jt^{D_j}$, and $D_j$ denotes the $j-$the column of the diagram $D$. Recall that $\I$ denotes the ideal in $\C[\bar{X}, \bar{Y}]$ which presents $\RR(\P\E)$. The following proposition summarizes the tropical consequences of this observation. 

\begin{proposition}\label{prop-tropstructure}
The map $\phi: \Q^{m+n} \to \Q^{m+n}$ given by\\

\[\phi(v_1, \ldots, v_m, m_1, \ldots, m_n) = (\ldots, v_j + \langle \bar{m}, D_j\rangle, \ldots, -m_1, \ldots, -m_n)\]\\

\noindent
induces a bijection between $\Trop(\I)$ and $\Trop(L)\times \Q^n$. 
 In particular, the projection map $\pi: \Q^m\times \Q^n \to \Q^m$ maps $\Trop(\I)$ onto $\Trop(L)$, and there is section $s: \Trop(L) \to \Trop(\I)$ of $\phi$ given by\\

\[s(v_1, \ldots, v_m) = (v_1, \ldots, v_m, 0, \ldots, 0).\]\\
\end{proposition}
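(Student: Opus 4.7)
The plan is to recognize $\tilde\Phi \colon \C[\bar X, \bar Y] \to \C[\bar y, \bar t^\pm]$ as a monomial map whose pullback on weight spaces is exactly the stated $\phi$, and then to extend $\tilde\Phi$ to an isomorphism of Laurent polynomial rings under which the ideal $\I$ corresponds to $\tilde L := L\cdot \C[\bar y, \bar t^\pm]$. From there the bijection between $\Trop(\I)$ and $\Trop(L)\times \Q^n$ follows from standard monomial-change-of-coordinates reasoning.

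First, I would encode the monomial data of $\tilde\Phi$ in an integer matrix $A$ whose column corresponding to $X_i$ is the $\bar t$-exponent of $t_i^{-1}$ and whose column corresponding to $Y_j$ is the exponent of $y_j t^{D_j}$:
\[
A = \begin{pmatrix} 0 & I_m \\ -I_n & D \end{pmatrix}.
\]
Here the first $m$ rows are the $\bar y$-exponents and the last $n$ rows are the $\bar t$-exponents. A block computation gives $|\det A| = 1$, so $A$ is unimodular; in particular distinct source monomials have distinct target images, so $\tilde\Phi$ is injective. The dual linear map $A^T$ on weight spaces sends a target weight $(v,m) \in \Q^m\times\Q^n$ to a source weight with $X_i$-component $-m_i$ and $Y_j$-component $v_j + \sum_i D_{ij}m_i = v_j + \langle \bar m, D_j\rangle$. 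Reading off in the coordinate order of the statement recovers the formula for $\phi$.

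Second, I would invert the $X_i$'s in the source to get a ring isomorphism $\tilde\Phi\colon \C[\bar X^\pm, \bar Y] \xrightarrow{\sim} \C[\bar y, \bar t^\pm]$. Surjectivity is immediate from the identities $\tilde\Phi(Y_j X^{D_j}) = y_j$ and $\tilde\Phi(X_i^{\pm 1}) = t_i^{\mp 1}$, and injectivity persists from unimodularity of $A$. The commuting square gives $\I = \tilde\Phi^{-1}(\tilde L)$, so under the isomorphism $\I\cdot \C[\bar X^\pm,\bar Y]$ corresponds to $\tilde L$. Since the $t_i$ are units, $\Trop(\tilde L) = \Trop(L)\times \Q^n$, and because isomorphisms of Laurent polynomial rings induce linear bijections on tropicalizations via the dual of the exponent map, $\phi$ carries $\Trop(L)\times \Q^n$ bijectively onto the tropicalization of $\I\cdot \C[\bar X^\pm, \bar Y]$. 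A short check that any Laurent monomial appearing in an initial form can be cleared of negative $X$-exponents by multiplying through by an invertible Laurent monomial shows this latter tropical variety coincides with $\Trop(\I)$.

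The remaining assertions are formal. Writing $\phi^{-1}(w^Y, w^X) = (w^Y + D^T w^X,\, -w^X)$, the projection $\pi$ onto the first $m$ coordinates of $\Trop(\I)$ is the $\Trop(L)$-projection of $\phi^{-1}$, hence is surjective; and $\phi(v, 0) = (v, 0)$ shows that $s(v) \in \Trop(\I)$ with $\pi\circ s = \mathrm{id}_{\Trop(L)}$. The main obstacle I anticipate is precisely the identification of $\Trop(\I)$ with the tropicalization of its extension to $\C[\bar X^\pm, \bar Y]$ — a technical point about conventions for initial ideals in polynomial vs.\ Laurent polynomial rings which, while standard, is the one non-formal step in the argument.
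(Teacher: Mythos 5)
Your proposal is correct, and it supplies in full the standard monomial-change-of-coordinates argument that the paper leaves implicit (the paper states this proposition without proof, as a ``summary'' of the observation that $\tilde\Phi$ is a unimodular monomial substitution identifying $\I\cdot\C[\bar{X}^{\pm},\bar{Y}]$ with the extension of $L$). One small inaccuracy: the first $m$ coordinates of $\phi^{-1}(w^Y,w^X)$ are $w^Y + D^Tw^X$, not $\pi(w^Y,w^X)=w^Y$, so $\pi$ is not literally ``the $\Trop(L)$-projection of $\phi^{-1}$''; this does not matter, since the surjectivity of $\pi|_{\Trop(\I)}$ onto $\Trop(L)$ already follows from your observation that $s(v)=(v,0)=\phi(v,0)$ lies in $\Trop(\I)$ with $\pi\circ s=\mathrm{id}$.
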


The rows $w_i$ of the diagram $D$ are themselves points in $\Trop(L)$, and define points $s(w_i) = \hat{w}_i \in \Trop(\I)$ by the map above.  These are the lifts in Proposition \ref{prop-sd1} which give the criterion for $\RR(\P\E)$ to be generated in $\Sym$-degree $1$. Prime points on tropical varieties come with an associated weight valuation by \cite[Section 4]{Kaveh-Manon-NOK}.  We let $\v_{\rho_i}: \RR(\P\E)\setminus\{0\} \to \Z$ be the weight valuation of $\hat{w}_i$.

\subsection{The global Newton-Okounkov body of $\P\E$}

We compute the global Newton-Okounkov body of $\P\E$ by following the procedure described in \cite{Kaveh-Manon-NOK}:\\

\begin{enumerate}
    \item Find a dimension $r+n$ prime cone $C \subset \Trop(\I)$ (with respect to the fan structure induced from the Gr\"obner fan of $\I$).
    \item Select linearly independent integral points $\bu = \{u_1, \ldots, u_{r+n}\} \subset C$.
    \item Compute $\Delta_{C, \bu} = M \circ \Q_{\geq 0}^{m + n}$, where $M = [u_1, \ldots, u_{r+n}]^t$. \\
\end{enumerate}

The matrix $M = [u_1, \ldots, u_{r+n}]^t$ can be used to define a rank $r+n$ discrete valuation $\v_M$ on $\RR(\P\E) = \C[\bar{X},\bar{Y}]/\I$ with value semigroup $S(\RR(\P\E), \v_M) = M \circ \Z_{\geq 0}^{m+n}$ \cite[Proposition 4.2]{Kaveh-Manon-NOK}. A different set of points $\bu' = \{u_1', \ldots, u_{r+n}'\}$ results in an isomorphic value semigroup, and linearly isomorphic convex body $\Delta_{C, \bu'}$.  Accordingly,  we suppress the choice $\bu$ in the notation, however, a careful choice of $\bu$ leads to clearer results.  

For any prime cone $C$ we may choose the preimages of the $n$ basis vectors of $0\times \Q^n \subset \Trop(L)\times \Q^n$ in $\Trop(\I)$ as the first $n$ rows of our matrix $M$. These are the vectors $R_i = (D_{i1}, \ldots, D_{im},0,\ldots,-1,\ldots,0)$, so the matrix formed by these rows is $[D, -I]$, where $D$ is the diagram of $\E$.

It remains to append $r$ more points from a particular $C \subset \Trop(\I)$.  To simplify matters, we always choose these elements in the image of the section $s: \Trop(L) \to \Trop(\I)$. The rows $R_i$ $1 \leq i \leq n$ account for the grading of $\RR(\P\E)$ by $\CL(X(\Sigma)) \cong \Pic(X(\Sigma))$ and the action of the torus $T$. We can now always pick $R_{n+1} = (1, \ldots, 1, 0, \ldots, 0)$ to account for the grading by $\{0\}\times \Z \subset \Pic(X(\Sigma)) \times \Z \cong \Pic(\P\E)$. This leaves us with the task of selecting $r-1$ more vectors of the form $(w_i, 0) \in C$.  This can be done in practice by computing a Gr\"obner basis of $\I$ with respect to the weight $(w, 0)$; this allows one to compute extremal rays of $C$ of the form $(w_i, 0)$.  The situation is best when $C$ is itself the preimage of a maximal face $K \subset \Trop(L)$. 

The tropical variety $\Trop(L)$ is the Bergman fan of the matroid $\MM(L)$.  Maximal faces of Bergman fans coincide with maximal flags of flats: $F_1 \supset F_2 \supset \cdots \supset F_r$. If $K$ is the face corresponding to the flag $F_1, \ldots, F_r$, then the indicator vectors $\be_{F_k}$ form the extremal rays of $K$. In particular $\be_{F_1}$ is the indicator vector of the whole matroid: the all $1$'s vector. We have already chosen this vector for $M$, we let the remaining choices be the remaining $r-1$ indicator vectors of the flats in the flag corresponding to $K$ in decreasing order of size.  We let $E_K$ denote the matrix with rows equal to $\be_{F_i}$.  The following proposition collects the conclusions of this discussion. 

\begin{proposition}\label{prop-preimagecone}
Let $C \subset \Trop(\I)$ be a prime cone of the form $C = \phi^{-1}K$ for a maximal face $K \subset \Trop(L)$.  Let $M$ be the matrix with rows $R_1, \ldots, R_{n+1}$, along with rows $R_{n + 1 + k} = (\be_{F_k}, 0)$ $1 \leq k \leq r-1$ for the flats $F_k$ associated to the face $K$:\\

\[M = \begin{bmatrix} D & -I \\ E_K & 0 \\  \end{bmatrix}. \]\\

\noindent
The global value semigroup $S(\RR(\P\E), \v_M)$ of $\P\E$ associated to $C$ is $M\Z_{\geq 0}^{n + m} \subset \Z^{n + r}$, and the global Newton-Okounkov body $\Delta_C$ is $M\circ \Q_{\geq 0}^{n + m} \subset \Q^{n + r}$.
\end{proposition}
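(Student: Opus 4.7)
The plan is to apply directly the three-step recipe stated just before the proposition, combined with the structural description of $\Trop(\I)$ in Proposition \ref{prop-tropstructure}. Since both the cone $C$ and the candidate integer basis (the rows of $M$) are already produced in the statement, the verification splits into three tasks: showing that $C$ is a prime cone of $\Trop(\I)$ of the correct dimension $n+r$, showing that the rows of $M$ are linearly independent lattice points of $C$, and invoking the Kaveh--Manon weight-valuation machinery to read off the value semigroup and the global Newton--Okounkov body.

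For the first task, note that $K$ is a maximal face of the Bergman fan $\Trop(L)$, so $\dim K = r$; because $\phi$ is a linear isomorphism of $\Q^{m+n}$ identifying $\Trop(\I)$ with $\Trop(L)\times \Q^n$, the preimage $C = \phi^{-1}(K\times \Q^n)$ is a face of $\Trop(\I)$ of dimension $n+r$, and primeness is a hypothesis. For the second task, I would apply the explicit formula $\phi(v,\bar m) = (v_j + \langle \bar m, D_j\rangle, -\bar m)$ to check that the $i$-th row $R_i = (D_{i1},\ldots,D_{im},0,\ldots,-1,\ldots,0)$ is precisely the $\phi$-preimage of $(0, e_i) \in K\times\Q^n$, while each $(\be_{F_k},0)$ is its own $\phi$-preimage via the section $s$ and lies in $C$ because $\be_{F_k}$ is an extremal ray of $K$. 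The block form
\[
M = \begin{bmatrix} D & -I \\ E_K & 0 \end{bmatrix}
\]
makes linear independence transparent: the $-I$ block prevents any nontrivial relation involving the first $n$ rows or mixing them with the last $r$, while the indicator vectors of a maximal flag of flats in a rank-$r$ matroid are known to be linearly independent in $\Q^m$.

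With these verifications in hand, one invokes \cite[Proposition 4.2]{Kaveh-Manon-NOK} on the prime cone $C$ with the chosen lattice points to produce the full-rank weight valuation $\v_M$ on $\RR(\P\E)$, identify its value semigroup with $M\Z_{\geq 0}^{m+n}$, and identify the corresponding global Newton--Okounkov body with $M\circ \Q_{\geq 0}^{m+n}$, exactly as claimed. The main obstacle is the bookkeeping that links the three moving pieces --- the matroid-theoretic description of $K$, the explicit formula for $\phi$ in Proposition \ref{prop-tropstructure}, and the abstract construction of valuations from prime tropical cones --- but no new ideas are required beyond carefully tracking the section $s : \Trop(L) \to \Trop(\I)$ used to lift the rays $\be_{F_k}$ into $\Trop(\I)$.
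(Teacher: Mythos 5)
Your proposal is correct and follows essentially the same route as the paper: the paper presents this proposition as a summary of the preceding discussion (the three-step recipe of \cite{Kaveh-Manon-NOK}, the identification of the rows $R_i$ as preimages of the basis vectors of $0\times\Q^n$, and the choice of the flag indicator vectors from $K$), and your write-up reproduces that discussion while making the $\phi$-preimage computation and the linear independence of the rows of $M$ explicit. No gaps.
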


For a general dimension $r+n$ prime cone $C$ more work must be done to populate the final $r-1$ rows of $M$.  In the case that $\RR(\P\E)$ is generated in $\Sym$-degree $1$, the descriptions of $S(\RR(\P\E), \v_M)$ and $\Delta_C$ are identical to those in Proposition \ref{prop-preimagecone}. We give several sufficient conditions for the assumptions of Proposition \ref{prop-preimagecone} to hold. 

\begin{proposition}\label{prop-NOKgeneralrow}
Suppose that $\E$ is a CI bundle, and that a row $w$ of the diagram $D$ is contained in the relative interior of a maximal face $K \subset \Trop(L)$, then $\phi^{-1}K = C \subset \Trop(\I)$ is a prime cone of $\I$.   
\end{proposition}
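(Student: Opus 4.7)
The plan is to show that $C = \phi^{-1}K$ is a face of the Gr\"obner fan of $\I$ whose associated initial ideal is prime. Since $\E$ is CI, Theorem \ref{thm-CIcondition} guarantees that $\RR(\P\E)$ is generated in $\Sym$-degree $1$ with $\I = \langle p_1, \ldots, p_{m-r}\rangle$, and Proposition \ref{prop-sd1} then yields that $\In_{\hat w}(\I)$ is prime for $\hat w := s(w) = (w,0)$. Via the bijection of Proposition \ref{prop-tropstructure}, the hypothesis $w \in \text{rel int}(K)$ translates to $\hat w \in \text{rel int}(C)$. So it suffices to check that the initial ideal at every interior point of $C$ equals the prime $\In_{\hat w}(\I)$.

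First I would argue that the initial forms $\In_{\hat w'}(p_k)$ are constant on $\text{rel int}(C)$. For a weight $\hat w' = (v', m') \in C$, the weight of the monomial $X^{D_j - \delta_k}Y_j$ in $p_k = \sum_j C_{kj} X^{D_j - \delta_k}Y_j$ equals $v'_j + \langle m', D_j\rangle - \langle m', \delta_k\rangle$. The last term is $j$-independent, so the minimizing indices are those minimizing $v'_j + \langle m', D_j\rangle$, which is the $j$-th coordinate of $\phi(\hat w')$. Since the first $m$ coordinates of $\phi(\hat w')$ lie in $\text{rel int}(K)$ --- a maximal open cell of the Bergman fan of $\MM(L)$ --- the set of minimizing indices within the support of $\ell_k = \sum_j C_{kj}y_j$ is fixed throughout $\text{rel int}(C)$, making $\In_{\hat w'}(p_k)$ constant there.

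Finally, I would upgrade constancy of initial forms to constancy of initial ideals. Let $J := \langle \In_{\hat w}(p_1), \ldots, \In_{\hat w}(p_{m-r})\rangle$. The inclusion $J \subseteq \In_{\hat w}(\I)$ always holds, so the codimension of $J$ is at least $m-r$; since $J$ is generated by $m-r$ elements, Krull's principal ideal theorem gives codimension exactly $m-r$, making $J$ a complete intersection with the generic Koszul Hilbert function. Flatness of the weight degeneration implies that $\In_{\hat w}(\I)$ shares this Hilbert function with $\I$, and combined with the inclusion this forces $J = \In_{\hat w}(\I)$. By the previous paragraph the same equality holds at every $\hat w' \in \text{rel int}(C)$, so $C$ is a single Gr\"obner cone with prime associated initial ideal. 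The main obstacle is the Hilbert function step in the last paragraph, which combines flatness of the weight degeneration with the CI structure of $\I$ to pin down the Hilbert function of the initial-form ideal.
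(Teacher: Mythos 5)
Your first two steps are sound: primality of $\In_{\hat w}(\I)$ at the lift $\hat w = s(w)$ does follow from Theorem \ref{thm-CIcondition} together with Proposition \ref{prop-sd1}, and your observation that the initial forms $\In_{\hat w'}(p_k)$ are constant on the relative interior of $C$ (the minimizing indices being read off the fixed flag of flats defining $K$) is correct and actually supplies a detail that the paper's one-line proof leaves implicit.

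The gap is in the final step. From $J \subseteq \In_{\hat w}(\I)$ you conclude that $\operatorname{codim}(J) \geq m-r$, but containment of ideals gives the \emph{opposite} inequality: $V(J) \supseteq V(\In_{\hat w}(\I))$, so $\operatorname{codim}(J) \leq \operatorname{codim}(\In_{\hat w}(\I)) = m-r$. Krull's principal ideal theorem likewise only gives $\operatorname{codim}(J)\le m-r$, and two upper bounds do not combine to an equality. So you have not shown that $\In_{\hat w}(p_1),\dots,\In_{\hat w}(p_{m-r})$ form a regular sequence, which is exactly what is needed for $S/J$ to have the Koszul Hilbert function and hence for $J = \In_{\hat w}(\I)$. (A toy example of the failure mode: $(xy,xz)\subseteq (y,z)$ in $\C[x,y,z]$ is a two-generator ideal contained in a codimension-two prime, yet it has a codimension-one component.) This regular-sequence statement --- equivalently, that $p_1,\dots,p_{m-r}$ form a Gr\"obner basis for $\I$ at $\hat w$ --- is the genuinely nontrivial content of the proposition; it is what the combinatorial inequalities of Theorem \ref{thm-CIcondition} encode, and the paper obtains it by citing \cite[Corollary 6.7]{Kaveh-Manon-tvb}, which asserts directly that $\In_{s(w)}(\I) = \langle \In_{s(w)}(p_1),\dots,\In_{s(w)}(p_{m-r})\rangle$ and is prime for a CI bundle (the paper then gets maximality of the cone from binomiality of these initial forms, where you instead use your constancy argument plus a dimension count --- either is fine once the key fact is in hand). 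To repair your proof, either invoke that corollary at the base point $\hat w$, after which your second step correctly propagates the conclusion over the relative interior of $C$, or give an actual derivation of the regular-sequence property from the CI conditions.
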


\begin{proof}
By \cite[Corollary 6.7]{Kaveh-Manon-tvb}, the initial ideal $\In_{s(w)}(\I)$ is the ideal $\langle \ldots, \In_{s(w)}(p_k),\ldots \rangle$ generated by the initial forms of the $p_k$, and is prime. Moreover, by assumption each $\In_{s(w)}(p_k)$ is binomial, so $C$ must be a maximal face of $\Trop(\I)$. 
\end{proof}

The case covered in Proposition \ref{prop-NOKgeneralrow} is particularly nice because the indicator vectors $\be_{F_k}$ of the flag corresponding to $K$ can be read off the ``jumps" in the row vector $w$. Any uniform CI toric vector bundle whose diagram contains a row with distinct non-zero entries satisfies Proposition \ref{prop-NOKgeneralrow}.  Next we see that the any sparse bundle satisfies the hypotheses of Lemma \ref{prop-preimagecone}.

\begin{proposition}\label{prop-sparsewp}
Suppose that $\E$ is a sparse toric vector bundle, then every point of $\Trop(\I)$ is a prime point. 
\end{proposition}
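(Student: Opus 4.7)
The plan is to realize $\In_{(v,m)}(\I)$ as the kernel of a ring homomorphism into a domain, from which primeness will follow automatically. Given $(v,m) \in \Trop(\I)$, set $w := v + D^{T} m$; by Proposition~\ref{prop-tropstructure} this lies in $\Trop(L)$. Since $\E$ is sparse, \cite[Cor.~6.7]{Kaveh-Manon-tvb} presents $\I$ as a complete intersection with generators $p_k$ coming from a minimal linear generating set $\ell_k = \sum_{j \in J_k} C_{k,j} y_j$ of $L$. First I would verify that sparsity forces the correction exponent $\delta_k$ appearing in $p_k = \sum_j C_{k,j} X^{D_j - \delta_k} Y_j$ to vanish: for each row $i$ and each $k$ with $|J_k| \geq 2$, at most one column of row $i$ of $D$ is nonzero, so some $j \in J_k$ has $D_{ij} = 0$, making the componentwise minimum zero. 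Hence $p_k = \sum_{j \in J_k} C_{k,j} X^{D_j} Y_j$, and each monomial $X^{D_j} Y_j$ has $(v,m)$-weight $v_j + \langle m, D_j\rangle = w_j$. Writing $J_k^{\min} \subseteq J_k$ for the indices at which $w_j$ is minimal,
\[
\tilde p_k := \In_{(v,m)}(p_k) = \sum_{j \in J_k^{\min}} C_{k,j} X^{D_j} Y_j,
\]
whose support agrees with that of $\In_w(\ell_k) \in \In_w(L)$.

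Next I would introduce the auxiliary ring map
\[
\Phi' \colon \C[\bar{X}, \bar{Y}] \longrightarrow \bigl(\C[\bar{y}]/\In_w(L)\bigr)[\bar{t}^{\pm}], \qquad X_i \mapsto t_i^{-1}, \quad Y_j \mapsto \bar{y}_j t^{D_j},
\]
the natural analogue of $\Phi$ with $L$ replaced by $\In_w(L)$. Since $\In_w(L)$ is a linear (hence prime) ideal, the target is a Laurent polynomial ring over a polynomial ring, so it is a domain and $\ker(\Phi')$ is prime. Each $\tilde p_k$ lies in $\ker(\Phi')$ since its image equals $\In_w(\ell_k)$. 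The inclusion $\In_{(v,m)}(\I) \subseteq \ker(\Phi')$ follows from functoriality of initial ideals under the injective map $\tilde{\Phi}$ of Proposition~\ref{prop-tropstructure}, which intertwines the weight $(v,m)$ with the weight $(w,-m)$ on the target and satisfies $\In_{(w,-m)}(L \cdot \C[\bar{y}, \bar{t}^{\pm}]) = \In_w(L) \cdot \C[\bar{y}, \bar{t}^{\pm}]$.

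The heart of the argument will be the reverse inclusion $\ker(\Phi') \subseteq \In_{(v,m)}(\I)$, which I would reduce to showing $\ker(\Phi') = \langle \tilde p_k : 1 \leq k \leq m-r \rangle$ and then conclude from $\tilde p_k \in \In_{(v,m)}(\I)$. The key observation is that the algebraic pair $(\In_w(L), D)$ still satisfies the hypotheses underlying the sparse CI argument of \cite[Cor.~6.7]{Kaveh-Manon-tvb}: the diagram $D$ is unchanged and still sparse, and $\In_w(L)$ remains linear. The ring-theoretic content of that corollary then yields the required generation of $\ker(\Phi')$. The main obstacle will be this last step, since $(\In_w(L), D)$ may fail the geometric apartment conditions defining an honest toric vector bundle; I would therefore need to isolate the portion of the proof of \cite[Cor.~6.7]{Kaveh-Manon-tvb} that depends only on the combinatorial sparsity of $D$ and the linearity of the defining ideal, and verify that it transfers to this degenerate algebraic setting.
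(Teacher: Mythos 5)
Your argument is essentially the paper's: both proofs hinge on the observation that sparsity of $D$ makes the complete-intersection criterion of Theorem \ref{thm-CIcondition} apply to the degenerate pair $(\In_w(L), D)$, yielding a prime complete-intersection ideal generated by the initial forms $\tilde p_k$ and contained in $\In_{(v,m)}(\I)$. The only divergence is the closing step — the paper deduces equality of the two ideals from a height comparison, whereas you prove the reverse containment directly via the auxiliary map $\Phi'$ — and both versions rest on the same delicate point you rightly flag, namely that the CI criterion transfers to $(\In_w(L), D)$ even though this pair need not define an honest toric vector bundle.
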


\begin{proof}
A consequence of the condition in Theorem \ref{thm-CIcondition} is that the ideal $\I$ is prime. If $D$ is the diagram of a sparse toric vector bundle, the conditions of Theorem \ref{thm-CIcondition} is satisfied for any linear ideal not containing a monomial. For any $s(w) \in \Trop(\I)$, the complete intersection ideal induced from $\In_w(L)$ is easily shown to be contained in $\In_{s(w)}(\I)$. The former ideal is prime, and both ideals have the same height, so we must conclude that the two ideals coincide. 
\end{proof}

Proposition \ref{prop-sparsewp} shows that the ideal $\I$ which presents the Cox ring $\RR(\P\E)$ is \emph{well-poised}, \cite{Ilten-Manon,Cummings-Manon}. This can also be seen as a consequence of the fact that the projectivization $\P\E$ of a sparse toric vector bundle is an \emph{arrangement variety}, \cite{Cummings-Manon}.

\subsection{Newton-Okounkov bodies of divisors}

Now we describe the Newton-Okounkov bodies of the section rings of the divisors on $\P\E$. For $(\alpha, \beta) \in \CL(\P\E)$ we recall the polytope $P_{\alpha, \beta}$ whose lattice points define a basis of the corresponding section space over $\P\V_D$.

\begin{proposition}\label{prop-NOKbodies}
Let $\E$ be a toric vector bundle which satisfies the hypotheses of Proposition \ref{prop-preimagecone}, let $(\alpha, \beta) \in \CL(\P\E)$, and let $C \subset \Trop(\I)$ be a prime cone with matrix $M$ as above, then the Newton-Okounkov body $\Delta_C(\alpha, \beta)$ is $M\circ P_{\alpha, \beta} \subset \Q^{n+r}$. 
\end{proposition}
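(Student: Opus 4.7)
The plan is to derive $\Delta_C(\alpha,\beta)$ by slicing the global Newton--Okounkov body $\Delta_C = M\circ \Q_{\geq 0}^{n+m}$ from Proposition \ref{prop-preimagecone} over the class $(\alpha,\beta)\in\CL(\P\E)$. For a class-group-graded ring equipped with a class-group-homogeneous full-rank valuation, this fiberwise description is the standard route from the global body to the divisor-class bodies (see \cite{Kaveh-Manon-NOK}), so the work reduces to identifying the grading map in terms of $M$ and identifying its fiber over $(\alpha,\beta)$.

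First I would inspect the top $n+1$ rows of $M$. For an exponent vector $(b,a)\in\Z_{\geq 0}^{m+n}$ corresponding to a monomial $X^aY^b$, the rows $R_1,\ldots,R_n$ record $\sum_j D_{ij}b_j - a_i$, which is a $T$-linearization of the class $\sum_j b_j\bd_j - \sum_i a_i\be_i$, while $R_{n+1}$ records the $\Sym$-degree $\sum_j b_j$. Composing with the quotient $\Z^n \twoheadrightarrow \CL(X(\Sigma))$, the first $n+1$ coordinates of $Mv$ recover precisely the grading map $\deg\colon \Q^{n+m} \to \CL(\P\E)\otimes \Q$ appearing in the definition of $P_{\alpha,\beta}$. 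By construction, the preimage of $(\alpha,\beta)$ under $\deg$ inside $\Q_{\geq 0}^{n+m}$ is $P_{\alpha,\beta}$, so the fiber of $\Delta_C$ over $(\alpha,\beta)$ is $M\circ P_{\alpha,\beta}$. Well-definedness under the ambiguity $\Z^n\twoheadrightarrow\CL(X(\Sigma))$ is automatic: translating a representative of $\alpha$ by a character $m\in M$ shifts $P_{\alpha,\beta}$ by $(m,0)$, which $M$ sends to an integral translation of $M\circ P_{\alpha,\beta}$.

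The main obstacle is confirming that this set-theoretic slice is genuinely the Newton--Okounkov body of $(\alpha,\beta)$ on $\P\E$, rather than on the ambient split bundle $\P\V_D$ whose section spaces are indexed directly by the lattice points of $P_{\alpha,\beta}$. Here one uses the prime-cone hypothesis of Proposition \ref{prop-preimagecone}: $\v_M$ descends to a full-rank valuation on $\RR(\P\E) = \C[\bar X,\bar Y]/\I$ whose global value semigroup is exactly $M\Z_{\geq 0}^{n+m}$, so the graded piece in degree $(\alpha,\beta)$ of the value semigroup equals $M\bigl(P_{\alpha,\beta}\cap \Z^{n+m}\bigr)$. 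Combined with the neat embedding $\P\E\hookrightarrow\P\V_D$ of Proposition \ref{prop-neat}, which guarantees that the surjection $H^0(\P\V_D,\O(\alpha,\beta))\twoheadrightarrow H^0(\P\E,\O(\alpha,\beta))$ carries the lattice-point basis onto a spanning set compatible with $\v_M$, taking the closed convex hull of this graded value semigroup yields $M\circ P_{\alpha,\beta}$, establishing the proposition.
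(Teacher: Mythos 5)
Your proposal is correct and follows essentially the same route as the paper: the paper's proof is the one-line observation that $M\circ P_{\alpha,\beta}$ is the fiber of the global body $\Delta_C$ over $(\alpha,\beta)$ under the projection to $\Eff(\P\E)$, which is exactly the slicing argument you carry out (you simply make explicit the identification of the grading with the top rows of $M$ and the role of the prime-cone hypothesis in ensuring the value semigroup of $\RR(\P\E)$ itself is $M\Z_{\geq 0}^{n+m}$).
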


\begin{proof}
As constructed, $M\circ P_{\alpha, \beta}$ is the fiber above $(\alpha, \beta) \in \CL(\P\E)$ under the projection $\Delta_C \to \Eff(\P\E)$.
\end{proof}

\subsection{The Newton-Okounkov body of $\E$}

By the formula $H^0(X(\Sigma), \E) \cong H^0(\P\E, \O_\E(1))$ it is reasonable to regard the Newton-Okounkov body of the section ring of the line bundle $\O_\E(1)$ as the Newton-Okounkov body of the bundle $\E$. The divisor class of this bundle is $(0, 1) \in \CL(\P\E) \cong \CL(X(\Sigma))\times \Z$. Accordingly, we let $\Delta_C(\E)$ denote the Newton-Okounkov body of $\O(0, 1)$. 

Everything in this section on the global Newton-Okounkov body of $\P\E$ holds irrespective of the diagram we have chosen to represent the projectivization, however now some care must be taken as we are considering the vector bundle $\E$ itself in order to make use of Proposition \ref{prop-nobycayley}.  We assume that each column $D_j$ of the diagram defines an effective divisor class $\bd_j \in \CL(X(\Sigma))$.  We let $CP(\E)$ denote the Cayley polytope obtained by placing $\Delta(\bd_j)$ above the $j$-the vertex of the standard $m-1$ simplex. 

\begin{proposition}\label{prop-noby}
Let $\E$ be a toric vector bundle with $\RR(\P\E)$ generated in $\Sym$-degree $1$, suppose that $\bd_j$ effective for all $1 \leq j \leq m$, and suppose that there is maximal face $K \subset \Trop(L)$ such that $\pi^{-1}K \subset \Trop(\I)$ is a prime cone, then the Newton-Okounkov body $\Delta(\E)$ is the linear image of $CP(\E)$ under $M\circ\psi: M_\Q \times \Q^m \to \Q^{m + r}$.
\end{proposition}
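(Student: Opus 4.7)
The strategy is to combine Proposition \ref{prop-NOKbodies} with Proposition \ref{prop-nobycayley} applied to the specific class $(\alpha, \beta) = (0, 1) \in \CL(\P\E)$, which corresponds to $\O_\E(1)$. Since $\Delta(\E) = \Delta_C(\O_\E(1)) = \Delta_C(0,1)$, our first step is to invoke Proposition \ref{prop-NOKbodies} to conclude
\[
\Delta(\E) \;=\; M \circ P_{0, 1}.
\]
This requires the hypotheses of Proposition \ref{prop-preimagecone}, which are supplied by the hypothesis that there is a maximal face $K \subset \Trop(L)$ whose preimage $\pi^{-1}K \subset \Trop(\I)$ is prime, together with the standing assumption that $\RR(\P\E)$ is generated in $\Sym$-degree $1$.

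Next I would specialize Proposition \ref{prop-nobycayley} to the case $(\alpha, \beta) = (0, 1)$. For this we need $\beta \bd_j + \alpha = \bd_j$ to be effective for all $1 \leq j \leq m$, which is exactly the hypothesis we have placed on the $\bd_j$. The proposition then asserts that $P_{0,1}$ is carried isomorphically by $\psi$ onto the generalized Cayley polytope formed by stacking $\Delta(\bd_j)$ above the vertex $e_j$ of the standard $(m-1)$-simplex. By definition this polytope is precisely $CP(\E)$, so
\[
\psi \circ P_{0, 1} \;=\; CP(\E), \qquad \psi|_{P_{0,1}} \text{ is an isomorphism}.
\]

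Composing the two identifications gives the desired description: using the splitting $\Z^n \cong M \oplus \CL(X(\Sigma))$, one pulls $CP(\E) \subset M_\Q \times \Q^m$ back through $\psi^{-1}$ into $\Q^{n+m}$ and then applies $M$, yielding $\Delta(\E)$ as the linear image of $CP(\E)$ under $M \circ \psi$ (interpreted via the section of $\psi$ determined by the splitting).

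There is essentially no obstacle beyond bookkeeping: the two preceding propositions do the real work, and the only thing to verify is the translation from the data $(\alpha, \beta) = (0, 1)$ to the Cayley polytope $CP(\E)$, together with the observation that the hypothesis ``$\beta \bd_j + \alpha$ effective for all $j$'' reduces to ``$\bd_j$ effective for all $j$'' in this case. The mildest care is required in interpreting $M \circ \psi$ on the domain $M_\Q \times \Q^m$, where $\psi$ is implicitly the inverse of the isomorphism $\psi|_{P_{0,1}}$ extended linearly via the chosen splitting.
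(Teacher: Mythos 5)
Your proposal is correct and takes essentially the same route as the paper, whose entire proof is the single line ``apply Proposition \ref{prop-nobycayley} to $P_{0,1}$'' (with Proposition \ref{prop-NOKbodies} implicitly supplying $\Delta(\E) = M\circ P_{0,1}$). Your additional remarks on verifying the effectivity hypothesis at $(\alpha,\beta)=(0,1)$ and on interpreting $M\circ\psi$ via the inverse of $\psi|_{P_{0,1}}$ are just careful bookkeeping that the paper leaves implicit.
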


\begin{proof}
This follows from applying Proposition \ref{prop-nobycayley} to $P_{0, 1}$. 
\end{proof}

\begin{example}\label{ex-tangentbundlenoby}

Let $X(\Sigma)$ be a smooth, projective toric variety, then the tangent bundle $\T X(\Sigma)$ is the sparse toric vector bundle with $D = I_n$ and linear ideal $L_\Sigma$ the ideal of relations which hold among the integral ray generators of the fan $\Sigma$. We apply Propositions \ref{prop-noby} and \ref{prop-NOKgeneralrow} to compute a global Newton-Okounkov body of $\P\T X(\Sigma)$ and the Newton-Okounkov body $\Delta(\T X(\Sigma))$.  

To build a suitable flag of flats it suffices to pick any maximal face $\sigma \in \Sigma$ and a total ordering of the rays $\sigma(1) = \{\rho_1, \ldots, \rho_d\}$.  We let $F_k \subset \MM(L_\Sigma)$ be the span of the first $k$ integral ray generators with respect to this ordering; then $F_1,\ldots, F_d = \MM(L_\Sigma)$ forms a full flag of flats. Let $M_\sigma: \Q^{n+m} \to \Q^{n+r}$ be the corresponding matrix as in Proposition \ref{prop-NOKgeneralrow}.  We then obtain a global Newton-Okounkov body $\Delta_\sigma = M_\sigma \Q_{\geq 0}^{n+m}$.

To compute $\Delta_\sigma(\T X(\Sigma))$ it only remains to describe $CP(\T X(\Sigma))$.  This Cayley polytope is formed from the polytopes $\Delta(\be_i)$ associated to the prime toric divisors of $X(\Sigma)$. The Newton-Okounkov body $\Delta_\sigma(\T X(\Sigma))$ is then $M_\sigma \circ \psi CP(\T X(\Sigma))$ as in Proposition \ref{prop-noby}.
\end{example}

\begin{example}\label{ex-Hilb2}

The procedure of this section for computing Newton-Okounkov bodies of projectivized toric vector bundles still applies with $\RR(\P\E)$ is finitely generated, but possibly by generaters of higher $\Sym$-degree.  We illustrate these constructions with the $2$nd symmetric power of the tangent bundle of $\P^2$.

The tangent bundle $\T\P^2$ can be constructed with $L = \langle y_0 + y_1 + y_2\rangle$ and $D = I_3$, the $3\times 3$ identity matrix. In particular, $\T\P^2$ is a sparse bundle. Now let $\Sym^2\T\P^2$ denote the $2$nd symmetric power of $\T\P^2$, this bundle can built with the following linear ideal and diagram:\\

\[\langle y_{11} +y_{12}+ y_{13}, y_{12}+y_{22}+y_{23}, y_{13}+y_{23} + y_{33} \rangle \subset \C[y_{12}, y_{13}, y_{23}, y_{11}, y_{22}, y_{33}],\]

\[D= \begin{bmatrix}
1 & 1 & 0 & 2 & 0 & 0\\
1 & 0 & 1 & 0 & 2 & 0\\
0 & 1 & 1 & 0 & 0 & 2\\
\end{bmatrix}\].\\

The corresponding elements $X_1, X_2, X_3, Y_{12}, Y_{13}, Y_{23}, Y_{11}, Y_{22}, Y_{33} \subset \RR(\P\Sym^2\T\P^2)$ do not form a generating set. However, after an application of \cite[Algorithm 5.6]{Kaveh-Manon-tvb} we find a single new generator $Z \in \RR(\P\Sym^2\T\P^2)$. The Cox ring $\RR(\P\Sym^2\T\P^2)$ is then generated by $X_1, X_2, X_3$ $Y_{12}, Y_{13}, Y_{23}, Y_{11}, Y_{22}, Y_{33}, Z$ subject to the ideal $\I$ generated by:  

\[X_1Y_{13}+X_2Y_{23}+X_3Y_{33}, \ \ X_2Y_{12}+X_1Y_{11}+X_3Y_{13}, \ \ X_1Y_{12}+X_2Y_{22}+X_3Y_{23},\] 

\[X_1X_2 Z+Y_{13}Y_{23}-Y_{12}Y_{33}, \ \ X_2X_3 Z+Y_{12}Y_{13}-Y_{23}Y_{11}, \ \ X_1X_3 Z+Y_{12}Y_{23}-Y_{13}Y_{22},\] 

\[X_2^2 Z-Y_{13}^2+Y_{11}Y_{33}, \ \ X_2^2 Z-Y_{12}^2+Y_{11}Y_{22}, \ \ X_1^2 Z-Y_{23}^2+Y_{22}Y_{33},\]

\[2Y_{12}Y_{13}Y_{23}-Y_{23}^2Y_{11}-Y_{13}^2Y_{22}-Y_{12}^2Y_{33}+Y_{11}Y_{22}Y_{33}.\]\\

\noindent
We extend the diagram $D$ by adding a column for the generator $Z$:

\[D'= \begin{bmatrix}
1 & 1 & 0 & 2 & 0 & 0 & 2\\
1 & 0 & 1 & 0 & 2 & 0 & 2\\
0 & 1 & 1 & 0 & 0 & 2 & 2\\
\end{bmatrix}\].\\

\noindent
We use Macaulay2 \cite{m2} to compute the tropical variety $\Trop(\I)$, and test each maximal face for prime points.  This results in $18$ out of the $20$ maximum dimensional cones being prime. Each of these $18$ cones has its own corresponding global Newton-Okounkov body of $\P\Sym^2\T\P^2$.  

To form the matrix $M$ for one of these prime cones, we begin with $[D', -I]$, and append a row which records the $\Sym$-degree of each generator. These rows correspond to grading of $\RR(\P\Sym^2\T\P^2)$ by $\Z^3\times \Z$, where $\Z^3$ can be viewed as $\CL(\P^2)$ crossed with the character lattice of $(\C^*)^2$. In particular, the rows of the resulting matrix belong to any maximal face of $\Trop(\I)$.  We finish the $M$ matrix by appending two more rows corresponding to a choice of particular prime cone $C$:

\[M = \begin{bmatrix}
1 & 1 & 0 & 2 & 0 & 0 & 2 & -1 & 0 & 0\\
1 & 0 & 1 & 0 & 2 & 0 & 2 & 0 & -1 & 0\\
0 & 1 & 1 & 0 & 0 & 2 & 2 & 0 & 0 & -1\\
1 & 1 & 1 & 1 & 1 & 1 & 2 & 0 & 0 & 0\\
4 & 4 & 4 & 4 & 4 & 4 & 17 & 0 & 0 & 0 \\
10 & 10 & 0 & 20 & 0 & 0 & 27 & 0 & 0 & 0\\    
\end{bmatrix}.\]\\

\noindent
The $\Q_{\geq 0}$-span of the columns of $M$ is the global Newton-Okounkov body $\Delta_C$. 

Now for $(\alpha, \beta) \in \Z \times \Z \cong \CL(\P\Sym^2\T\P^2)$ the polytope $P_{\alpha, \beta} \subset \Q^{6 + 1 + 3}$ is the collection of tuples $(y_{12}, y_{13}, y_{23}, y_{11}, y_{22}, y_{33}, z, x_1, x_2, x_3) \in \Q_{\geq 0}^{6 + 1 + 3}$ satisfying:\\

\[2(y_{12}+ y_{13}+ y_{23}+ y_{11}+ y_{22} + y_{33}) + 6z -x_1 -x_2 -x_3 = \alpha,\]
\[ y_{12}+ y_{13}+ y_{23}+ y_{11}+ y_{22} + y_{33} + 2z = \beta.\]\\

\noindent
We still have $\Delta_C(\alpha, \beta) = M\circ P_{\alpha, \beta}$ is the Newton-Okounkov body of $(\alpha, \beta) \in \CL(\P\Sym^2\T\P^2)$.
\end{example}

\section{Positivity properties}\label{sec-positivity}

In this section we study the pseudoeffective cone $\PsEff(\P\E)$, the effective monoid $\Eff(\P\E)$, the Nef cone (semiample cone) $\Nef(\P\E)$, and the basepoint-free monoid $\Bpf(\P\E)$ of a projectivized toric vector bundle $\P\E$.  Throughout, the spaces we deal with will be Mori dream spaces, usually projectivized toric vector bundles with $\RR(\P\E)$ generated in $\Sym$-degree $1$.  Working with this class of varieties has two useful consequences.  First, the semiample cone coincides with the Nef cone, and second, the cones we deal with are determined by the monoids in $\CL(\P\E)$ generated by the classes of certain generators of $\RR(\P\E)$.  

Let $X$ be a smooth Mori dream space with free class group $\CL(X)$.  For a point $p \in X$ we let $S_p \subset \CL(X)$ denote the monoid of classes $\bd \in \CL(X)$ which carry a section $s \in H^0(X, \O(\bd))$ with $s(p) \neq 0$. We let $C_p = \Q_{\geq 0} S_p \subset \CL(X)\otimes \Q$.  The following is straightforward, but is essential for making the computation of $\Nef(X)$ tractable. 

\begin{proposition}
With $X$ as above, suppose that $f_1, \ldots, f_m \in \RR(X)$ are a homogeneous generating set of the Cox ring, then we have $S_p = \Z_{\geq 0}\{\deg(f_i)\mid f_i(p) \neq 0\}$.  Moreover,\\

\[\Bpf(X) = \bigcap_{p \in X} S_p\]
\[\Nef(X) = \bigcap_{p \in X} C_p\]\\
\end{proposition}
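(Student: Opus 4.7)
The first equality is the substantive point; the two cone identities then follow by unwinding definitions together with standard facts about Mori dream spaces. I will take the three assertions in order.

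For the equality $S_p = \Z_{\geq 0}\{\deg(f_i)\mid f_i(p) \neq 0\}$, the containment $\supseteq$ is immediate: a product of generators not vanishing at $p$ is itself a section not vanishing at $p$, so any $\Z_{\geq 0}$-combination of such degrees lies in $S_p$. For $\subseteq$, suppose $\bd \in S_p$ and pick $s \in H^0(X,\O(\bd))$ with $s(p) \neq 0$. Via the Cox construction, lift $p$ to a point $\tilde p$ in the semistable locus of $\Spec(\RR(X))$. Since $\{f_i\}$ generates $\RR(X)$, write $s = \sum_\alpha c_\alpha f^\alpha$ as a polynomial, where each monomial $f^\alpha = f_1^{\alpha_1}\cdots f_m^{\alpha_m}$ has $\CL(X)$-degree $\bd$. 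Because $s(\tilde p) \neq 0$, at least one term with $c_\alpha \neq 0$ satisfies $f^\alpha(\tilde p) \neq 0$, which forces $f_i(\tilde p) \neq 0$ whenever $\alpha_i > 0$, i.e.\ $f_i(p) \neq 0$. Then $\bd = \sum \alpha_i\deg(f_i)$ exhibits $\bd$ as a $\Z_{\geq 0}$-combination of degrees of generators non-vanishing at $p$.

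The identity $\Bpf(X) = \bigcap_p S_p$ is just the definition: a class $\bd$ is basepoint-free exactly when for every $p \in X$ there exists $s$ of degree $\bd$ with $s(p)\neq 0$, i.e., $\bd \in S_p$ for all $p$. Combining with the first part, the intersection $\bigcap_p S_p$ can be computed as a finite intersection of the finitely many monoids $\Z_{\geq 0}\{\deg(f_i)\mid i \in I\}$, where $I$ ranges over the subsets of $\{1,\ldots,m\}$ that arise as $\{i : f_i(p)\neq 0\}$ for some $p \in X$.

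For $\Nef(X) = \bigcap_p C_p$, use that on a Mori dream space the Nef cone coincides with the semiample cone (Hu--Keel). If $\bd$ is nef, some positive multiple $k\bd$ is basepoint-free, so $k\bd \in \bigcap_p S_p$ and therefore $\bd \in \bigcap_p C_p$. For the converse, the key observation is the same finiteness as above: only finitely many distinct monoids $S_p$ occur as $p$ varies, so $\bigcap_p C_p = \bigcap_p \Q_{\geq 0}S_p$ is a finite intersection of finitely generated rational cones. If $\bd \in \CL(X)$ lies in each $C_p$, we can clear denominators simultaneously to find a single $N \in \Z_{>0}$ with $N\bd \in S_p$ for every $p$; then $N\bd$ is basepoint-free and hence $\bd$ is nef.

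The only potential obstacle is the passage from ``$\bd \in C_p$ for every $p$'' to ``some uniform multiple lies in every $S_p$''; this is precisely where the Mori dream space hypothesis is used, via the finiteness of the set of monoids $\{S_p\}_{p \in X}$ coming from finite generation of $\RR(X)$. Once this finiteness is recorded, the three claims follow formally.
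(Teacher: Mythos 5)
Your argument is correct and is exactly the fleshed-out version of what the paper takes for granted: the paper states this proposition without proof, calling it straightforward, and the surrounding text already records the two ingredients you use (the identity $\Nef = $ semiample on a Mori dream space, and the finiteness of the set of distinct monoids $S_p$ coming from the finitely many vanishing patterns of the generators). The expansion of a degree-$\bd$ section as a sum of degree-$\bd$ monomials in the $f_i$, evaluated at a lift $\tilde p$ in the characteristic space, is the intended justification of $S_p = \Z_{\geq 0}\{\deg(f_i)\mid f_i(p)\neq 0\}$, so no discrepancy to report.
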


Clearly only a finite number of distinct $S_p$ are possible, so the computation of $\Bpf(X)$ and $\Nef(X)$ can always be achieved with a finite number of points in $X$. When we consider a smooth toric variety $X(\Sigma)$ these points can always be chosen to be the $T$-fixed points.  We let $S_\sigma$ and $C_\sigma$ denote the monoid and cone associated to the torus fixed point defined by a maximal face of $\Sigma$.  Since $\Sigma$ is smooth, the monoid $S_\sigma$ is freely generated, and moreover any smooth polytope (say with inner normal fan $\Sigma$) is very ample (\cite[Proposition 2.4.4]{CLS}).  The following proposition summarizes what we need from these observations. 

\begin{proposition}\label{prop-toricpositivity}
Let $X(\Sigma)$ be a smooth, projective toric variety, then\\

\[\Bpf(X(\Sigma)) = \bigcap_{\sigma \in \Sigma} S_\sigma,\]

\[\Nef(X(\Sigma)) = \bigcap_{\sigma \in \Sigma} C_\sigma.\]\\

\noindent
Each monoid $S_\sigma$ is freely generated. As a consequence, $\Bpf(X(\Sigma))$ is a saturated monoid and any Nef class on $X(\Sigma)$ is basepoint free.  Moreover, any ample class on $X(\Sigma)$ is very ample. 
\end{proposition}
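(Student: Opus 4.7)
The plan is to apply the unnumbered proposition that precedes the statement, taking as Cox ring generators the prime toric divisor sections $x_1, \ldots, x_n$ with $\deg(x_i) = \be_i$, and then to show that the infinite intersection over all $p \in X(\Sigma)$ collapses to a finite intersection over maximal cones $\sigma \in \Sigma$. The key point is that, because $\RR(X(\Sigma)) = \C[x_1,\ldots,x_n]$ is generated by sections whose vanishing is detected by the torus-orbit stratification, the monoid $S_p$ depends only on the face $\tau \in \Sigma$ with $p \in O_\tau$. Explicitly, $x_i(p) \neq 0$ if and only if $\rho_i \notin \tau(1)$, so $S_p = \Z_{\geq 0}\{\be_j : \rho_j \notin \tau(1)\}$, and in particular $S_{q_\sigma} = \Z_{\geq 0}\{\be_j : \rho_j \notin \sigma(1)\}$ for the $T$-fixed point $q_\sigma$ of a maximal face $\sigma$.

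For the reduction to $T$-fixed points, I would note that since $X(\Sigma)$ is complete every face $\tau$ sits inside some maximal face $\sigma$, whence $\tau(1) \subseteq \sigma(1)$ and therefore $S_{q_\sigma} \subseteq S_p$ for any $p \in O_\tau$ (and similarly on the $\Q$-cones $C_p$). Consequently
\[
\bigcap_{p \in X(\Sigma)} S_p \;=\; \bigcap_{\sigma \in \Sigma(d)} S_{q_\sigma}, \qquad \bigcap_{p \in X(\Sigma)} C_p \;=\; \bigcap_{\sigma \in \Sigma(d)} C_{q_\sigma},
\]
and combining with the preceding proposition yields the two displayed formulas.

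For the claim that each $S_\sigma$ is freely generated, I would invoke smoothness of $\Sigma$: the ray generators $u_i$ for $\rho_i \in \sigma(1)$ form a $\Z$-basis of $N$, so the composition $M \to \Z^n \to \Z^{\sigma(1)}$ is an isomorphism. This produces a splitting of the defining exact sequence $0 \to M \to \Z^n \to \CL(X(\Sigma)) \to 0$ identifying $\CL(X(\Sigma))$ with $\Z^{\Sigma(1) \setminus \sigma(1)}$, under which the classes $\{\be_j : \rho_j \notin \sigma(1)\}$ become the standard basis. Hence $S_\sigma$ is a free commutative monoid on a $\Z$-basis of $\CL(X(\Sigma))$, and in particular $S_\sigma = C_\sigma \cap \CL(X(\Sigma))$.

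The corollaries then follow without effort: a free commutative monoid is saturated, and an intersection of saturated submonoids of a lattice is saturated, so $\Bpf(X(\Sigma))$ is saturated. If $\bd \in \Nef(X(\Sigma))$ then $\bd \in C_\sigma \cap \CL(X(\Sigma)) = S_\sigma$ for every maximal $\sigma$, so $\bd \in \Bpf(X(\Sigma))$, proving that Nef classes are basepoint-free. The final assertion that every ample class on a smooth projective toric variety is very ample is the classical smooth-polytope statement \cite[Proposition 2.4.4]{CLS} and can be cited directly. There is no serious obstacle here; the main subtlety is simply the orbit-stratification bookkeeping in the second paragraph, which ensures that the vanishing conditions for the $x_i$ are constant along torus orbits and hence that the problem reduces from all of $X(\Sigma)$ to the finite set $\Sigma(d)$.
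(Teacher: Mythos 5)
Your proof is correct and follows exactly the route the paper takes: the paper does not give a formal proof of this proposition but derives it from the preceding unnumbered proposition together with the observations that the relevant points may be taken to be the $T$-fixed points, that smoothness of $\Sigma$ makes each $S_\sigma$ free, and that smooth ample polytopes are very ample by \cite[Proposition 2.4.4]{CLS}. Your write-up simply supplies the orbit-stratification and splitting details that the paper leaves implicit.
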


\subsection{Basics of $\Nef(\P\E)$}

In this section we give a combinatorial description of a finite set of points $p$ for which the monoids $S_p$ and cones $C_p$ suffice to compute $\Nef(\P\E)$ and $\Bpf(\P\E)$.  First we narrow the search to include only those points in the $T$-fixed point fibers of $\P\E$. 

\begin{proposition}\label{prop-Tfixedsuffices}
For $\Bpf(\P\E)$ and $\Nef(\P\E)$ it suffices to consider the points in the $T$-fixed-point fibers $(\P\E)_{\sigma} \subset \P\E$.
\end{proposition}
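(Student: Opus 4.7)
The plan is to use the $T$-action on $\P\E$: given any $p \in \P\E$, I will produce a point $p' \in (\P\E)_\sigma$ for some maximal $\sigma \in \Sigma$ with $S_{p'} \subseteq S_p$, and hence $C_{p'} \subseteq C_p$. This is exactly what is needed, since restricting the intersections $\bigcap_p S_p$ and $\bigcap_p C_p$ to points in $\bigcup_\sigma (\P\E)_\sigma$ can only enlarge them, while the inclusion on the side of the individual monoids and cones guarantees that no constraint is dropped when passing from the larger point set to the smaller one.

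The point $p'$ is produced by a one-parameter subgroup limit. First I would fix a cocharacter $\lambda \colon \C^* \to T$ whose image lies in the relative interior of some maximal cone $\sigma \in \Sigma$. Then $\lim_{t \to 0} \lambda(t) \cdot \pi(p) = q_\sigma$ in $X(\Sigma)$; by properness of $\P\E$ and $T$-equivariance of $\pi$, the limit $p' := \lim_{t \to 0} \lambda(t) \cdot p$ exists in $\P\E$ and lies in $\pi^{-1}(q_\sigma) = (\P\E)_\sigma$.

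The second ingredient is that the generators $X_1,\ldots,X_n, Y_1,\ldots,Y_m$ of $\RR(\P\E)$ are $T$-semi-invariant. This is immediate from the construction recalled in Section \ref{sec-background}: the $\CL(\P\E)$-grading is refined by a $T$-character grading, so the zero divisor $V(f) \subset \P\E$ of each such generator $f$ is a closed $T$-invariant subvariety. If $f$ is a generator with $f(p') \neq 0$, then some open neighborhood of $p'$ avoids $V(f)$, forcing $\lambda(t) \cdot p \notin V(f)$ for $t$ in a punctured neighborhood of $0$; $T$-invariance of $V(f)$ then gives $p \notin V(f)$, i.e., $f(p) \neq 0$. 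By the characterization $S_p = \Z_{\geq 0}\{\deg(f_i) \mid f_i(p) \neq 0\}$, this yields $S_{p'} \subseteq S_p$, and taking $\Q_{\geq 0}$-spans gives $C_{p'} \subseteq C_p$.

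I do not foresee a substantial obstacle beyond verifying that the chosen generators are $T$-semi-invariant, which is transparent from their realization inside $\Sym(E)[t_1^\pm,\ldots,t_n^\pm]$ via $X_i \mapsto t_i^{-1}$ and $Y_j \mapsto y_j t^{D_j}$: both are weight vectors for the torus action on the ambient Laurent polynomial ring, so their zero loci in $\P\E$ are automatically $T$-stable.
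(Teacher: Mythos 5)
Your proof is correct and follows essentially the same route as the paper: take a one-parameter subgroup limit $p' = \lim_{t\to 0}\lambda(t)\cdot p$ into a $T$-fixed-point fiber and use $T$-quasi-invariance of the generators to conclude $S_{p'}\subseteq S_p$ and $C_{p'}\subseteq C_p$. The only nitpick is that the cocharacter $\lambda$ should be chosen depending on $p$ (as the paper does), since for $\pi(p)$ in a boundary orbit a fixed generic $\lambda$ need not send it to $q_\sigma$ for your pre-chosen $\sigma$ — but the limit still lands in some $T$-fixed-point fiber, which is all the argument needs.
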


\begin{proof}
Let $p \in \P\E$, and let $n \in N$ be such that $p_0 = \lim_{t\to 0} \chi_n(t)\circ p \in \P\E_\sigma$ for a maximal face $\sigma \in \Sigma$. Now if $s \in \RR(\P\E)$ is a $T-$quasi-invariant section, we must have $s(p) = 0$ implies $s(p_0) = 0$.  As a consequence, any section which does not vanish at $p_0$ must not vanish at $p$, so $S_p \supseteq S_{p_0}$ and $C_p \supseteq C_{p_0}$. 
\end{proof}

By Proposition \ref{prop-Tfixedsuffices} it suffices to find a finite set of points $q \in \P\E_\sigma$ such that $S_q \subset S_p$ for every other $p \in \P\E_\sigma$. Let $L \subset \C[y_1, \ldots, y_m]$ be a linear ideal, where $y_1, \ldots, y_m$ give projective coordinates on the projective space $\P^{m-1}$, and let $V(L) \subset \P^{m-1}$ be the corresponding linear subspace.  The coordinate hyperplanes $H_i = V(y_i) \subset \P E$ define a stratification of $V(L)$ with closed strata $V_S(L) = V(L) \cap \bigcap_{i \in S} H_i$.  Let $\PP(L)$ denote the poset defined by these strata, where $V_S(L) \preceq V_{S'}(L)$ when $V_S(L) \subset V_{S'}(L)$. We are interested in the \emph{minimal} elements of $\PP(L)$, which in turn correspond to the maximal non-trivial flats of the matroid $\MM(L)$.  Observe that for any maximal flat $F$, $V_F(L)$ must be a single point.

\begin{definition}
For $\sigma \in \Sigma$ a maximal face, and $F$ a maximal flat of the matroid $\MM(\In_\sigma L)$, let $p_{\sigma, F} \in \P\E_\sigma$ be the unique point in $V_F(\In_\sigma L)$. Let $S_{\sigma, F}$ and $C_{\sigma, F}$ be the monoid and rational cone of nonzero sections associated to $p_{\sigma, F}$.
\end{definition}

\begin{theorem}\label{thm-nefCI}
Let $\E$ be toric vector bundle over a smooth toric variety $X(\Sigma)$, and suppose that the Cox ring $\RR(\P\E)$ is generated in $\Sym$-degree $1$, then\\

\[\Bpf(\P\E) = \bigcap_{\sigma, F} S_{\sigma, F},\] 

\[\Nef(\P\E) = \bigcap_{\sigma, F} C_{\sigma, F}.\]\\
\end{theorem}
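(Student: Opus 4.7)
The plan is to combine Proposition \ref{prop-Tfixedsuffices} with an explicit description of $S_p$ for $p$ lying in a torus fixed fiber $\P\E_\sigma$. By Proposition \ref{prop-Tfixedsuffices}, both $\Bpf(\P\E)$ and $\Nef(\P\E)$ are determined by intersecting over $p \in \bigcup_\sigma \P\E_\sigma$, so after fixing a maximal $\sigma \in \Sigma$ it suffices to prove that $\bigcap_{p \in \P\E_\sigma} S_p = \bigcap_F S_{\sigma, F}$, where $F$ ranges over the maximal non-trivial flats of $\MM(\In_\sigma L)$. The $\Nef$ equality then follows from $C_p = \Q_{\geq 0}S_p$ and the same intersection.

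Since $\RR(\P\E)$ is generated in $\Sym$-degree $1$ by $X_1, \ldots, X_n, Y_1, \ldots, Y_m$, the preceding unlabeled proposition on generators exhibits $S_p$ as the $\Z_{\geq 0}$-span of the degrees of those generators which do not vanish at $p$. For any $p \in \P\E_\sigma$, the section $X_i$ is pulled back from the prime toric divisor of $\rho_i$, so $X_i(p) = 0$ if and only if $\rho_i \in \sigma(1)$, independent of the choice of $p$. Using the identification $\P\E_\sigma \cong \Proj(\C[y_1, \ldots, y_m]/\In_\sigma L)$ together with the map $\Phi : Y_j \mapsto y_j t^{D_j}$, the restriction of $Y_j$ to the fiber is identified with the class of $y_j$ modulo $\In_\sigma L$. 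Consequently $S_p$ depends only on the vanishing set $Z(p) = \{j : y_j(p) = 0\}$, and a direct comparison of generators shows that $S_{\sigma, F} \subseteq S_p$ exactly when $Z(p) \subseteq F$.

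To conclude I would show that every such $Z(p)$ is contained in some maximal non-trivial flat of $\MM(\In_\sigma L)$. Let $\bar Z(p)$ denote the matroid closure of $Z(p)$, and let $r$ be the rank of $\MM(\In_\sigma L)$. If $\bar Z(p)$ were the full ground set, then some size-$r$ independent subset of $Z(p)$ would be a basis of $\C[y_1, \ldots, y_m]/\In_\sigma L$; every remaining $y_j$ would then satisfy a relation $y_j - \sum_{k \in Z(p)} c_{jk} y_k \in \In_\sigma L$, forcing $p_j = 0$ and contradicting $p \in \P^{m-1}$. Hence $\bar Z(p)$ is a proper flat, and by extending a maximal chain of flats through it I obtain a rank-$(r-1)$ flat $F \supseteq Z(p)$. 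This yields $\bigcap_F S_{\sigma, F} \subseteq S_p$ for every $p \in \P\E_\sigma$; combining over $\sigma$ completes the $\Bpf$ statement, and passing to $C_p = \Q_{\geq 0} S_p$ yields the $\Nef$ statement.

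The main technical hurdle is the compatibility claim that the Cox ring generator $Y_j$ restricts to the projective coordinate $y_j$ on $\P\E_\sigma$ modulo $\In_\sigma L$; this should be visible from tracing the map $\Phi$ while Gr\"obner-degenerating along the one-parameter subgroup that specializes the torus orbit to $q_\sigma$, matching the degeneration of $L$ into $\In_\sigma L$. Once this identification is secured, the rest of the argument reduces to elementary matroid theory and the rank calculation above.
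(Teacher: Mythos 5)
Your proposal is correct and follows essentially the same route as the paper: reduce to $T$-fixed fibers via Proposition \ref{prop-Tfixedsuffices}, identify $S_p$ with the span of degrees of non-vanishing generators, and observe that the vanishing set of any $p \in \P\E_\sigma$ sits inside a maximal non-trivial flat of $\MM(\In_\sigma L)$, giving $S_p \supseteq S_{\sigma,F}$. Your rank argument showing the closure of $Z(p)$ is a proper flat just fills in a step the paper asserts without proof.
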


\begin{proof}
First we observe that if $p \in \P\E_\sigma$, then the $y_j$ such that $y_j(p) = 0$ form a flat of $\MM(\In_\sigma L)$. It follows that the $Y_j \in \RR(\P\E)$ which are non-zero at $p$ contain the $Y_j$ from some maximal flat $F \subset \MM(\In_\sigma L)$. The Cox ring $\RR(\P\E)$ is generated by the $x_i$ $1 \leq i \leq n$ and $Y_j$ $1 \leq j \leq m$, hence $S_p$ is generated by $(-\be_i,0)$ for $i \notin \sigma(1)$ and $(\bd_j, 1)$ for $Y_j(p) \neq 0$. In turn, this implies that $S_p \supseteq S_{\sigma, F}$ and $C_p \supseteq C_{\sigma, F}$.
\end{proof}

\subsection{Monomial Bundles}

Theorem \ref{thm-nefCI} suggests that Nef classes are easier to control on projectivized toric vector bundles with tractable matroids $\MM(\In_\sigma L)$. The next definition is made with this idea in mind. 

\begin{definition}[Monomial Bundle]
A toric vector bundle $\E$ over a smooth toric variety $X(\Sigma)$ defined by $D \in \Delta(\Sigma, L)$ is said to be \emph{monomial} if each initial ideal $\In_\sigma L$ is a monomial ideal.      
\end{definition}

A bundle defined by $L$ and $D \in \Delta(\Sigma, L)$ is monomial precisely when, for each maximal face $\sigma \in \Sigma$, the minimal face of the Gr\"obner fan of $L$ containing the rows of $D$ corresponding to $\sigma$ is a maximal face.  

\begin{example}[Kaneyama bundles]
Any member of the the class of \emph{Kaneyama bundles} studied in Section \ref{sec-Fano} is monomial by Proposition \ref{prop-kaneyama-monomial}. In particular, any tangent bundle $\T X(\Sigma)$ of a smooth toric variety is monomial. 
\end{example}

Given a monomial bundle $\E$ over $X(\Sigma)$ which comes from the pair $(L, D)$, we show several ways to build a new monomial bundle $\E'$ coming from an extension $(L', D')$ as in Section \ref{sec-extension}.  Recall that we realize $L$ as $\ker(M)$, and $L'$ as $\ker(M')$ with $M' = [M \mid U]$, and $D' = [D\mid X]$. 

\begin{theorem}\label{theorem: CI monomial extension}
    Let $\E$ be a monomial and CI bundle. Then $\E'$ is monomial and CI bundle if $x_{ij} \in X$ is larger than the minimal element of any circuit of $M$.
\end{theorem}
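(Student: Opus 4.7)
The plan is to verify two things: $\E'$ is CI and $\E'$ is monomial. For the CI property, I would apply Theorem \ref{thm-CIcondition} to the extended data $(M', D')$, checking the rank inequalities $r' < |A| + m_A'$ and $1 + m_{\{i\}}' < |B| + m_B'$. These follow from the corresponding inequalities for $(M, D)$ (since $\E$ is CI), together with the observation that the hypothesis on new entries prevents the added columns from introducing ``common minima'' for subsets $A \subseteq [n]$ that would lower the relevant ranks, so $m_A' \geq m_A$ in all cases where the inequalities could become tight. Alternatively, one can adapt the earlier extension proposition by verifying that the initial ideal $\In_{\hat{w}_i}(\I')$ remains prime and is generated by the initial forms of the homogenized rows of $M'$.

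For the monomial property, I need to show $\In_\sigma L'$ is a monomial ideal for each maximal face $\sigma \in \Sigma$. Since $L'$ is linear, $\In_\sigma L' = \langle \In_\sigma \ell_{C'} : C' \text{ a circuit of } \MM(L')\rangle$, so it suffices to check that each such initial form is a single monomial. I would classify circuits of $\MM(L')$ into two types: \emph{old} circuits contained in $\{y_1, \ldots, y_m\}$, which are circuits of $\MM(L)$ and for which $\In_\sigma \ell_{C'}$ coincides with the corresponding initial form for $\E$ (hence a monomial by hypothesis); and \emph{mixed} circuits $C'$ involving at least one new variable. For a mixed $C'$ with $\ell_{C'} = \sum_k \lambda_k \ell_k'$, the old part $g = \sum_k \lambda_k \ell_k \in L$ is supported on $C' \cap \{y_1, \ldots, y_m\}$, and linear independence of the $\ell_k$ forces $g \neq 0$. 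Using circuit-minimality of $C'$ in $\MM(L')$ together with the apartment condition on $D$ (the rows indexed by $\sigma(1)$ lie in a common apartment of $\Trop(L)$), one identifies $g$ with a scalar multiple of the circuit polynomial $\ell_C$ for a single old circuit $C \subseteq C' \cap \{y_1, \ldots, y_m\}$. The hypothesis then guarantees that the $w_\sigma$-weight of each new variable in $C' \setminus \{y_1, \ldots, y_m\}$ strictly exceeds the minimum of $w_\sigma$ on $C$, and by monomial-ness of $\E$ this minimum is uniquely attained at a leading variable $y_{j^*} \in C$. Therefore the minimum of $w_\sigma$ on the support of $\ell_{C'}$ is uniquely attained at $y_{j^*}$, and $\In_\sigma \ell_{C'}$ is a monomial.

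The main obstacle will be the matroid-theoretic identification in the mixed-circuit case: showing that $g$ is essentially the circuit polynomial for a single old circuit $C$ rather than a combination of polynomials from several distinct old circuits. This requires using the apartment condition together with monomial-ness of $\E$ to rule out configurations in which $C' \cap \{y_1, \ldots, y_m\}$ contains multiple disjoint old circuits, since in that case the leading variable would fail to be unique under the hypothesis alone. Once this structural result is in place, the remaining weight comparisons are routine and yield both the monomial conclusion and, in combination with Theorem \ref{thm-CIcondition}, the CI property.
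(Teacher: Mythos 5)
Your CI half follows the paper's own route: check the rank conditions of Theorem \ref{thm-CIcondition} for $(M',D')$ using the fact that the large new diagram entries keep the ``minimal columns'' inside the old part of $M'$. One caveat: the extension raises the rank of the bundle from $r$ to $r+s$ (each new column of $M'$ adds a variable but no new relation), so the inequality you must verify is $r+s<|A|+m'_A$, and the estimate $m'_A\geq m_A$ is not sufficient --- you need each of the $s$ new columns to contribute to the rank of $M'_A$. This is exactly what the paper's remark that ``adding columns adds equally to both the support and the rank'' is supplying, and your write-up elides it.

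The genuine gap is in the monomial half, where you take a different and ultimately incomplete route. First, your dichotomy of circuits is faulty: since $L\not\subseteq L'$ in general (a row of $M$ is no longer an element of $L'$ once its new coefficients are nonzero), a circuit of $\MM(L')$ supported on $\{y_1,\dots,y_m\}$ need not be a circuit of $\MM(L)$; it can be a minimal dependent set of $\MM(L')$ that decomposes into several disjoint circuits of $\MM(L)$, so the ``old circuit'' case cannot simply quote monomial-ness of $\E$. Second, and decisively, the mixed-circuit case rests on the structural claim that the old part $g$ of a mixed circuit is a scalar multiple of a single old circuit polynomial --- you label this ``the main obstacle'' and describe what proving it would require, but you do not prove it, and without it the unique-leading-variable argument collapses (two disjoint old circuits whose leading monomials tie in weight would produce a binomial initial form). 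As written, the monomial conclusion is therefore not established. The paper avoids circuits of $\MM(L')$ entirely: it works with the fixed generators $\ell'_1,\dots,\ell'_{m-r}$ (the rows of $M'$) and observes that, because every new variable carries weight strictly larger than the minimum on the support of the corresponding old form, $\In_\sigma\ell'_k=\In_\sigma\ell_k$ for every maximal $\sigma$; hence $\In_\sigma L'$ is the extension of the monomial ideal $\In_\sigma L$ to the larger polynomial ring. If you wish to keep your circuit-based route you must actually prove the structural lemma; otherwise the generator-based argument is both shorter and complete at the level of rigor of the rest of the paper.
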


\begin{proof}
    We use the condition for CI described in Theorem \ref{thm-CIcondition}. Note that, if the circuit condition holds, then the minimal columns of $M'$ are always contained in the $M$ part. Therefore, since $M$ is CI and adding columns adds equally to both the support and the rank, we get that $M'$ is CI. 
    Similarly, since our initial forms are taken with respect to weights with the results ordered minimally, adding larger elements to a row does not impact the initial forms. Therefore, $\E'$ is monomial.
\end{proof}

For the next few results, we specialize extended toric vector bundles to the uniform case. Then, we can not have $U$ be any matrix; rather, let $M' = [M \mid U]$ where the entries of $U$ still allow $M'$ to be very general.

\begin{lemma}
Let $\E$ be a uniform toric vector bundle corresponding to $(L,D)$ such that that $D$ has no zero columns. Then $\E'$ is a uniform monomial toric vector bundle if and only if every choice of $r-1$ rows of $D$ has exactly one zero column.
\end{lemma}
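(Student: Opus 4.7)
The plan is to characterize the uniform monomial property of $\E'$ face-by-face via the Gr\"obner fan of the extended linear ideal $L'$, and then translate this into combinatorics of the zero-pattern of the diagram $D$. Since the extension keeps $M'$ very general, the matroid $\MM(L')$ is automatically the uniform matroid of rank $r$ on $m+1$ elements (where $r$ denotes the rank of $\E'$), so the content of the lemma lies in the monomiality condition. For each maximal face $\sigma \in \Sigma$, the requirement that $\In_\sigma L'$ be monomial is, for a very general linear ideal, equivalent to asking that the sum $\sum_{i\in\sigma(1)} w_i'$ of the extended rows have a uniquely determined set of $m+1-r$ smallest entries among its $m+1$ coordinates --- these being precisely the ``leading'' variables at a maximal cone of the Gr\"obner fan.

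I then reduce the analysis to the zero-pattern of submatrices of $D$. Because all entries of $D'$ are non-negative and every row of $D$ contains a zero, the minimum-value positions of the partial sum $\sum_{i\in\sigma(1)} w_i$ occur precisely at the all-zero columns of the submatrix $D|_{\sigma(1)}$, and the single new column contributes exactly one additional candidate position whose behaviour is controlled by the extension data. Working through the count shows that the monomial condition at $\sigma$ becomes a sharp statement about the number of all-zero columns of $D|_A$ for the relevant $(r-1)$-subsets $A$ of row indices: fewer than one would fail to furnish enough small positions to produce a monomial initial ideal, while more than one would create ties at the $(m-r)$-th smallest entry and collapse the minimal containing Gr\"obner face to a proper, lower-dimensional one whose associated initial ideal is not monomial.

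For the forward implication I take any $(r-1)$-element subset $A \subset [n]$, complete it to some $\sigma(1)$ for a maximal face $\sigma$ using smoothness of $X(\Sigma)$, and extract from the monomial condition at $\sigma$ that $D|_A$ has exactly one all-zero column. For the reverse implication I construct the extension explicitly, choosing $U$ with entries large enough that $M'$ is very general and the new column contributes the intended position to the set of smallest entries at each face, and then verify face-by-face using the hypothesis on $D$ that every $\In_\sigma L'$ is monomial. The main obstacle I anticipate is the careful bookkeeping needed to show that ``exactly one'' is the right count, uniformly across all maximal faces of $\Sigma$: I must rule out both the ``too few'' scenario, in which there are not enough small positions to form a maximal Gr\"obner cone, and the ``too many'' scenario, in which ties at the boundary introduce non-monomial generators into $\In_\sigma L'$. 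Ensuring the combinatorial condition on $(r-1)$-row subsets correctly enforces this dichotomy for every $\sigma$ is the central combinatorial step.
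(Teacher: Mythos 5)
Your overall strategy is the same as the paper's: reduce to checking, maximal cone by maximal cone, that $\In_\sigma L'$ is a monomial ideal, and translate that condition into the zero-pattern of the corresponding $(r-1)$-row submatrix of $D$. The paper gets the reduction by citing the uniform CI extension result and then identifying the maximal cones of $\Sigma$ with $(r-1)$-subsets of rows; you get it by working directly with the Gr\"obner fan of $L'$ and making the monomiality criterion explicit as a ``strict gap'' condition on the sorted entries of $\sum_{i\in\sigma(1)} w_i'$. That criterion is correct for a uniform matroid, and making it explicit is a genuine improvement in transparency over the paper's proof.

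However, the step you yourself flag as ``the central combinatorial step'' is left unproven, and as formulated it does not go through. For $\MM(L')$ uniform of rank $s$ on $m+1$ elements, $\In_w L'$ is monomial exactly when there is a strict inequality between the $(m+1-s)$-th and $(m+2-s)$-th smallest entries of $w$. The hypothesis ``exactly one all-zero column'' controls only the gap between the \emph{first} and \emph{second} smallest entries of the row sum (the unique zero versus the positive entries). These two conditions coincide only when $m+1-s=1$, i.e.\ when $L$ is principal; otherwise ``exactly one zero'' neither guarantees the required gap (ties among the positive entries at the critical position can persist) nor is ``more than one zero'' automatically fatal (a tie at positions $1,\dots,k$ with $k\le m+1-s$ does not straddle the critical position). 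Your dichotomy of ``too few'' versus ``too many'' therefore does not establish the equivalence without pinning down the relationship between $m$, the rank, and the number of zeros forced in each row by membership in $\Trop(L)$. Separately, your forward implication completes an arbitrary $(r-1)$-subset $A$ of rows to a maximal cone $\sigma$; but not every $(r-1)$-subset of rays need lie in a common cone, and if $|\sigma(1)|>r-1$ the monomiality condition at $\sigma$ involves extra rows and does not directly yield a statement about $D|_A$. The paper uses the hypothesis that $D$ has no zero columns precisely to identify $(r-1)$-subsets of rows with maximal cones; in your write-up that hypothesis is only used to locate minima of the row sums, so the quantifier ``every choice of $r-1$ rows'' is never actually reached. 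Both points need to be addressed before the argument closes.
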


\begin{proof}
By Proposition 3.9, $\E'$ corresponding to $(L',D')$ is CI. Therefore, we need only check that $\text{in}_\sigma(L')$ is monomial for all maximal $\sigma \in \Sigma$. Since $\E$ is uniform, $\Sigma(1) = e_j$ for $1 \leq j \leq m$ and the maximal cones $\sigma \in \Sigma$ are generated by $r-1$ such rays. Since $D$ has no zero columns, every choice of $r-1$ rows of $D$ corresponds to a maximal cone of $\Sigma$. 
\end{proof}

\begin{theorem}
Let $\E$ be a uniform monomial toric vector bundle. Then $\E'$ is a monomial toric vector bundle if and only if $X \geq 0$ has no more than $r-2$ zero entries per column.
\end{theorem}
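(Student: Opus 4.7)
The plan is to reduce the monomiality of $\E'$ to a per-circuit lex-minimum uniqueness condition on $\MM(L')$, isolate the unique source of ties using the distinguished ``zero column'' inherited from the uniform monomial structure of $\E$, and convert the resulting no-tie condition into the claimed bound on zero entries of $X$.

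First, $\E'$ is monomial exactly when $\In_\sigma L'$ is a monomial ideal for every maximal cone $\sigma \in \Sigma$. By iterated initial ideal computation with respect to the rows of $D' = [D \mid X]$ indexed by $\sigma(1)$, this is equivalent to requiring the combined $\sigma$-weight to achieve a unique lex minimum on every circuit of $\MM(L')$. Since $\E$ is uniform and monomial, the analogous condition already holds for $L$, and by the preceding lemma every maximal $\sigma$ admits a distinguished column $y_{j^{*}(\sigma)}$ of $D$ whose entries vanish on all rows in $\sigma(1)$; its combined $\sigma$-weight is the all-zero vector, and it realizes the lex minimum on every circuit of $\MM(L)$ that contains it.

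Second, I would track the effect of adding the new variables $y_{m+j}$ coming from the columns of the extension $X$. Each $y_{m+j}$ carries combined $\sigma$-weight equal to the column vector $(X_{i,j})_{i \in \sigma(1)}$. Because $X \geq 0$, the only way $y_{m+j}$ can obstruct monomiality of $\In_\sigma L'$ is by matching the all-zero lex-weight of $y_{j^{*}(\sigma)}$, and this happens precisely when $X_j$ restricted to rows in $\sigma(1)$ is identically zero. Ties among new variables, or between a new variable and a non-distinguished old variable, cannot undercut the strict minimum at $y_{j^{*}(\sigma)}$ and are therefore harmless.

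Third, I would translate ``$X_j|_{\sigma(1)} \not\equiv 0$ for every maximal $\sigma$ and every column index $j$'' into the combinatorial bound. Using that the maximal cones of $\Sigma$ sweep out all $(r-1)$-subsets of the $r$ rays (from the uniform structure underlying $\E$), the condition becomes that the support of each $X_j$ intersects every $(r-1)$-subset; by a pigeonhole argument on complements in $[r]$, this is equivalent to the support of $X_j$ having size at least $2$, which in turn says $X_j$ has at most $r-2$ zero entries. The forward direction constructs a cone $\sigma$ supported on any $r-1$ rays where $X_j$ vanishes whenever a column exceeds the bound; the backward direction is immediate.

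The main obstacle is step two: showing that no tie other than the one with $y_{j^{*}(\sigma)}$ can obstruct monomiality. This uses both the uniform monomial hypothesis on $\E$ (so that the minimum on every old circuit is uniquely realized at $y_{j^{*}(\sigma)}$) and the non-negativity of $X$ (so that no new variable can lex-dominate the all-zero weight from below).
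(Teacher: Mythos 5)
Your argument is correct and follows essentially the same route as the paper: both reduce to the preceding lemma's criterion that every choice of $r-1$ rows of $D'$ have exactly one common zero column, and both observe that, since $X \geq 0$, the only way a new column can disturb this is by vanishing on all $r-1$ chosen rows, i.e.\ by having at least $r-1$ zero entries, with the converse immediate because a column with at most $r-2$ zeros can never become a new common zero column. The paper states this in one line; your Gr\"obner-theoretic elaboration (unique lex-minimum on circuits, with the all-zero weight of the distinguished column as the only vulnerable minimizer) is a valid expansion in the corank-one setting the lemma implicitly assumes, where every circuit contains the distinguished column -- in higher corank the correct criterion is that the number of common zero columns equal the corank rather than unique lex-minima on circuits, so the ``harmless ties'' claim would need restating there.
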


\begin{proof}
By the above lemma, it suffices to confirm that every choice of $r-1$ rows of $D'$ has exactly one zero column. Note that this is only possible if $X$ has no column with more than $r-2$ zero entries since, if there were, choosing these rows would introduce another zero column. 
\end{proof}

Now that we have established that the class of monomial bundles $\E$ with $\RR(\P\E)$ is sufficiently rich, we come to the main result of this section. In what follows we let $F_j = \{y_1, \ldots, y_m\}\setminus \{y_j\}$.

\begin{proposition}\label{prop-monomial}
Let $\E$ be a monomial bundle over $X(\Sigma)$ with $\RR(\P\E)$ generated in $\Sym$-degree $1$, then: \\

\begin{enumerate}
\item The maximal flats of $\MM(\In_\sigma L)$ are all of the form $F_j$ for some $j$,
\item The monoids $S_{\sigma, F}$ are all freely generated, 
\item The cones $C_{\sigma, F}$ are all smooth, 
\item Any Nef class on $\P\E$ is basepoint-free, 
\item $\P\E$ satisfies Fujita's freeness conjecture.\\ 
\end{enumerate}

\end{proposition}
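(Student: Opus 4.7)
The plan is to handle the five assertions in order, observing that the monomial hypothesis collapses the matroid combinatorics to an especially tractable form, after which each subsequent assertion becomes a short linear algebra check or a direct application of earlier results. Throughout, write $d = \dim X(\Sigma)$.

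For (1), since $\E$ is monomial, each $\In_\sigma L$ is both linear and monomial, hence of the form $\langle y_i : i \in I_\sigma \rangle$ for some subset $I_\sigma \subset [m]$ of size $m-r$ (Gr\"obner degeneration preserves dimension, so $\dim \C[\bar y]/\In_\sigma L = r$). The matroid $\MM(\In_\sigma L)$ therefore consists of loops indexed by $I_\sigma$ together with a free matroid on the remaining $r$ non-loop indices. Every flat must contain all loops, and in a free matroid every subset is a flat, so the maximal proper flats are precisely those missing a single non-loop index, i.e.\ the sets $F_j$ for $y_j \notin I_\sigma$. This establishes (1).

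For (2) and (3), the proof of Theorem \ref{thm-nefCI} shows that $S_{\sigma, F}$ is generated by $\{(-\be_i, 0) : i \notin \sigma(1)\} \cup \{(\bd_k, 1) : y_k \notin F\}$. Applied to $F = F_j$, only $y_j$ lies outside the flat, giving generators $\{(-\be_i, 0) : i \notin \sigma(1)\} \cup \{(\bd_j, 1)\}$. The number of generators is $(n-d) + 1$, equal to the rank of $\CL(\P\E) \cong \CL(X(\Sigma)) \oplus \Z$. Since $\sigma$ is a smooth maximal cone, the classes $\be_i$ for $i \notin \sigma(1)$ form a $\Z$-basis of $\CL(X(\Sigma))$; adjoining $(\bd_j, 1)$, whose second coordinate is $1$, produces a $\Z$-basis of $\CL(\P\E)$. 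Hence each $S_{\sigma, F_j}$ is a free monoid on these generators and each $C_{\sigma, F_j}$ is a smooth cone.

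For (4), let $\bd \in \Nef(\P\E)$. By Theorem \ref{thm-nefCI}, $\bd$ lies in every $C_{\sigma, F_j}$. Since each such cone is smooth with rays forming a $\Z$-basis of $\CL(\P\E)$, its lattice points coincide with the monoid $S_{\sigma, F_j}$. Hence $\bd \in \bigcap_{\sigma, F} S_{\sigma, F} = \Bpf(\P\E)$. For (5), Mori's cone theorem, as cited in the discussion following Conjecture \ref{conj-Fujita}, ensures that $K_{\P\E} + mA$ is Nef whenever $A$ is ample and $m \geq \dim(\P\E) + 1$, and then (4) upgrades Nef to basepoint-free, yielding Fujita's freeness conjecture for $\P\E$. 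The main content is concentrated in part (1): once monomiality is seen to force $\MM(\In_\sigma L)$ to be a free matroid plus loops, the remaining steps reduce to the observation that the rays of each $C_{\sigma, F_j}$ form a lattice basis, so the cone is smooth and its integral points are exactly the monoid generated by its rays.
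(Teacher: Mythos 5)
Your proposal is correct and follows essentially the same route as the paper: monomiality forces $\MM(\In_\sigma L)$ to be a free matroid plus loops, so the maximal flats are the $F_j$, each $S_{\sigma,F_j}$ is freely (hence saturatedly) generated by $(-\be_i,0)$, $i\notin\sigma(1)$, together with $(\bd_j,1)$, and saturation of the intersection gives Nef $\Rightarrow$ basepoint-free, with Fujita freeness following via Mori's cone theorem. The only difference is that you spell out the lattice-basis verification that the paper delegates to its Proposition on smooth toric positivity.
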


\begin{proof}
First we observe that if each $\In_\sigma L$ is a monomial ideal, then $\MM(\In_\sigma L)$ is a single basis with $m-r$ loops.  As a consequence, any maximal flat of $\MM(\In_\sigma L)$ is the complement of a single element.  We see then that $S_{\sigma, F} = -S_\sigma \times \{0\} + \Z_{\geq 0}(\bd_j, 1)$ for some $1 \leq j \leq m$.  It follows that $S_{\sigma, F}$ is freely generated, this proves $1$, $2$, and $3$.  Moreover, $\Bpf(\P\E)$ is the intersection of saturated monoids, and is therefore saturated.  This proves $3$, $4$, and $5$.   
\end{proof}

\subsection{Toric Vector Bundles over $\P^{\nn}$}

If the Cox ring $\RR(\P\E)$ of a projectivized toric vector bundle is generated in $\Sym$-degree $1$, Theorem \ref{thm-nefCI} implies $\Nef(\P\E)$ is an intersection of cones of the form $C_\sigma\times\{0\} + \sum \Q_{\geq 0}(\bd_j, 1)$.  Computing such an intersection simplifies when each cone $C_\sigma$ coincides with the Nef cone of $X(\Sigma)$; this is precisely what happens when $X(\Sigma) = \P^{\nn} = \P^{n_1} \times \cdots \times \P^{n_\ell}$.  

Recall that a \emph{coloop} of a matroid $\MM$ is an element whose complement is a flat.  Coloops play an important role in computing the Nef cone of a monomial bundle, they are also the key player for positivity properties of toric vector bundles bundles over $\P^{\nn}$. 

\begin{theorem}\label{thm-Pn}
Let $\E$ be a toric vector bundle over $\P^{\nn}$ with $\RR(\P\E)$ generated in $\Sym$-degree $1$, and suppose further that for any $1 \leq j \leq m$ there is a maximal face $\sigma$ such that $y_j$ is a coloop of $\MM(\In_\sigma L)$, then:\\

\begin{enumerate}
\item Any Nef class on $\P\E$ is basepoint-free, 
\item Any ample class on $\P\E$ is very ample, 
\item $\P\E$ satisfies Fujita's freeness and ampleness conjectures. \\
\end{enumerate}

\end{theorem}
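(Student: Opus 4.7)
The strategy is to exploit the coloop hypothesis to isolate a small, tractable subfamily of pairs $(\sigma, F)$ that governs the intersections in Theorem~\ref{thm-nefCI}. For each $j \in \{1, \ldots, m\}$, pick a maximal face $\sigma_j \in \Sigma$ in which $y_j$ is a coloop of $\MM(\In_{\sigma_j} L)$; then $F_j := \{y_1, \ldots, y_m\} \setminus \{y_j\}$ is a maximal flat, and the associated point $p_{\sigma_j, F_j}$ has $Y_j$ as its unique nonvanishing $Y$-coordinate. Because $X(\Sigma) = \P^{\nn}$, each monoid $S_\sigma$ coincides with $\Nef(\P^{\nn}) \cap \CL(\P^{\nn})$, freely generated by the hyperplane classes $h_1, \ldots, h_\ell$ (one per factor). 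Consequently $S_{\sigma_j, F_j}$ is freely generated by $(h_1, 0), \ldots, (h_\ell, 0), (\bd_j, 1)$, and these classes form a $\Z$-basis of $\CL(\P\E) \cong \Z^{\ell+1}$.

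I would then reduce Theorem~\ref{thm-nefCI} to this subfamily. Given any pair $(\sigma, F)$, the set $F^c$ is nonempty (as $F$ is proper), so choose $j_0 \in F^c$; the generators of $S_{\sigma_{j_0}, F_{j_0}}$ lie among those of $S_{\sigma, F}$ (the same $h_k$'s plus $(\bd_{j_0}, 1)$), and the analogous containment holds for the cones. Hence $\Bpf(\P\E) = \bigcap_j S_{\sigma_j, F_j}$ and $\Nef(\P\E) = \bigcap_j C_{\sigma_j, F_j}$. Since each $S_{\sigma_j, F_j}$ is freely generated by a $\Z$-basis of $\CL(\P\E)$, it is a saturated submonoid; an intersection of saturated submonoids is saturated, giving $\Bpf(\P\E) = \Nef(\P\E) \cap \CL(\P\E)$ and proving~(1).

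For~(2) I would compare $\P\E$ with the split projective bundle $\P\V_D$ of Proposition~\ref{prop-neat}. The bundle $\V_D$ corresponds to the pair $(L=0, D)$, and $\MM(0) = U^m_m$ is the free matroid in which every element is a coloop; running the preceding argument on $\P\V_D$ gives $\Nef(\P\V_D) = \bigcap_j C_{\sigma_j, F_j}$, computed from the same diagram $D$ and the same $S_\sigma$. Under the isomorphism $\CL(\P\V_D) \cong \CL(\P\E)$ of Proposition~\ref{prop-neat} this yields $\Nef(\P\V_D) = \Nef(\P\E)$, hence also equality of their ample interiors. Since $\P\V_D$ is a smooth projective toric variety, Proposition~\ref{prop-toricpositivity} yields that any ample class on $\P\V_D$ is very ample, and restricting a very ample line bundle along the closed embedding $\P\E \hookrightarrow \P\V_D$ preserves very ampleness, establishing~(2). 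Finally,~(3) follows from~(1),~(2) and Mori's cone theorem exactly as discussed after Conjecture~\ref{conj-Fujita}: for ample $A$ and $m \geq \dim\P\E + 1$ (resp.~$m \geq \dim\P\E + 2$), $K_{\P\E} + mA$ is Nef (resp.~ample), hence basepoint-free (resp.~very ample).

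The main obstacle is the comparison step in~(2): one must verify that the neat embedding $\P\E \hookrightarrow \P\V_D$ identifies the cones $C_{\sigma_j, F_j}$ on both sides. This amounts to checking that the diagram $D$, the degrees $(\bd_j, 1)$, and the toric base monoids $S_\sigma$ all feed identically into the computation of $\Nef$ via Theorem~\ref{thm-nefCI}, which is ensured by the compatibility of the $\CL$ isomorphism in Proposition~\ref{prop-neat} with pullback of line bundles from $\P^{\nn}$ and the Serre sheaf.
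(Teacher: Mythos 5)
Your proposal is correct and follows essentially the same route as the paper: the coloop hypothesis over $\P^{\nn}$ collapses the intersection in Theorem~\ref{thm-nefCI} to $\bigcap_{j}\bigl((-\Bpf(\P^{\nn}),0)+\Z_{\geq 0}(\bd_j,1)\bigr)=\Bpf(\P\V_D)$, and one then transfers positivity from the smooth toric variety $\P\V_D$ through the neat embedding of Proposition~\ref{prop-neat}. Your explicit saturation argument for (1) (each $S_{\sigma_j,F_j}$ is freely generated by a $\Z$-basis of $\CL(\P\E)$) just makes precise the step the paper leaves implicit.
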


\begin{proof}
It suffices to prove $1$ and $2$.  We consider the split toric vector bundle $\V_D = \bigoplus_{j =1}^m \O(D_j)$ determined by the columns of the diagram $D$.  We have a surjection $\F_D \to \E$ and an embedding $\P\E \to \P\F_D$, and the Cox ring $\RR(\P\F_D)$ presents $\RR(\P\E)$ via the generators $X_i$, $Y_j$. As a consequence, we get inclusions $\Bpf(\P\V_D) \subseteq \Bpf(\P\E)$ as subsets of $\CL(\P\V_D) \cong \CL(\P\E)$.  Our assumption implies that $\Bpf(\P\E) = \bigcap_{j =1}^m (-\Nef(\P^{\nn}), 0) + \Z_{\geq 0}(\bd_j, 1) = \Bpf(\P\V_D)$. This directly implies $1$.  Also, it follows that any Nef class or basepoint-free class on $\P\E$ extends to a Nef or basepoint-free class on $\P\V_D$.  The space $\P\V_D$ is a smooth toric variety, so any ample class is very ample (in fact projectively normal in this case), so the same must be true for $\P\E$.  This proves $2$ and $3$.  
\end{proof}

The argument we use to prove Theorem \ref{thm-Pn} works any time a Mori dream space $X$ has a neat embedding into a smooth toric variety $Z$ inducing an equality $\Bpf(X) = \Bpf(Z)$ in $\CL(X) = \CL(Z)$.  We propose that an embedding with this property be called \emph{neat and tidy}. The next lemma provides a combinatorial check for the coloop property in the hypthosesis of Theorem \ref{thm-Pn}.

\begin{lemma}\label{lem-point}
Let $\E$ be toric vector bundle over $X(\Sigma)$ associated to $(L, D)$, and suppose that every circuit of the ideal $L$ has at least one $0$ entry in each row of $D$. Then if $D_j \neq 0$ there is some maximal $\sigma \in \Sigma$ such that $y_j$ is a coloop of $\MM(\In_\sigma L)$. 
\end{lemma}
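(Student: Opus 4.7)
The plan is to reduce the claim to a single-row statement and then promote it via the apartment condition. Since $w_\sigma := \sum_{i \in \sigma(1)} w_i$ lies in the relative interior of the minimal face of the Gr\"obner fan $\GF(L)$ containing the rows $\{w_i\}_{i \in \sigma(1)}$, we have $\In_\sigma L = \In_{w_\sigma} L$. As $L$ is linear, its circuits form a universal Gr\"obner basis, so the linear part of $\In_{w_\sigma} L$ is spanned by the initial forms $\In_{w_\sigma}(\ell_C)$ over the circuits $C$ of $L$. This translates the lemma into a numerical criterion: $y_j$ is a coloop of $\MM(\In_\sigma L)$ if and only if, for every circuit $C \ni j$ of $L$, one has $(w_\sigma)_j > \min_{k \in C}(w_\sigma)_k$, where $(w_\sigma)_k = \sum_{i \in \sigma(1)} D_{i,k}$.

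Because $D_j \neq 0$, pick $i_0$ with $D_{i_0, j} > 0$ and choose any maximal cone $\sigma \in \Sigma$ containing $\rho_{i_0}$; such a $\sigma$ exists by completeness of $\Sigma$. The hypothesis applied to the single row $w_{i_0}$ and any circuit $C \ni j$ gives $\min_{k \in C} D_{i_0, k} = 0 < D_{i_0, j}$, so the minimum is strictly achieved away from $j$. Thus $y_j$ is already a coloop of $\MM(\In_{w_{i_0}} L)$, and the task is to argue that this property survives the summation over $\sigma(1)$.

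The key step uses the apartment condition: all rows $w_i$ with $i \in \sigma(1)$ lie in a common maximal face $F^{\max}_B$ of $\GF(L)$, a rational polyhedral cone, so the sum $w_\sigma$ lies in $F^{\max}_B$ as well. Because each defining linear inequality $\Phi_\alpha \geq 0$ of $F^{\max}_B$ takes nonnegative values on every summand, $\Phi_\alpha$ vanishes at $w_\sigma$ if and only if it vanishes at every $w_i$. Hence the face $F_{w_\sigma}$ of $\GF(L)$ in whose relative interior $w_\sigma$ sits has its set of tight inequalities equal to the intersection of those of the $F_{w_i}$, so every $F_{w_i}$, and in particular $F_{w_{i_0}}$, is a subface of $F_{w_\sigma}$. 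Equivalently, $\In_{w_\sigma} L$ is a further Gr\"obner specialization of $\In_{w_{i_0}} L$.

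To finish, I invoke the matroid principle that Gr\"obner specialization of a linear ideal preserves coloops: if $J$ is linear and $J' = \In_{w'} J$, then any relation in $J$ whose support is contained in a set $S$ passes, via its initial form, to a relation in $J'$ with support in $S$, so independent sets of $\MM(J')$ are independent in $\MM(J)$ and hence bases of $\MM(J')$ form a subset of the bases of $\MM(J)$. Any element in every basis of $\MM(J)$ therefore remains in every basis of $\MM(J')$. Applied to our situation, $y_j$ is a coloop of $\MM(\In_{w_{i_0}} L)$, hence a coloop of $\MM(\In_{w_\sigma} L) = \MM(\In_\sigma L)$. The main obstacle is establishing the face containment $F_{w_{i_0}} \subseteq \overline{F_{w_\sigma}}$ cleanly; I expect this to follow from the nonnegativity argument outlined above, once the polyhedral structure of the common apartment of $\GF(L)$ is made explicit.
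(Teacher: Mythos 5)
Your argument is correct and is essentially the paper's proof run in its contrapositive, direct form: both hinge on (i) the circuits of $L$ forming a universal Gr\"obner basis, so that coloop-ness of $y_j$ in $\MM(\In_\sigma L)$ reduces to the condition that for every circuit $C \ni j$ the row-minimum over $C$ is $0$ while $D_{i_0 j}>0$, and (ii) the fact that $\In_\sigma L$ is a further initial degeneration of each single-row initial ideal $\In_{w_i}L$, under which coloops persist (equivalently, non-coloops pull back). The face containment you flag at the end as the ``main obstacle'' is in fact already established by your tight-inequality argument inside the common maximal Gr\"obner cone supplied by the apartment condition, so there is no gap.
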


\begin{proof}
We will show that there must be some $\sigma \in \Sigma$ for which $y_j$ is not supported on any linear polynomial in $\In_\sigma(L)$.  If this is the case, we can find a standard basis for $\In_\sigma(L)$ containing $y_j$, and define $p$ by setting $y_j(p) = 1, y_k(p) = 0$ $k \neq j$.

Observe that if $y_j$ is supported on a linear form in $\In_\sigma(L)$ then $y_j$ must be supported on some linear form in every row of $D$ corresponding to the elements of $\sigma(1)$.  It follows that if $y_j$ is supported on a linear form in each $\In_\sigma(L)$, then $y_j$ is supported on at least one linear form in the initial ideal of each row of $D$.  Moreover, the circuits of $L$ form a universal Gr\"obner basis, so we conclude that for each row $\beta$ of $D$ there is a circuit $C_\beta \in L$ such that $y_j$ is supported on the initial form $\In_\beta(C_\beta)$.  But the minimum entry in a row supported on a circuit must be $0$.  We conclude that $D_j = 0$, which is a contradiction.
\end{proof}

The diagram condition in Lemma \ref{lem-point} is satisfied for both sparse bundles and uniform bundles, so we obtain the following corollary.

\begin{corollary}\label{cor-productspositivity}
Let $\E$ be a sparse toric vector bundle or a uniform toric vector bundle over $\P^{\nn}$ with $\RR(\P\E)$ generated in $\Sym$-degree $1$, then:\\

\begin{enumerate}
\item Any Nef class on $\P\E$ is basepoint-free, 
\item Any ample class on $\P\E$ is very ample, 
\item $\P\E$ satisfies Fujita's freeness and ampleness conjectures.\\ 
\end{enumerate}
\end{corollary}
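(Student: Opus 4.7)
The plan is to deduce the corollary directly from Theorem \ref{thm-Pn}: once the coloop hypothesis of that theorem is verified, statements (1), (2), and (3) follow at once. By Lemma \ref{lem-point}, it is enough to show that every circuit of $L$ has at least one zero entry in each row of the non-negative diagram $D$; Lemma \ref{lem-point} will then produce, for each $j$ with $D_j \neq 0$, a maximal face $\sigma$ making $y_j$ a coloop of $\MM(\In_\sigma L)$. Thus the whole corollary reduces to a combinatorial check on $(L, D)$ in each of the two families.

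For the sparse case, Definition \ref{def-sparse} says that the non-negative form of $D$ has at most one nonzero entry per row. A circuit of $L$ is a minimal linear dependence and hence involves at least two columns, so restricted to any row $i$ at most one of those columns carries a nonzero entry, leaving at least one zero entry inside the circuit, exactly what Lemma \ref{lem-point} needs.

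For the uniform case, the $\Sym$-degree $1$ hypothesis together with Theorem \ref{thm-CIcondition} forces $\E$ to be CI, so the uniform CI characterization of Theorem \ref{theorem:uniform CI condition} applies. Specialized to the singleton $A = \{i\}$, it bounds the number of nonzero entries in row $i$ of $D$ by $r - 1$, leaving at least $m - r + 1$ zero entries. Since $\MM(L) = U^r_m$, its circuits are precisely the $(r+1)$-subsets of $\{1, \ldots, m\}$, and the count $(r+1) + (m - r + 1) = m + 2 > m$ forces any such circuit, by pigeonhole, to meet the zero set of every row.

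The one place I anticipate extra care is the edge case $D_j = 0$, where Lemma \ref{lem-point} does not directly apply because the minimum entry of each row along a circuit containing $y_j$ can be attained at $y_j$ itself. I would dispatch this either by reducing to a minimal presentation in which no column of $D$ is zero (a zero column corresponds to a section of $\O_\E(1)$ that can be absorbed without changing $\RR(\P\E)$), or by arguing directly that in both the sparse and uniform settings a zero column still guarantees $y_j$ is a coloop of some $\MM(\In_\sigma L)$ from the structure of $\MM(L)$ itself. Once the coloop hypothesis is established for every $j$, invoking Theorem \ref{thm-Pn} immediately yields (1), (2), and (3).
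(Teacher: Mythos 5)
Your overall route is exactly the paper's: the paper disposes of this corollary in one sentence by observing that the diagram condition of Lemma \ref{lem-point} holds for sparse and uniform bundles, and then invoking Theorem \ref{thm-Pn}. Your sparse case is correct and complete (circuits have at least two elements since $L$ contains no monomials, and each row of the non-negative diagram has at most one nonzero entry). Your attention to the edge case $D_j = 0$, which both Lemma \ref{lem-point} and the paper's one-line argument silently skip, is a genuine improvement in care, and either of your proposed fixes is reasonable.

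The one step that does not hold up as written is in the uniform case: you claim that generation of $\RR(\P\E)$ in $\Sym$-degree $1$ together with Theorem \ref{thm-CIcondition} forces $\E$ to be CI. Theorem \ref{thm-CIcondition} characterizes when $\RR(\P\E)$ is \emph{presented} in $\Sym$-degree $1$ by the specific polynomials $p_1, \ldots, p_{m-r}$, which is strictly stronger than being \emph{generated} in $\Sym$-degree $1$; the corollary's hypothesis does not give you the CI property, so Theorem \ref{theorem:uniform CI condition} is not available. The repair is to get the row-support bound directly from the tropical compatibility condition rather than from the CI condition: each row $w_i$ of the non-negative diagram is a point of $\Trop(L)$, the Bergman fan of $U^r_m$, with at least one zero entry, hence is a non-negative combination of indicator vectors of a chain of proper flats; its support is therefore a proper flat of $U^r_m$, i.e.\ a set of size at most $r-1$. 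Since the circuits of $U^r_m$ are the $(r+1)$-subsets, no circuit fits inside the support of any row, which is precisely the hypothesis of Lemma \ref{lem-point}. (Equivalently: the condition of Lemma \ref{lem-point} says each row's support is an independent set, and proper flats of a uniform matroid are independent.) With that substitution your pigeonhole count is unnecessary and the rest of your argument goes through.
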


\subsection{Toric Vector bundles over $\P^n$}

Projectivized toric vector bundles $\P\E$ over $\P^n$ have Picard rank $2$, so it can be expected that they have even better behavior than bundles over more general toric varieties.  However, it is not always the case that projectivized toric vector bundles over $\P^n$ are Mori dream spaces, for a non-example of a rank $3$ bundle over $\P^2$ see \cite[Example 6.9]{Kaveh-Manon-tvb}.  

If we assume that $\RR(\P\E)$ is generated in $\Sym$-degree $1$, then any monoid $S_{\sigma, F}$ is of the form $\Z_{\geq 0}\{(-1, 0), (d_j, 1)\}$ for $d_j \in \Z_{\geq 0}$.  As a consequence, the effective monoid of any such bundle is generated by $(-1, 0)$ and $(d, 1)$, where $d$ is the maximum of the integers which appear.  Similarly, the basepoint-free monoid of $\P\E$ is generated by $(-1, 0)$ and the minimum of the $d_j$.  

\begin{proposition}\label{prop-peff-bpf-Pn}
Let $\E$ be a toric vector bundle over $\P^n$ with $\RR(\P\E)$ generated in $\Sym$-degree $1$, then any pseudo-effective class on $\P\E$ is effective, and any Nef class on $\P\E$ is basepoint-free.  As a consequence, $\P\E$ satisfies Fujita's freeness conjectures. 
\end{proposition}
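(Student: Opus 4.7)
The plan is to exploit the rank-$2$ structure of $\CL(\P\E) \cong \CL(\P^n)\times \Z \cong \Z^2$. Under this identification, the generator $X_i \in \RR(\P\E)$ has class $(-1, 0)$ and the generator $Y_j$ has class $(d_j, 1)$ with $d_j = \sum_i D_{ij} \in \Z_{\geq 0}$. Every positivity cone of interest lies in the upper half-plane $\{b \geq 0\}$ and shares the extremal ray $\Q_{\geq 0}(-1, 0)$, so each is determined by its upper extremal ray $\Q_{\geq 0}(c, 1)$ for a single integer $c$.

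I would first identify these upper rays. From the formula for $\Eff(\P\E)$ recalled in the excerpt, the extremal rays of $\PsEff(\P\E)$ in $\Q^2$ are $(-1, 0)$ and $(d, 1)$ with $d = \max_j d_j$, since the remaining $(d_j, 1)$ lie inside the cone these span. For the Nef side, Theorem \ref{thm-nefCI} gives $\Nef(\P\E) = \bigcap_{\sigma, F} C_{\sigma, F}$, where each $C_{\sigma, F}$ is itself a two-dimensional cone generated by $(-1, 0)$ together with the $(d_j, 1)$ for $y_j$ not vanishing at $p_{\sigma, F}$. Intersecting these in $\Q^2$, only the smallest upper slope survives, yielding $\Nef(\P\E) = \Q_{\geq 0}(-1, 0) + \Q_{\geq 0}(d^*, 1)$, where $d^*$ is the minimum of those $d_j$ that appear as an upper-ray generator of some $C_{\sigma, F}$.

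With these descriptions in hand, both positivity statements reduce to a single identity. For any integer point $(a, b) \in \PsEff(\P\E)$ we have $b \geq 0$ and $a \leq bd$, and the identity
\[
(a, b) = b(d, 1) + (bd - a)(-1, 0)
\]
writes $(a,b)$ as a non-negative integer combination of the classes of $X_i$ and $Y_{j_0}$ for any index $j_0$ achieving $d_{j_0} = d$. Thus $(a, b) \in \Eff(\P\E)$. For the Nef case, I would first observe that $(d^*, 1) = (d_j, 1) + (d_j - d^*)(-1, 0) \in S_{\sigma, F}$ for every $(\sigma, F)$, by choosing any $j \notin F$, whence $(d^*, 1) \in \Bpf(\P\E)$; the parallel identity
\[
(a, b) = b(d^*, 1) + (bd^* - a)(-1, 0)
\]
then writes any integer Nef class $(a, b)$ as an element of $\Bpf(\P\E)$.

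For the Fujita freeness conclusion, combine Nef $\Rightarrow$ basepoint-free with Mori's cone theorem, cited after Conjecture \ref{conj-Fujita}, which says $K_{\P\E} + mA$ is Nef whenever $m \geq \dim \P\E + 1$. There is no substantive obstacle: the whole argument is two-dimensional convex geometry built on top of the combinatorial description in Theorem \ref{thm-nefCI}. The only care needed is the brief verification that $(d^*, 1)$ belongs to every $S_{\sigma, F}$, which follows from the rank-$2$ observation that any ray $(c, 1)$ with $c \leq d_j$ is reached from $(d_j, 1)$ by adding a non-negative multiple of $(-1, 0)$.
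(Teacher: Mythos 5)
Your argument is correct and is essentially the paper's own: the proof there simply defers to the preceding discussion, which makes exactly your two-dimensional observation that over $\P^n$ each monoid $S_{\sigma, F}$ (and likewise $\Eff(\P\E)$) is generated by $(-1,0)$ together with a single class $(d_j,1)$, hence is saturated, so integral points of the cone intersection lie in the monoid intersection. The only nit is your phrase ``by choosing any $j \notin F$'': to conclude $(d^*,1) = (d_j,1) + (d_j - d^*)(-1,0) \in S_{\sigma,F}$ you must pick $j \notin F$ with $d_j \geq d^*$ (e.g.\ the index realizing the upper extremal ray of $C_{\sigma,F}$), which exists precisely because $d^*$ is the minimum over $(\sigma,F)$ of those maxima — your closing sentence shows you know this, so it is a harmless slip.
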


\begin{proof}
The statements about pseudo-effective and Nef classes following from the discussion above. 
\end{proof}

\begin{example}\label{ex-Hilb2again}
We return to the example of the $2$nd symmetric power of the tangent bundle of $\P^2$ because this bundle shows that the assumption that $\RR(\P\E)$ be generated in $\Sym$-degree $1$ is necessary.  Recall that $\RR(\P\Sym^2\T\P^2)$ requires a generator of $\Sym$-degree $2$. The bundle $\Sym^2\T\P^2$ is rank $3$ over $\P^2$, and the degrees of these generators in $\CL(\P\Sym^2\T_2) \cong \Z \times \Z$ are $\deg(X_i) = (-1, 0), \deg(Y_{ij}) = (2, 1), \deg(Z) = (6, 2)$. In particular, the class $(3, 1) = \frac{1}{2}\deg(Z)$ lies in the pseudo-effective cone but is not effective.  

It can be shown that $\P\Sym^2\T\P^2$ is isomorphic to the Hilbert scheme $\textup{Hilb}^2(\P^2)$ of pairs of points on $\P^2$.  The $0$-locus of $Z$ corresponds to the exceptional divisor of $\textup{Hilb}^2\P^2$ when it is viewed as a blow-up of the symmetric power $\Sym^2\P^2$. 
\end{example}

\begin{example}
This example shows that the assumption that $\P\E$ be a projectivized bundle over a single projective space $\P^n$ is necessary for the effectiveness statement in Proposition \ref{prop-peff-bpf-Pn}.  We consider a rank $2$ bundle over $\P^1\times\P^1$ with $L = \langle y_1+y_2+y_3+y_4 \rangle$ and 
    $$
    D = \begin{bmatrix}
        a_1 & 0 & 0 & 0 \\
        0 & a_2 & 0 & 0 \\
        0 & 0 & a_3 & 0 \\
        0 & 0 & 0 & a_4
    \end{bmatrix}.
    $$ 
We then map 
    $$
\begin{aligned}
\deg(Y_1) =  (a_1,0,1) \\
\deg(Y_2) =  (0,a_2,1) \\
\deg(Y_3) =  (a_3,0,1) \\
\deg(Y_4) =  (0,a_4,1)
\end{aligned}
    $$  
    and have the cones
\\
    
\begin{figure}[H]
\begin{center}
\begin{tikzpicture}[scale=.75,transform shape]
    \tkzInit[xmax=5,ymax=5,xmin=-5,ymin=-5]
    \tkzGrid
   \draw[thick] (-5,0) -- (5,0);
   \draw[thick] (0,5) -- (0,-5);
   \draw[dashed] (0,4) -- (4,0);
    \draw[very thick] (-5,0) -- (2,0) -- (2,-5) node[] at (2,0.25) {$a_1$};
    \draw[very thick] (-5,0) -- (4,0) -- (4,-5) node[] at (4,0.25) {$a_3$};
    \draw[very thick] (-5,2) -- (0,2) -- (0,-5) node[] at (.35, 2) {$a_2$};
    \draw[very thick] (-5,4) -- (0,4) -- (0,-5)  node[] at (.35, 4) {$a_4$};
    \fill[orange,opacity=0.2] (-5,0) --(2,0) -- (2,-5) -- (-5,-5) -- cycle;
    \fill[blue,opacity=0.2] (-5,0) -- (4,0) -- (4,-5) -- (-5,-5) -- cycle;
    \fill[red, opacity=0.2] (-5,2) -- (0,2) -- (0,-5) -- (-5,-5) -- cycle;
    \fill[green, opacity=0.2] (-5,4) -- (0,4) -- (0,-5) -- (-5,-5) -- cycle;
   
  \end{tikzpicture}
  \caption{The class group, $\CL(\P\E)$.}
  \end{center}
  \end{figure}

Where the negative orthant is the (saturated!) Nef cone. However, the integral points on the line connecting $a_3$ and $a_4$ represent classes that are pseudoeffective but not effective.
\end{example}

\subsection{Example where Nef does not imply basepoint free}

We give an example of a sparse toric vector bundle $\F$ (Definition \ref{def-sparse}) over a smooth toric surface $Y(\Sigma)$ whose projectivization $\P\F$ has Nef classes which are not basepoint free.  As a reminder, $\F$ cannot be monomial, and $Y(\Sigma)$ cannot be $\P^2$ or $\P^1\times \P^1$.  We let $\Sigma$ be the smooth fan in $\Q^2$ with integral ray generators $(1,0), (1,1), (1, 2), (0, 1), (-1,0), (0, -1)$, in other words the blow up at a toric fixed point of a blow up at a toric fixed point of $\P^1\times \P^1$.  We let $\F$ be the rank $2$ bundle on $Y(\Sigma)$ corresponding to the following data:

\[L = \langle y_1 + y_2 + y_3 \rangle \subset \C[y_1, y_2, y_3],\]

\[D = \begin{bmatrix} 
c_1 & 0 & 0\\
c_2 & 0 & 0\\
c_3 & 0 & 0\\
c_4 & 0 & 0\\
0 & c_5 & 0\\
0 & 0 & c_6
\end{bmatrix}\]\\

\noindent
Where $c_1 = 3, c_2 = 6, c_3 = 9, c_4 = 2, c_5 = 9, c_6 = 6$. We present the toric vector bundle in this way to emphasize that the $c_i$ can be given alternative non-negative values while maintaining properties of $\P\F$. Toric vector bundles of this form have previously found use as counterexamples to global generation in work of N\o dland \cite{Nodland}.  The Cox ring $\RR(\P\F)$ is presented as follows:

\[\C[x_1,x_2,x_3,x_4,x_5,x_6, Y_1,Y_2,Y_3]/\langle x_1^3x_2^6x_3^9x_4^2Y_1 + x_5^9Y_2 + x_6^6Y_3\rangle.\]\\

We let the classes of the toric divisors corresponding to the rays of $\Sigma$ be denoted $e_i \in \CL(Y(\Sigma))$ $1 \leq i \leq 6$.  The class group $\CL(Y(\Sigma))$ is freely generated by $e_1, e_2, e_3, e_4$ with $e_5 = e_1 + e_2 + e_3$ and $e_6 = e_2 + 2e_3 + e_4$. As a consequence, the generators of $\RR(\P\F)$ have the following classes in $\CL(\P\F) \cong \CL(Y(\Sigma)) \times \Z$: $[x_i] = (-e_i, 0)$, $[Y_1] = (3e_1 + 6e_2 + 9e_3 + 2e_4, 1)$, $[Y_2] = (9e_1 + 9e_2 + 9e_3, 1)$, $[Y_6] = (6e_2 + 12e_3 + 6e_4, 1)$.

We label the cones of $\Sigma$ counter clockwise: $\sigma_1 = \Q_{\geq 0}\{(1,0),(1,1)\}$, $\sigma_2 = \Q_{\geq 0}\{(1,1),(1,2)\}$, $\sigma_3 = \Q_{\geq 0}\{(1,2),(0,1)\}$, $\sigma_4 = \Q_{\geq 0}\{(0,1),(-1,0)\}$, $\sigma_5 = \Q_{\geq 0}\{(-1,0),(0,-1)\}$, and $\sigma_6 = \Q_{\geq 0}\{(0,-1),(1,0)\}$.

\vspace{.15in}

    \begin{center}
\begin{tikzpicture}[scale = 2.5]

\draw (0,0) -- (1,0) node[] at (0.8,0.28) {$\sigma_1$};
\draw (0,0) -- (.8,.8) node[] at (0.53,0.72){$\sigma_2$};
\draw (0,0) -- (.5,1) node[] at (.15, .75) {$\sigma_3$};
\draw (0,0) -- (0,1) node[] at (-.55,.55) {$\sigma_4$}; 
\draw (0,0) -- (-1,0) node[] at (-.55, -.55) {$\sigma_5$};
\draw (0,0) -- (0,-1) node[] at (.55,-.55) {$\sigma_6$};
\end{tikzpicture}
\end{center}

\vspace{.15in}

Each initial ideal $\In_{\sigma_i}L$ has two minimal elements which we denote with a $0,1$ vector indicating the support of the element.  For example, over $\sigma_1$ the initial ideal is $\In_{\sigma_1}L = \langle y_2 + y_3\rangle$, which has minimally supported solution types $100$ (e.g. $(3, 0,0)$ and $011$ (e.g. $(0, -5, 5)$).  There are $12$ corresponding monoids.

\[S_1^{100} = \langle (-e_3,0),(-e_4,0),(-e_5,0),(-e_6,0), (3e_1 + 6e_2 + 9e_3 + 2e_4, 1) \rangle \]

\[S_1^{011} = \langle (-e_3,0),(-e_4,0),(-e_5,0),(-e_6,0), (9e_1 + 9e_2 + 9e_3, 1), (6e_2 + 12e_3 + 6e_4, 1) \rangle\]

\[S_2^{100} = \langle (-e_1,0),(-e_4,0),(-e_5,0),(-e_6,0), (3e_1 + 6e_2 + 9e_3 + 2e_4, 1) \rangle \]

\[S_2^{011} = \langle (-e_1,0),(-e_4,0),(-e_5,0),(-e_6,0), (9e_1 + 9e_2 + 9e_3, 1), (6e_2 + 12e_3 + 6e_4, 1) \rangle\]

\[S_3^{100} = \langle (-e_1,0),(-e_2,0),(-e_5,0),(-e_6,0), (3e_1 + 6e_2 + 9e_3 + 2e_4, 1) \rangle \]

\[S_3^{011} = \langle (-e_1,0),(-e_2,0),(-e_5,0),(-e_6,0), (9e_1 + 9e_2 + 9e_3, 1), (6e_2 + 12e_3 + 6e_4, 1) \rangle\]

\[S_4^{100} = \langle (-e_1,0),(-e_2,0),(-e_3,0),(-e_6,0), (3e_1 + 6e_2 + 9e_3 + 2e_4, 1) \rangle \]

\[S_4^{010} = \langle (-e_1,0),(-e_2,0),(-e_3,0),(-e_6,0), (9e_1 + 9e_2 + 9e_3, 1) \rangle\]

\[S_5^{010} = \langle (-e_1,0),(-e_2,0),(-e_3,0),(-e_4,0), (9e_1 + 9e_2 + 9e_3, 1), \rangle \]

\[S_5^{001} = \langle (-e_1,0),(-e_2,0),(-e_3,0),(-e_4,0), (6e_2 + 12e_3 + 6e_4, 1) \rangle\]

\[S_6^{100} = \langle (-e_2,0),(-e_3,0),(-e_4,0),(-e_5,0), (3e_1 + 6e_2 + 9e_3 + 2e_4, 1), \rangle \]

\[S_6^{001} = \langle (-e_2,0),(-e_3,0),(-e_4,0),(-e_5,0), (6e_2 + 12e_3 + 6e_4, 1) \rangle\]\\

\noindent
Every monoid except $S_1^{011}, S_2^{011}, S_3^{011}$ is smooth.  Intuitively, it is the distance between the rays generated by $[Y_2]$ and $[Y_3]$ which is allowing for integral points in the convex hulls of these monoids to be missed. 

We let $C_i^{***}$ be the cone generated by $S_i^{***}$ so that $\Nef(\P\F) = \bigcap C_i^{***}$ and $\Bpf(\P\F) = \bigcap S_i^{***}$. We get the following vectors as the generators of our Hilbert basis:
$$
\begin{bmatrix} -2 \\ -2 \\ -2 \\ -1 \\ 0 \end{bmatrix}, 
\begin{bmatrix}  0 \\ 5 \\ 10 \\ 0 \\ 2 \end{bmatrix},
\begin{bmatrix} 0 \\ 1 \\ 2 \\ 0 \\ 1 \end{bmatrix},
\begin{bmatrix} 0 \\ 2 \\ 3 \\ 0 \\ 1 \end{bmatrix},
\begin{bmatrix} 0 \\ 3 \\ 5 \\ 0 \\ 1 \end{bmatrix},
\begin{bmatrix} -1 \\ -1 \\ -1 \\ 0 \\ 0 \end{bmatrix},
\begin{bmatrix} 0 \\ 2 \\ 4 \\ 0 \\ 1 \end{bmatrix},
\begin{bmatrix} -1 \\ 2 \\ 5 \\ 0 \\ 1 \end{bmatrix}
\begin{bmatrix} 0 \\ 3 \\ 3 \\ -1 \\ 1 \end{bmatrix},
\begin{bmatrix}  -1 \\ -1 \\ -2 \\ -1 \\ 0 \end{bmatrix},
\begin{bmatrix} 0 \\ -1 \\ -2 \\ -1 \\ 0 \end{bmatrix},
\begin{bmatrix}  0 \\ 3 \\ 4 \\ 0 \\ 1 \end{bmatrix}.
$$

     
We the check which, if any, of the basis elements is not able to be written as an integral combination of the ray generators that define each of our unsaturated cones, $C_1^{011}, C_2^{011}, C_3^{011}$. We get that the pairs 
$$
\left \{ \begin{bmatrix} 0 \\ -1 \\ -2 \\ -1 \\ 0 \end{bmatrix}, C_1^{011} \right \} \quad 
\left \{ \begin{bmatrix}  0 \\ 5 \\ 10 \\ 0 \\ 2 \end{bmatrix}, C_2^{011} \right \} \quad 
\left \{ \begin{bmatrix} 0 \\ -1 \\ -2 \\ -1 \\ 0 \end{bmatrix}, C_3^{011} \right \}. 
$$
have exactly this issue. Therefore, these classes are not basepoint free.

\section{Fano projectivized toric vector bundles}\label{sec-Fano}

We have shown that $\Nef(\P\E)$ can be computed directly when $\E$ is a monomial bundle with $\RR(\P\E)$ generated in $\Sym$-degree 1.  Moreover, when $\RR(\P\E)$ is a complete intersection it is easy to compute the anti-canonical class. With these ingredients, we have what we need to test if $\P\E$ Fano. In this section we obtain a complete classification of Fano \emph{Kaneyama} bundles, see Definition \ref{def-genkaneyama}.  As a consequence we find a new proof of a result of Mori \cite{Mori} in the toric case (Remark \ref{rem-Mori}).

\subsection{(non)-Fano diagonal bundles}

Let $\pi: \E \to X(\Sigma)$ be the toric vector bundle associated to the pair $(L, D)$, where $D$ is a diagonal matrix.  The next Proposition characterizes when a fan $\Sigma$ and a fixed linear ideal $L$ admit a diagonal bundle. 

\begin{proposition}\label{prop-diagonal}
Fix a linear ideal $L \subset \C[y_1, \ldots, y_m]$, an $m \times m$ diagonal matrix $D$ with positive diagonal entries, and a complete polyhedral fan $\Sigma \subset N_\Q$.  The data $(L, D)$ defines a toric vector bundle over $Y(\Sigma)$ if and only if for every collection $\sigma(1) \subset \Sigma(1)$ the corresponding $\{y_i \mid i \in \sigma(1)\}$ form an independent set of $\MM(L)$. 
\end{proposition}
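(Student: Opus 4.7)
The plan is to check directly the two conditions defining membership in $\Delta(\Sigma, L)$ as recalled in Section~\ref{sec-background}: each row $w_i$ of $D$ lies in $\Trop(L)$, and for every face $\sigma \in \Sigma$ the rows $\{w_i : \rho_i \in \sigma(1)\}$ share a common apartment $A_\B$. Since $D$ is diagonal with positive entry $a_i$ in row $i$, we have $w_i = a_i e_i$, a positive scalar multiple of a standard basis vector. The whole argument hinges on the following apartment characterization: the vectors $w_{i_1},\ldots, w_{i_k}$ lie in a common apartment $A_\B$ if and only if $\{y_{i_1}, \ldots, y_{i_k}\} \subseteq \B$.

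Granted this characterization, both of the conditions defining $\Delta(\Sigma, L)$ translate into the same combinatorial requirement. Since apartments cover $\Trop(L)$, the condition $w_i \in \Trop(L)$ is equivalent to $\{y_i\}$ extending to a basis of $\MM(L)$, i.e.\ being independent. Likewise, the rows $\{w_i : \rho_i \in \sigma(1)\}$ share an apartment if and only if $\{y_i : \rho_i \in \sigma(1)\}$ extends to a common basis of $\MM(L)$, i.e.\ is independent. Ranging over all faces of $\Sigma$ (the rays themselves covering the tropical containment condition) then yields the biconditional in the proposition.

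To establish the apartment characterization I would unpack the defining equation of $A_\B$, namely $v_\ell = \min\{v_k : y_k \text{ appears in the } \B\text{-expression of } y_\ell\}$ for every $\ell$. For $(\Leftarrow)$, assume $\{y_{i_1},\ldots,y_{i_k}\} \subseteq \B$, fix $j$, and verify the equation for $w_{i_j}$ at each index $\ell$: it is tautological when $y_\ell \in \B$, and when $y_\ell \notin \B$ the left-hand side equals $(w_{i_j})_\ell = 0$ while the right-hand side is a minimum of values in $\{0, a_{i_j}\}$, realized at $0$ whenever some index in the $\B$-expression of $y_\ell$ differs from $i_j$. For $(\Rightarrow)$, suppose $w_{i_j} \in A_\B$ but $y_{i_j} \notin \B$; the $\B$-expression of $y_{i_j}$ then involves only basis elements distinct from $y_{i_j}$, so the right-hand side of the equation at $\ell = i_j$ is $0$, contradicting the left-hand side $a_{i_j} > 0$. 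The main obstacle is the bookkeeping in the $(\Leftarrow)$ direction, namely verifying that the minimum in the apartment equation really is $0$ rather than $a_{i_j}$; this reduces to showing that the $\B$-expression of $y_\ell$ involves some basis element other than $y_{i_j}$, which is automatic except in the degenerate case where $y_\ell$ and $y_{i_j}$ are parallel in $\MM(L)$, a case already excluded by the independence condition at any face containing both $\rho_{i_j}$ and $\rho_\ell$.
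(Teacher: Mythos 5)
Your overall strategy is the same as the paper's: since each row is $w_i = a_i e_i$, membership of $D$ in $\Delta(\Sigma,L)$ reduces to checking the apartment equations $v_\ell = \min\{v_k : y_k \text{ in the } \B\text{-expression of } y_\ell\}$ for a basis $\B$ containing $\{y_i : \rho_i \in \sigma(1)\}$, and your $(\Rightarrow)$ direction (if $a_{i}e_{i} \in A_\B$ then $y_{i} \in \B$) is a clean substitute for the paper's argument via circuits of the initial ideals $\In_{R_i}(L)$. Most of the verification is fine.

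There is one genuine flaw, precisely at the step you flag as the ``degenerate case.'' To conclude that the minimum on the right-hand side is $0$ you need the $\B$-expression of each $y_\ell \notin \B$ to involve some basis element other than $y_{i_j}$, i.e.\ that $y_\ell$ is not parallel to $y_{i_j}$ in $\MM(L)$. You claim this parallelism is ``excluded by the independence condition at any face containing both $\rho_{i_j}$ and $\rho_\ell$,'' but the hypothesis only constrains sets $\sigma(1)$ for actual faces $\sigma \in \Sigma$, and there need be no face containing both rays. Concretely, take $\Sigma$ the fan of $\P^1$ and $L = \langle y_1 - y_2\rangle \subset \C[y_1,y_2]$: every $\{y_i\}$ with $i \in \sigma(1)$ is independent, yet $w_1 = (a_1,0) \notin \Trop(L)$, so no positive diagonal $D$ lies in $\Delta(\Sigma,L)$ and the biconditional fails. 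The statement is only true under the standing convention (invoked explicitly in the paper's proof, though absent from the statement) that $L$ contains no monomials or binomials, i.e.\ that $\MM(L)$ is simple; under that convention every fundamental circuit has at least two basis elements in its support and your minimum is indeed $0$. You should replace your appeal to the independence hypothesis with this assumption. A smaller point: as written, membership in $A_\B$ also requires membership in $\Trop(L)$, so in the $(\Leftarrow)$ direction you should note (as the paper implicitly does) that any point satisfying the $\B$-apartment equations automatically lies in $\Trop(L)$, since $A_\B$ is the piecewise-linear section of the coordinate projection to $\Q^\B$.
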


\begin{proof}
 Let $\sigma \in \Sigma$ be a face with rays $\rho_1, \ldots, \rho_d \in \sigma(1)$. Let $R_1, \ldots, R_d$ be the corresponding rows of $D$.  Each $R_i$ contains a single non-zero entry; without loss of generality we suppose that the non-zero entry of $R_i$ occurs in the $i$-th position along the row.  Suppose that $D \in \Delta(L, \Sigma)$, then $R_1, \ldots, R_d$ are all contained in a common apartment of $\Trop(L)$. We claim that $\In_\sigma(L)$ is generated by circuits which do not contain $y_1, \ldots, y_d$.  The ideal $\In_\sigma(L)$ is itself an initial ideal of $\In_{R_i}(L)$ for any $i$.  But the circuits of $\In_{R_i}(L)$ cannot contain $y_i$. This is the case for $1 \leq i \leq d$. It follows that $y_1, \ldots, y_d$ are independent.  Now suppose that for every $\sigma \in \Sigma$ the $\{y_i \mid i \in \sigma(1)\}$ form an independent set. Let $\B$ be a basis which contains this set, and consider the maximal face $C_\B$ of the Gr\"obner fan of $L$ corresponding to $\B$. We claim $R_i \in C_\B$ for each $i \in \sigma(1)$. This amounts to checking that the value of each $y_j$ given by the prevaluation defined by $R_i$ is computable on the $\B$-expression for $y_j$. We've assumed that $L$ contains no binomials or monomials, so any expression for $y_j$ $j \neq i$ involves an element other than $y_i$.  As the only non-zero entry of $R_i$ is at the $i$-th spot, this means $y_j$'s value is the minimum of the values of the elements appear in its $\B$-expression.  This proves that $R_i$ $i \in \sigma(1)$ all belong to the apartment of $\B$.  
\end{proof}

If $D$ is diagonal and $rank(\E) = m$, then it is straightforward to see that $\E$ is a split toric vector bundle.  At the other extreme are the tangent bundles $\T X(\Sigma)$ of smooth toric varieties, where $D$ is the $n \times n$ identity matrix.  The next result is concerned with bundles of this latter form. 
 
\begin{definition}\label{def-genkaneyama}
We say $\E$ is a Kaneyama bundle if $\E$ has rank equal to the dimension of $X(\Sigma)$, and has non-negative diagram a diagonal matrix. 
\end{definition}

\noindent
The terminology ``Kaneyama bundle'' is in reference to the work of Kaneyama \cite{Kaneyama88}, where it is shown that half of the irreducible toric vector bundles of rank $n$ over $\P^n$ are of this form. The other half are the dual bundles of Kaneyama bundles. 

By Proposition \ref{prop-diagonal}, an ideal $L$ and a (positive) diagonal matrix $D$ defines a Kaneyama bundle over $X(\Sigma)$ precisely when $\{y_i \mid i \in \sigma(1)\}$ is a basis of $\MM(L)$ for every maximal face $\sigma \in \Sigma$. For now on we fix a Kaneyama bundle $\E$ and let $d_1 \leq \cdots \leq d_n$ denote the values along the diagonal of $D$. Any Kaneyama bundle is sparse, so it is straightforward to get an expression for the anticanonical class of $\P\E$:\\

\[-K_{\P\E} =  (\sum_{i =1}^n (d_i -1) \be_i, r) \]\\

\noindent
We wish to know when $-K_{\P\E} \in \Nef(\P\E)$. The following cones will be useful for this computation. Let $i \in \sigma(1)$ and let \\

\[S_{\sigma, j} = \sum_{i \notin \sigma(1)} \Q_{\geq 0}(\be_i, 0) + \Q_{\geq 0}(d_j\be_j, 1) \subset \CL(\P\E)_\Q.\]\\

\begin{proposition}\label{prop-kaneyama-monomial}
Let $\E$ be a Kaneyama bundle over $X(\Sigma)$ associated to the positive diagonal matrix $D \in \Delta(\Sigma, L)$, then $\E$ is a monomial bundle, and: 

\[\Nef(\P\E) = \bigcap_{i \in \sigma(1)} S_{\sigma, i}.\]\\

\noindent
In particular, $\Bpf(\P\E)$ is saturated, any Nef class is basepoint free, and $\P\E$ satisfies Fujita's freeness conjecture. 
\end{proposition}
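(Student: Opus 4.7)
My plan is to first show $\E$ is monomial by computing $\In_\sigma L$ directly, then combine sparseness with Proposition \ref{prop-monomial} to deduce three of the four conclusions, and finally use Theorem \ref{thm-nefCI} to obtain the explicit formula for $\Nef(\P\E)$.

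For monomiality, I would fix a maximal face $\sigma \in \Sigma(d)$ and use Proposition \ref{prop-diagonal} to conclude that $\{y_i : i \in \sigma(1)\}$ is a basis of $\MM(L)$. For each $j \notin \sigma(1)$, the fundamental circuit with respect to this basis has the form $y_j - \sum_{i \in \sigma(1)} c_i y_i$; with respect to the weight $w_\sigma = \sum_{i \in \sigma(1)} d_i \be_i$, which is strictly positive on $\sigma(1)$ and zero elsewhere, the minimum weight on this polynomial is $0$ and is realized only by $y_j$, so the initial form is the monomial $y_j$. Since circuits form a universal Gr\"obner basis of any linear ideal, and every circuit of $L$ must have support meeting the complement of $\sigma(1)$ (otherwise $\sigma(1)$ would be dependent), every circuit's $w_\sigma$-initial form lies in $\langle y_j : j \notin \sigma(1)\rangle$. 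The fundamental circuits individually place each such $y_j$ in $\In_\sigma L$, so I conclude $\In_\sigma L = \langle y_j : j \notin \sigma(1)\rangle$, a monomial ideal.

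Since the diagonal form of $D$ places $\E$ in the class of sparse bundles (Definition \ref{def-sparse}), the cited \cite[Corollary 6.7]{Kaveh-Manon-tvb} gives that $\E$ is CI and that $\RR(\P\E)$ is generated in $\Sym$-degree $1$. With monomiality established, Proposition \ref{prop-monomial} then immediately supplies three of the four conclusions: saturation of $\Bpf(\P\E)$, the implication Nef $\Rightarrow$ basepoint-free, and Fujita's freeness conjecture.

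To finish I would apply Theorem \ref{thm-nefCI}. In $\MM(\In_\sigma L)$ each $y_j$ with $j \notin \sigma(1)$ is a loop while $\sigma(1)$ is the unique basis, so the maximal non-trivial flats are precisely $F_j = \{1,\ldots,m\}\setminus\{j\}$ for $j \in \sigma(1)$. The associated point $p_{\sigma, F_j} \in \P\E_\sigma \cong \P^{d-1}$ is the coordinate point at which $y_j \neq 0$ and all other $y_i = 0$; consequently the only Cox ring generators nonvanishing there are $Y_j$ and the $X_k$ for $k \notin \sigma(1)$. Reading off their $\CL(\P\E)$-degrees produces the cone $S_{\sigma, j}$, and Theorem \ref{thm-nefCI} then delivers $\Nef(\P\E) = \bigcap_{\sigma, i \in \sigma(1)} S_{\sigma, i}$. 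The main delicate point in this argument is the precise identification of $\In_\sigma L$, since in principle non-fundamental circuits could contribute non-monomial initial forms; however, the weight $w_\sigma$ is so sharply concentrated on $\sigma(1)$ that every such initial form collapses to a $\C$-linear combination of the loops already present.
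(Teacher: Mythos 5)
Your proof is correct and follows essentially the same route as the paper's: both identify $\In_\sigma L$ as the monomial ideal generated by the $y_j$ with $j \notin \sigma(1)$ by taking initial forms of the fundamental circuits of the basis $\{y_i : i \in \sigma(1)\}$, and then conclude via Proposition \ref{prop-monomial} (and Theorem \ref{thm-nefCI} for the explicit $\Nef$ formula). The only cosmetic difference is that you work with the explicit weight $\sum_{i \in \sigma(1)} w_i$ and the universal Gr\"obner basis property of circuits, whereas the paper picks an abstract $w \in \tau^\circ$ and argues through the iterated initial ideals $\In_w(\In_{w_i} L) = \In_w L$.
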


\begin{proof}
We show that Kaneyama bundle is monomial, and that the set $\{y_i\mid i \in \sigma(1)\}$ forms the unique basis of $\MM(\In_\sigma L)$. Proposition \ref{prop-monomial} then implies the statement.   Fix a maximal face $\sigma$ and let $\tau$ be the face of the Gr\"obner fan $\GF(L)$ containing the rows $w_i = (0, \ldots, d_i, \ldots 0)$ for $i \in \sigma(1)$. It suffices to show that a $w \in \tau^\circ$ we have $y_j \in \In_w L$ for $j \notin \sigma(1)$. By assumption, the $y_i$, $i \in \sigma(1)$ form a basis of $\MM(L)$.  We let $y_j = \sum C_i y_i$ be the circuit expressing $y_j$ as a linear combination of the $y_i$ $i \in \sigma(1)$. We must have $\In_w(\In_{w_i} L) = \In_w L$, so for all $i \in \sigma(1)$, the initial form of $y_j - \sum C_i y_i$ cannot be supported on $y_i$, so $y_j \in \In_w L$. 
\end{proof}

If $-K_{\P\E}$ is to be a Nef class on $\P\E$ we must have that $-K_{\P\E}$ is contained in each $S_{\sigma, i}$.  This means that for each $i \in \sigma(1)$ there is an $m(i) \in M$ and $a_j \geq 0$ for $j \notin \sigma(1)$ such that:\\

\[\sum_{k =1}^n (d_k-1)\bee_k + \sum_{j \notin \sigma(1)} a_j \bee_j - rd_i \bee_i = \sum_{k =1}^n \langle \rho_k, m(i) \rangle \bee_k.\]\\

\noindent
where $\bee_k$ now denotes the $k$-th standard basis vector in $\Q^n$.

Now we assume $\Sigma$ is a smooth fan, $\sigma \in \Sigma$ is a maximal face.  We let $\epsilon_i \in M, i \in \sigma(1)$ be a dual basis to $\rho_i, i \in \sigma(1)$. For the sake of simplicity we assume $\sigma(1) = \{1, \ldots, d\}$.  Considering $1 \in \sigma(1)$, we write $m(1) = \sum m_\ell(1) \epsilon_\ell$.  Specializing to the coefficient of $\bee_1$ for $i \in \sigma(1)$ we get the following:\\

\[(d_1 -1) - rd_1 = m_1(1), \ \ (d_2 -1) = m_2(1), \ \ \ldots, (d_r-1) = m_r(1).\]\\

This leads to the matrix $X_\sigma$, with $\ell, i$-th entry $m_\ell(i)$:\\

$$X_\sigma = \begin{bmatrix}  
(1-r)d_1 -1  & d_2 -1 & \cdots & d_r -1 \\
d_1 -1 & (1-r)d_2 - 1 & \cdots & d_r -1 \\
\vdots & \vdots & & \vdots \\
d_1 -1 & d_2 -1 & \cdots & (1-r)d_r -1\\ 
\end{bmatrix}.$$\\

Now for $\rho_j$ $j \notin \sigma(1)$ we write $\rho_j = \sum t_i \rho_i$. The quantity $\langle \rho_j, m(i)\rangle$ is then the dot product of $(t_1, \ldots, t_r)$ with the $i$-th row of $X_\sigma$. This quantity equals $(d_j -1) + a_j \geq 0$, so we conclude that each $\rho_k$ lies in the cone with $H$-description $X_\sigma \bt \geq 0$.

\begin{proposition}\label{prop-nonnegative}
Any solution to $X_\sigma \bt \geq 0$ must be a member of the negative orthant. 
\end{proposition}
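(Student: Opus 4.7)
The plan is to reformulate the system $X_\sigma \bt \geq 0$ in terms of the single scalar $s := \sum_{k=1}^r (d_k - 1) t_k$, deduce $s \leq 0$ via a judicious nonnegative combination of the inequalities, and finally read off the componentwise non-positivity of $\bt$.

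First, I would simplify each row of $X_\sigma \bt$. Rewriting $(1-r)d_i - 1 = (d_i - 1) - r d_i$ allows the diagonal term $[(1-r)d_i - 1] t_i$ to combine with the off-diagonal terms $\sum_{k \neq i}(d_k - 1) t_k$ into $(X_\sigma \bt)_i = s - r d_i t_i$. Thus the system $X_\sigma \bt \geq 0$ is equivalent to the family
\[ r d_i t_i \leq s, \quad i = 1, \ldots, r. \]

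Second, I would multiply the $i$-th inequality by the nonnegative weight $(d_i - 1)/d_i$, which is valid because in the Kaneyama setting we have $d_i \geq 1$, and then sum over $i$. The right-hand side becomes $r \sum_i (d_i - 1) t_i = rs$, while the left-hand side becomes $s \sum_i (d_i - 1)/d_i = s\bigl(r - \sum_i 1/d_i\bigr)$. The resulting inequality $rs \leq s\bigl(r - \sum_i 1/d_i\bigr)$ simplifies to $s \sum_i 1/d_i \leq 0$, which forces $s \leq 0$ since $\sum_i 1/d_i > 0$. Substituting back into $r d_i t_i \leq s$ then yields $t_i \leq 0$ for every $i$, completing the proof.

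I expect the main subtlety to be the choice of weights in the second step. Obvious candidates such as uniform weights or weights $1/d_i$ produce only the weaker consequence $\sum_i t_i \leq 0$, which does not suffice for componentwise non-positivity. The weights $(d_i - 1)/d_i$ succeed precisely because they reproduce the defining expression $s = \sum_i (d_i - 1) t_i$ on the right-hand side, creating a self-referential bound on $s$ itself that pins down its sign.
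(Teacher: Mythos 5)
Your proof is correct, and it takes a genuinely different route from the paper's. You exploit the structural identity $(X_\sigma\bt)_i = s - rd_it_i$ with $s = \sum_k (d_k-1)t_k$, and then close the loop with the weights $(d_i-1)/d_i$, which reproduce $s$ on one side and yield the self-referential bound $s\sum_i 1/d_i \leq 0$, hence $s \leq 0$ and $t_i \leq s/(rd_i) \leq 0$. The paper instead rescales the columns of $X_\sigma$ by the diagonal matrix with entries $1/(d_i-1)$ to get a matrix $W_\sigma$ with off-diagonal entries $1$ and negative diagonal entries $\alpha_i$, and then argues about the signs of the entries of an extremal ray of the cone $\{W_\sigma\bt \geq 0\}$ by taking differences of rows. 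Your approach has two concrete advantages: it applies directly to an arbitrary solution rather than reducing to extremal rays, and it handles the degenerate case $d_i = 1$ uniformly (your weight simply vanishes there), whereas the paper's rescaling divides by $d_i - 1$ and so must first assume $d_i \geq 2$ and then dispose of the $d_i = 1$ case by a separate induction. Both arguments share the implicit standing hypothesis $d_i \geq 1$, which you correctly flag and which holds because the diagram of a Kaneyama bundle is taken to be a positive diagonal matrix; note that the statement genuinely fails if some $d_i = 0$.
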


\begin{proof}
First we assume that $d_i \geq 2$.  Let $K$ be a diagonal matrix with all positive entries. Note that $K\bt$ lies in the negative orthant if and only if $\bt$ lies in the negative orthant.  Consequently, we can consider solutions to $X_\sigma K \bt \geq 0$; this reduces us to the following matrix:\\ 

$$W_\sigma = \begin{bmatrix}  
\alpha_1  & 1 & \cdots & 1 \\
1 & \alpha_2 & \cdots & 1 \\
\vdots & \vdots & & \vdots \\
1 & 1 & \cdots & \alpha_r\\ 
\end{bmatrix},$$\\

\noindent
where $\alpha_i = \frac{(1-r)d_i -1}{d_i -1} < 0$.  Let $\eta$ be an extremal ray of this cone, and suppose that all but the first entry of $W_\sigma \eta$ is $0$. Applying the difference of any two rows on $\eta$ then shows that the signs on the $2$-nd through $r$-th entries of $\eta$ must coincide. Comparing with the $1$-st entry shows that all entries of $\eta$ must be negative. 

Now, if say $d_1 = 1$, and all other entries are $\geq 2$, the argument above shows the signs of a solution all agree in entries $\geq 2$, so the same must be true for the first entry.  An induction argument now proves this for any number of $d_i = 1$. 
\end{proof}

\begin{corollary}\label{cor-nonnegative}
Let $\E$ be as above, and let $\Sigma$ be smooth, then if $-K_{\P\E} \in \Nef(\P\E)$ we must have that for any maximal face $\sigma \in \Sigma$ a ray $\rho_k$ not contained in $\sigma$ is contained in its negative. 
\end{corollary}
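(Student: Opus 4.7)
The plan is to simply read off the corollary from the geometric setup already laid out in the paragraphs preceding Proposition \ref{prop-nonnegative}, together with that proposition. The main observation is that the pointwise inequality $X_\sigma \bt \ge 0$ has already been shown to hold for every ray $\rho_k$ with $k \notin \sigma(1)$, once we assume $-K_{\P\E}$ is Nef.

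More precisely, first I would fix a maximal face $\sigma \in \Sigma$ and a ray $\rho_k$ with $k \notin \sigma(1)$. Since $\Sigma$ is smooth, the generators $\rho_i$, $i \in \sigma(1)$, form a $\Z$-basis of $N$, so we may write $\rho_k = \sum_{i \in \sigma(1)} t_i \rho_i$ uniquely with $t_i \in \Z$. Assembling these into $\bt \in \Q^r$, the claim to verify is that $\bt$ lies in the negative orthant.

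Next I would invoke the assumption $-K_{\P\E} \in \Nef(\P\E)$. By Proposition \ref{prop-kaneyama-monomial}, $\Nef(\P\E) = \bigcap_{i \in \sigma(1)} S_{\sigma,i}$, so in particular $-K_{\P\E}$ lies in $S_{\sigma,i}$ for every $i \in \sigma(1)$. Translating this containment into the character lattice using the dual basis $\epsilon_i$ and unwinding the equality displayed just before Proposition \ref{prop-nonnegative} yields exactly the linear inequality
\[
X_\sigma \bt \;=\; \bigl(\langle \rho_k, m(i)\rangle\bigr)_{i \in \sigma(1)} \;\geq\; (d_k - 1)\cdot \mathbf{1} + (\text{non-negative slack}) \;\ge\; 0,
\]
since $d_k \ge 1$ and the slack $a_j \ge 0$. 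So the hypothesis of Proposition \ref{prop-nonnegative} is satisfied by $\bt$.

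Finally, applying Proposition \ref{prop-nonnegative} we conclude that $\bt$ lies in the negative orthant, i.e.\ $t_i \le 0$ for all $i \in \sigma(1)$. Therefore $\rho_k = \sum_{i \in \sigma(1)} t_i \rho_i$ is a non-negative integral combination of the $-\rho_i$, placing it in the cone $-\sigma$, as desired. I do not anticipate any obstacle here: all of the work is really carried by Proposition \ref{prop-nonnegative}, and the corollary amounts to re-packaging its conclusion in geometric language. The only mild care needed is bookkeeping with the non-negative slack terms $a_j$ and the shift by $(d_k - 1)$, which weaken $X_\sigma \bt \ge 0$ to exactly the form used in Proposition \ref{prop-nonnegative}.
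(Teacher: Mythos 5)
Your proof is correct and is essentially the paper's own argument: the corollary is obtained exactly by noting that the Nef hypothesis forces the coordinate vector $\bt$ of each ray $\rho_k$, $k\notin\sigma(1)$, into the cone $X_\sigma\bt\geq 0$ (via $\langle\rho_k,m(i)\rangle=(d_k-1)+a_k\geq 0$), and then invoking Proposition \ref{prop-nonnegative} to place $\bt$ in the negative orthant, i.e.\ $\rho_k\in-\sigma$. Your bookkeeping of the slack terms and the shift by $d_k-1\geq 0$ is exactly what the paper does in the paragraph preceding that proposition.
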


Corollary \ref{cor-nonnegative} places very strong restrictions on the fan $\Sigma$.  

\begin{lemma}\label{lem-isproduct}
Let $\Sigma$ be a smooth fan with the property that for any maximal face $\sigma \in \Sigma$ a ray $\rho_k$ not contained in $\sigma$ is contained in its negative, then $\Sigma$ is the fan of a product of projective spaces. 
\end{lemma}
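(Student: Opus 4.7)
The plan is to convert the pointwise hypothesis into a uniform nonpositivity constraint on wall relations, and then use this to pin down the fan structure near a fixed maximal cone. For any wall $\tau$ of $\Sigma$ between adjacent maximal cones $\sigma_1 = \tau + \Q_{\geq 0}\rho^+$ and $\sigma_2 = \tau + \Q_{\geq 0}\rho^-$, smoothness gives a wall relation $\rho^+ + \rho^- = \sum_i m_i \tau_i$ with $m_i \in \Z$. The hypothesis forces $\rho^- \in -\sigma_1$, and expanding $\rho^-$ in the smooth $\Z$-basis $\{\tau_1, \ldots, \tau_{d-1}, \rho^+\}$ of $N$ requires each $-m_i$ to be a nonnegative integer; hence $m_i \leq 0$ for every wall.

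Fix a maximal cone $\sigma_0 \in \Sigma$ with ray generators $e_1, \ldots, e_d$ forming a $\Z$-basis of $N$, so that every other ray of $\Sigma$ lies in $-\sigma_0$ and has the form $-\sum c_i e_i$ with $c_i \in \Z_{\geq 0}$ and $\gcd(c_i) = 1$. For each $k$, let $\rho^{(k)}$ denote the primitive ray on the opposite side of the wall $\tau_k = \langle e_i : i \neq k \rangle$; the previous paragraph yields $\rho^{(k)} = -e_k - \sum_{l \neq k} a^{(k)}_l e_l$ with $a^{(k)}_l \in \Z_{\geq 0}$. The key combinatorial step is to show $a^{(i)}_j > 0 \Leftrightarrow \rho^{(i)} = \rho^{(j)} \Leftrightarrow a^{(j)}_i > 0$, with each $a^{(i)}_j \in \{0,1\}$. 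Indeed, if $\rho^{(j)} \neq \rho^{(i)}$ then $\rho^{(j)} \notin \tilde\sigma_i := \tau_i + \Q_{\geq 0}\rho^{(i)}$, so by hypothesis $\rho^{(j)} \in -\tilde\sigma_i$; expanding in the generators of $\tilde\sigma_i$ and matching the coefficient of $e_i$ forces $a^{(j)}_i = 0$. The value $a^{(i)}_j = 1$ in the case $\rho^{(i)} = \rho^{(j)}$ then drops out of equating the two expressions for that ray.

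The relation $i \sim j \Leftrightarrow \rho^{(i)} = \rho^{(j)}$ is now an equivalence on $\{1, \ldots, d\}$ with blocks $B_1, \ldots, B_\ell$, and on each block the common value is $\rho^{(B)} = -\sum_{i \in B} e_i$. I would then show that these are the \emph{only} extra rays of $\Sigma$: if $\rho$ is extra and $\rho \neq \rho^{(B)}$ for every block $B$, then $\rho \notin \tilde\sigma_k$ for every $k$, so $\rho \in -\tilde\sigma_k$ by hypothesis, and expanding in the generators of $\tilde\sigma_k$ forces the $k$-th coordinate of $\rho$ to vanish. Running over all $k$ yields $\rho = 0$, a contradiction. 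With the full ray set identified, the subsets $\{e_i : i \in B\} \cup \{\rho^{(B)}\}$ are exactly the primitive collections of $\Sigma$, with primitive relations $\sum_{i \in B} e_i + \rho^{(B)} = 0$; this identifies $\Sigma$ with the product of the fans of $\P^{|B_j|}$ inside the direct sum decomposition $N = \bigoplus_j \langle e_i : i \in B_j \rangle$.

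The main obstacle is the combinatorial equivalence step: this is where the full force of the hypothesis (applied to the maximal cones $\tilde\sigma_i$, not just $\sigma_0$) is brought to bear, and one must juggle integrality, sign, and wall-relation constraints simultaneously to conclude the clean $\{0,1\}$ structure on the $a^{(i)}_j$'s and the symmetry that produces the block partition. Once this is in place, exhaustion of extra rays and identification with the product fan follow by direct computation, together with the standard fact that a smooth complete fan is determined by its primitive collections.
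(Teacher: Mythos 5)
Your proof is correct and follows essentially the same route as the paper's: fix a smooth maximal cone with ray basis $e_1,\ldots,e_d$, use the negativity hypothesis together with unimodularity of the adjacent cones to force every remaining ray to be $-\sum_{i\in B}e_i$ for the blocks $B$ of a partition of $[d]$, and identify the resulting fan with a product of projective-space fans. You supply details the paper leaves implicit (the $\{0,1\}$ values and symmetry of the $a^{(i)}_j$, and the exhaustion of the ray set), but the underlying strategy is the same.
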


\begin{proof}
Consider $e_1,...,e_n$ standard basis vectors which form a maximal face and $\rho = \sum a_ie_i$: 
$$
\begin{bmatrix}
1 & 0 & \hdots & 0 \\
0 & 1 & \hdots & 0 \\
 & & \ddots & \\
 0 & 0 & \hdots & 1 \\
 -\rho_1 & -\rho_2 & \hdots & -\rho_n
\end{bmatrix}.
$$
The collection $\{e_2,...,e_n,\rho\}$ also form a maximal face and, by the negative ray condition, 
$$
\pm 1 = \det \begin{bmatrix}
    e_2 \\ e_3 \\ \vdots \\ \rho
\end{bmatrix} = (-1)^{n+1}(-\rho_1), 
$$
so $\rho_1 = 1$ and any entries that were to appear in the matrix below $\rho_1$ would have to be zero. However, this omission can be done with any of the $e_i$, giving
$$
\begin{bmatrix} 
e_1 \\ e_2 \\ \vdots \\ e_n \\ -\rho_{s_1} \\ \vdots \\ -\rho_{s_l}
\end{bmatrix},
$$
where $s_1 \sqcup ... \sqcup s_l = [n]$. Then, we can regroup $\{e_{i_1}, ..., e_{i_k}, \rho_{s_i}\}$ where $i_j \in s_i$. These collections of rays form a maximal face and are exactly the maximal faces of a product of projective spaces.
\end{proof}

Now we directly compute $\Nef(\P\E)$ when $\E$ is a Kaneyama bundle over $\P^{\nn} = \P^{n_1} \times \P^{n_\ell}$ where $\ell > 1$.  Each $S_{\sigma, i}$ can be written $(-\Bpf(\P^{\nn}, 0) + \Z_{\geq 0}( (0, \ldots, d_i, \ldots, 0, 1)$.  let $\delta_s$ be the minimum of the $d_i$ corresponding to rays from a the $s$-th component of $\P^{\nn} = \P^{n_1}\times \cdots \times \P^{n_\ell}$, then we have:\\

\[\Bpf(\P\E) = \bigcap_{s =1}^\ell (-\Bpf(\P^{\nn}), 0) + \Z_{\geq 0}(0,\ldots, \delta_s, \ldots, 0, 1) = (-\Bpf(\P^{\nn}), 0) + \Z_{\geq 0}(0, \ldots, 0, 1).\]\\

\noindent
The $2$nd equality holds only if $\ell > 1$. The anticanonical class of $\P\E$ is:\\

\[-K_{\P\E} = (\sum_{i =1}^n (d_i -1) \be_i, r) =  (\ldots, (\sum d_i) - n_s, \ldots, \sum n_s).\]\\

\noindent
where $\sum d_i$ is over those rows belonging to the $s$-th component of $\P^{n_1}\times \ldots \times \P^{n_\ell}$.  This class is Nef if and only if $d_i = 1$ for all $i \in \Sigma(1)$, and this class is never ample as long as $\ell > 1$.  In particular, $\P\E$ is Fano only if $\E$ is a Kaneyama bundle over $\P^n$.

\begin{remark}\label{rem-Mori}[Tangent bundles and Mori's Theorem]
The tangent bundle $\T X(\Sigma)$ is a Kaneyama bundle for any smooth toric variety $X(\Sigma)$. The linear ideal $L_\Sigma$ is the ideal of relations which holds among the ray generators of $\Sigma$, and the matrix is the identity matrix. By above, the anticanonical class of $\P\T X(\Sigma)$ is then $-K_{\P\T X(\Sigma)} = (0, d) \in \CL(X(\Sigma))\times \Z$. Notably, the anticanonical class is a multiple of $\O_{\P\T X(\Sigma)}(1) = (0, 1) \in \CL(\P\T X(\Sigma))$.  Recall that a bundle $\E$ is said to be ample if $\O_\E(1)$ is ample.  A consequence of this section is then that the tangent bundle $\T X(\Sigma)$ is ample if and only if $-K_{\P\T X(\Sigma)}$ is ample, which in turn holds if and only if $X(\Sigma) = \P^n$.  This is the toric special case of Mori's proof \cite{Mori} of a conjecture of Hartshorne \cite{Hartshorne-ample}.  For a different combinatorial proof of Mori's theorem in the toric case, see \cite{Wu}. In fact, we prove that the only smooth, projective toric variety with a Nef tangent bundle is a product of projective spaces. 
\end{remark}

\begin{question}
    Let $X$ be a smooth, projective variety with Nef tangent bundle, is $X$ a product of projective spaces?
\end{question}

\subsection{Fano Kaneyama bundles over $\P^n$}

We finish this section by looking at the only remaining case: Kaneyama bundles over $\P^n$.  For rank reasons, the only linear ideal which can work is the hypersurface $L = \langle y_0 + \cdots + y_n \rangle$; so we reduce to the case of Kaneyama bundles.  Here there is still something to say. 

\begin{theorem}
Let $\E_\ba$ be the Kaneyama bundle over $\P^n$ corresponding to the non-negative integers $\ba = \{a_0, \ldots, a_n\}$ with $a_0 \leq \ldots \leq a_n$, then $\Eff(\P\E_\ba) = \Q_{\geq 0}\{(-1, 0), (a_n, 1)\}$, $\Nef(\P\E_\ba) = \Q_{\geq 0}\{(-1, 0), (a_0, 1)\}$, and $\P\E_\ba$ is Fano if and only if $\sum_{i =1}^n (a_i - a_0) \leq (n - a_0)$. 
\end{theorem}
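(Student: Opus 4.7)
The plan is to specialize the general machinery of the preceding sections to the case $X(\Sigma) = \P^n$ with a Kaneyama bundle $\E_\ba$. Since $\CL(\P^n) \cong \Z$ (all toric divisor classes $\be_i$ coincide with the hyperplane class), we have $\CL(\P\E_\ba) \cong \Z \times \Z$, with $\deg(X_i) = (-1, 0)$ for $0 \leq i \leq n$ and $\deg(Y_j) = (a_j, 1)$ for $0 \leq j \leq n$.

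For the effective cone I will directly invoke the formula $\Eff(\P\E) = -\Eff(X(\Sigma)) \times \{0\} + \sum_{j=0}^n \Z_{\geq 0}(\bd_j, 1)$ from Section~\ref{sec-background}. In our coordinates this reads $\Z_{\geq 0}(-1, 0) + \sum_{j=0}^n \Z_{\geq 0}(a_j, 1)$. The ordering $a_0 \leq \cdots \leq a_n$ lets me write $(a_j, 1) = (a_n, 1) + (a_n - a_j)(-1, 0)$ for every $j$, so the effective monoid collapses to $\Z_{\geq 0}\{(-1, 0), (a_n, 1)\}$.

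For the Nef cone I will apply Proposition~\ref{prop-kaneyama-monomial}, which shows that $\E_\ba$ is a monomial bundle and expresses $\Nef(\P\E_\ba)$ as an intersection of the cones $C_{\sigma_k, F_j}$ of Theorem~\ref{thm-nefCI}. The combinatorial core of the argument is to identify, for each maximal face $\sigma_k$ of the fan of $\P^n$ (with $\sigma_k(1) = \{0, \ldots, n\} \setminus \{k\}$), the initial ideal $\In_{\sigma_k} L$. A direct calculation using the diagonal form of $D$ produces a weight $w = \sum_{i \neq k} \bee_i$ in the relative interior of the minimal Gr\"obner face containing the relevant rows, giving $\In_{\sigma_k} L = \langle y_k \rangle$. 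Hence $\MM(\In_{\sigma_k} L)$ has $y_k$ as its unique loop, basis $\{y_0, \ldots, \widehat{y_k}, \ldots, y_n\}$, and maximal non-trivial flats $F_j = \{y_0, \ldots, y_n\} \setminus \{y_j\}$ for $j \neq k$. At $p_{\sigma_k, F_j}$ the only non-vanishing generators of $\RR(\P\E_\ba)$ are $X_k$ and $Y_j$, so $C_{\sigma_k, F_j} = \Q_{\geq 0}\{(-1, 0), (a_j, 1)\}$. Since every index $j$ can be paired with some $k \neq j$, the intersection over all $(k, j)$ is $\bigcap_{j=0}^n \Q_{\geq 0}\{(-1, 0), (a_j, 1)\} = \Q_{\geq 0}\{(-1, 0), (a_0, 1)\}$, using minimality of $a_0$.

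For the Fano condition I will combine the CI anticanonical formula with this Nef description. Writing $-K_{\P\E_\ba} = (\sum_{i=0}^n (a_i - 1), n)$, the bundle is Fano precisely when $-K_{\P\E_\ba}$ lies in the interior of the Nef cone, i.e., $\sum_{i=0}^n (a_i - 1) < n a_0$. Rearranging yields $\sum_{i=1}^n (a_i - a_0) < n + 1 - a_0$, which, because both sides are integers, is equivalent to $\sum_{i=1}^n (a_i - a_0) \leq n - a_0$. I expect the main place for care to be the initial ideal computation $\In_{\sigma_k} L = \langle y_k \rangle$ and the bookkeeping of the shift between the rank $n$ of $\E_\ba$ and the $(n+1)$ rays of $\P^n$ when invoking the anticanonical formula; no genuinely new obstruction is anticipated beyond tracking these conventions.
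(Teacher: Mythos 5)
Your proposal is correct and follows the same route as the paper: the effective and Nef cones are obtained by direct computation from the $\Sym$-degree-$1$ generator degrees together with Proposition \ref{prop-kaneyama-monomial}/Theorem \ref{thm-nefCI}, and the Fano criterion comes from placing $-K_{\P\E_\ba}=(\sum_{i=0}^n(a_i-1),n)$ in the interior of $\Q_{\geq 0}\{(-1,0),(a_0,1)\}$, i.e.\ $\sum(a_i-1)<na_0$, which rearranges by integrality to $\sum_{i=1}^n(a_i-a_0)\leq n-a_0$. The only difference is that you spell out the "direct computations" (the identification $\In_{\sigma_k}L=\langle y_k\rangle$ and the collapse of the generating rays using $a_0\leq\cdots\leq a_n$) that the paper leaves implicit.
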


\begin{proof}
The first two statements are direct computations. The anticanonical class of $\P\E_\ba$ is $(-n-1, 0) + (\sum a_i, n+1) - (0, 1) = (\sum (a_i -1), n)$. For this class to be ample we must have $\sum (a_i -1) < na_0$, or rather \\

\[a_0 -1 \leq \frac{1}{n+1} \sum (a_i -1) < \frac{n}{n+1}a_0.\]\\

For this to be able to hold we must have $a_0 < n+1$. Moreover, if we write $a_i = a_0 + \delta_i$, then this inequality holds exactly when $\sum_{i =1}^n \delta_i < n+1 - a_0$. 
\end{proof}

\bibliographystyle{alpha}
\bibliography{main}

\end{document}